\DeclareMathOperator{\Ric}{Ric}
\DeclareMathOperator{\spt}{spt}
\DeclareMathOperator{\Id}{Id}
\DeclareMathOperator{\vol}{vol}
\newtheorem{theo}{Theorem}[]
\newtheorem{prop}[theo]{Proposition}
\newtheorem{lemme}[theo]{Lemma}
\newtheorem{definition}[theo]{Definition}
\newtheorem{coro}[theo]{Corollary}
\newtheorem{remarque}[theo]{Remark}
\begin{document}

\title[Appearance of stable 
minimal spheres along the Ricci flow]{Appearance of stable 
minimal spheres along the Ricci flow in positive scalar curvature} 
\author{Antoine Song}

\begin{abstract}
We construct examples of spherical space forms $(S^3/\Gamma,g)$ with positive scalar curvature and containing no stable embedded minimal surfaces, such that the following happens along the Ricci flow starting at $(S^3/\Gamma,g)$: a stable embedded minimal two-sphere appears and a non-trivial singularity occurs. We also give in dimension $3$ a general contruction of Type I neckpinching and clarify the relationship between stable spheres and non-trivial Type I singularities of the Ricci flow. Some symmetry assumptions prevent the appearance of stable spheres, and this has consequences on the types of singularities which can occur for metrics with these symmetries.

\end{abstract}

\maketitle

\section*{} 

For quotients of the spheres of dimension $2$ and $3$, endowed with an arbitrary metric, the Ricci flow eventually makes the metric converge to a round metric. In dimension $2$, it was proved by Hamilton \cite{Hamiltonsurface} and B. Chow \cite{Chowsphere} that starting at any metric, the latter evolves smoothly under the Ricci flow until a trivial singularity, where the whole surface disappears at a point and after rescaling becomes asymptotically round. The situation is way more complicated in dimension $3$ because non-trivial singularities can occur. In a serie of papers \cite{Perelman1} \cite{Perelman2} \cite{Perelman3}, Perelman was able to analyse and control the singularities by a surgery process initially proposed by Hamilton, which enables to continue the flow. One simple consequence of this breakthrough is that for quotients of the $3$-sphere, after a finite number of surgeries, the manifold disappears in finite time, and also becomes asymptotically round. From a related point of view, if the initial metric is already known to be round enough, then it becomes even more so during the flow: this is Hamilton's theorem \cite{Hamilton} which states that if a closed $3$-manifold has positive Ricci curvature then this property is preserved and after rescaling the metric converges smoothly to a round metric. Besides, it is well known that the positivity condition $\Ric>0$ prevents the existence of two-sided closed stable minimal surfaces. Hence it is natural to ask if the absence of such stable minimal surfaces will also be preserved along the flow. 

Furthermore the study of stable minimal surfaces in the context of the Ricci flow can be motivated by the attempt to better understand singularity formation. For instance the heuristic picture for the Ricci flow on a $3$-sphere is that a non-degenerate singularity which is non-trivial should be a neckpinching, thus there should be small stable minimal spheres just before the singularity time. To our knowledge, the only rigourously proved examples of initial metrics on the $3$-sphere eventually producing a non-trivial singularity \cite{AngenentKnopf} \cite{AngenentKnopf2}, contain a stable minimal sphere. Thus one might hope to avoid non-trivial singularities if the initial metric does not contain stable minimal surfaces.

It will be enough for us to focus on the case where the scalar curvature is positive. This condition $R>0$ is considerably weaker than $\Ric >0$ but nevertheless conveys an idea of roundness and is preserved along the flow. Notice that if $R>0$, any two-sided oriented closed stable minimal surface is a $2$-sphere. Let us reformulate the two previous questions:
$$$$

\textbf{Q1:} Suppose that $(M,g)$ is a closed oriented $3$-manifold with positive scalar curvature and containing no stable minimal spheres. Can a stable minimal sphere appear along the Ricci flow starting at $(M,g)$?

\textbf{Q2: } Let $(M,g)$ be as in $\textbf{Q1}$. Can a non-trivial singularity occur along the Ricci flow starting at $(M,g)$?
$$$$
It turns out that the answer to both questions is yes and the examples are the object of our main theorem (see Theorems \ref{appearance2} and \ref{appearance3}): 

\begin{theo}
There exists a metric $g$ on $S^3$ with positive scalar curvature such that
\begin{enumerate}
\item  $(S^3,g)$ contains no stable minimal $2$-spheres, 
\item a stable minimal $2$-sphere appears along the Ricci flow starting at $(S^3,g)$,
\item a non-trivial singularity occurs in finite time.  
\end{enumerate}

\end{theo}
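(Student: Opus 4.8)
The plan is to produce such a $g$ among $SO(3)$-invariant metrics on $S^3$, written $g = dx^2 + \psi(x)^2\, g_{S^2}$ on $(-1,1)\times S^2$ with the usual smoothness conditions at the poles $x=\pm 1$. For such metrics the Ricci flow preserves the rotational symmetry and reduces to a single quasilinear parabolic equation for the warping function, of the schematic form $\partial_t\psi = \psi_{ss} - (1-\psi_s^2)/\psi$ in the evolving meridian arclength $s$, and the three requirements become conditions on the profile $\psi$. The rotationally symmetric minimal spheres are exactly the critical levels $\{\psi'(x)=0\}$; each such slice is totally geodesic, so its stability is governed by the sign of $\Ric(\partial_x,\partial_x) = -2\psi''/\psi$ --- a local maximum of $\psi$ giving an \emph{unstable} minimal sphere, a local minimum a \emph{stable} one. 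Combined with the fact recalled above that $R>0$ forces a stable minimal surface to be a sphere, and with a symmetrization (rearrangement) argument ruling out non-symmetric stable minimal spheres, this gives: $(S^3,g)$ has no stable minimal sphere if and only if $\psi$ has no interior local minimum. Thus requirement (1) amounts to taking the initial profile $\psi_0$ \emph{unimodal} (strictly increasing from $0$, one interior maximum, then strictly decreasing to $0$); requirement (2) to the evolved profile acquiring an interior local minimum at some time $t_1>0$; requirement (3) to a non-trivial, e.g. Type I neckpinch, singularity in finite time.

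Next I would shape $\psi_0$ so that its unique interior maximum is a thin, sharp spike whose height is only barely above a long, thin, nearly cylindrical region joining two rounded caps, while keeping $R(g_0)>0$ --- the concavity $\psi_0'' < 0$ at the spike only helps, since it contributes positively to $R$; one may also impose $x\mapsto -x$ symmetry so the spike sits at $x=0$. The mechanism is a race in the $\psi$-equation: at the spike $\partial_t\psi$ picks up the large negative term $-2\psi_0''/\psi_0$ from the sharp concavity (equivalently $\Ric(\partial_x,\partial_x)$ is large there), so the metric contracts far faster at the spike than along the adjacent near-cylinder, and after a short time the spike ``burns down'' below the surrounding level. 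At the first such time $t_1>0$ the profile has an interior local minimum at $x=0$, so a stable minimal $2$-sphere has appeared, which settles (2); since $R>0$ is preserved this surface is indeed a sphere, and the local minimum, once created, persists until the singular time.

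Finally, for (3): after the spike has burned down one is looking at a rotationally symmetric metric whose central region is a genuine neck much thinner than the adjacent caps, which is precisely the configuration for which Angenent and Knopf \cite{AngenentKnopf, AngenentKnopf2} established a finite-time Type I neckpinch; applying their result to $g(t_1)$ --- equivalently, continuing the flow from $g_0$ --- yields a non-trivial singularity in finite time, as opposed to the trivial round collapse of the whole $S^3$. (Alternatively, one stays self-contained and runs explicit sub- and supersolution comparisons for the $\psi$-equation, squeezing the central value of $\psi$ to $0$ while the curvature there blows up at the rate $(T-t)^{-1}$.) The main obstacle is the middle step: one must \emph{simultaneously} keep $\psi_0$ strictly unimodal --- so that there is no stable sphere at time $0$ --- and ensure the flow nevertheless manufactures a bona fide thin neck rather than merely diffusing the spike away, i.e. the competition between the reaction term burning the spike down and the diffusion term flattening it out must be controlled quantitatively. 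The softest route is probably to start from an Angenent--Knopf dumbbell, fill in its neck just past the value at which the profile becomes unimodal, and invoke the stability (openness) of the neckpinch behaviour under perturbations of the initial metric to conclude that the modified, stable-sphere-free metric still neckpinches. The Type I curvature estimates and neck asymptotics in step (3) are technically heavy but are by now standard neckpinch technology once the neck is present.
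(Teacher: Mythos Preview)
Your approach has a fatal obstruction: in the rotationally symmetric class on $S^3$, a stable minimal sphere can \emph{never} appear under the Ricci flow if there was none at the start. As you observe, the $SO(3)$-invariant minimal spheres are exactly the critical levels of $\psi$, local minima being the stable ones. But the derivative $v=\psi_s$ satisfies a scalar quasilinear parabolic equation along the flow, and the Sturm--type zero-counting theorem of Angenent (the reference \cite{Angenentnodal} in the paper) forbids the number of sign changes of $v$ from increasing in time. A unimodal initial profile has exactly one sign change of $\psi_s$; acquiring an interior local minimum would require at least three. So the ``race'' you describe is always lost: diffusion prevents the spike from burning through and creating a new extremum, no matter how sharp the spike or how carefully you tune the parameters. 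Moreover, your symmetrization step --- that any stable sphere must be an $SO(3)$-orbit --- is correct (it is the Jacobi-field argument in the paper's proof of Theorem~\ref{symmetry}), but this only reinforces the obstruction: there is no escape via non-symmetric stable spheres either. The paper in fact records exactly this impossibility as Theorem~\ref{symmetry} and Corollary~\ref{cocoo}(1): for a rotationally symmetric $S^3$ without stable spheres, none can appear and no non-trivial Type~I singularity can occur.

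The paper's construction therefore deliberately leaves the rotationally symmetric world. Its examples are ``thin hooks'': hypersurfaces in $\mathbb{R}^4$ shaped like a tube bent back on itself, carrying only a non-free $S^1$ action (see Remark~\ref{petiteremarque}(1)). The appearance of a stable sphere is forced not by an ODE comparison but by a min-max argument (Lemma~\ref{appears}) exploiting a first-order inequality (Lemma~\ref{properties}(3)) between the Ricci-flow area derivatives at two cross-sections of the hook. The non-trivial singularity is then obtained by further ``twisting'' the bent part so that one neck region is a genuine product and another is a non-product warped metric with strictly longer extinction time (Lemma~\ref{key}); a canonical-neighborhood argument (Lemma~\ref{thin tube}) then forces a neckpinch. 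The Angenent--Knopf analysis you invoke lives entirely inside the symmetric class that the paper shows cannot furnish the example.
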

This shows that, even when $R>0$, the absence of stable spheres at the beginning cannot prevent non-trivial singularities. Actually the appearance of stable geodesics is also true for some $2$-spheres (see Theorem \ref{appearance1}), but of course we cannot impose a curvature positivity condition in that case since it would be preserved by the flow and this would prevent the existence of stable closed geodesics.

Let us point out how Question \textbf{Q1} is related to issues concerning the min-max theory for minimal surfaces. Using the Ricci flow, Marques and Neves proved a $3$-dimensional Toponogov type theorem (see Theorem 1.3 in \cite{MaNe}). In particular they suppose that $\Ric >0$ and by controlling the evolution of a min-max width along the Ricci flow, they are able to produce a small area minimal surface for the initial metric. While in \cite{Antoine} we proved using a different method that this result remains true in the general case $R>0$ without further assumption on the Ricci curvature, it would be desirable to understand to what extent their combination of Ricci flow and min-max theory can be realized when $\Ric$ is not necessarily positive. The reason of this assumption $\Ric>0$ in \cite{MaNe} is twofold. First they are making use of Hamilton's theorem so that they do not have to deal with surgeries. The second and most serious reason for this assumption is the following: as recalled previously, it excludes the existence of two-sided stable minimal surfaces so in particular it enables the construction of optimal sweepouts from a given unstable two-sided minimal surface. This observation has been repeatedly used in recent applications of the min-max theory (\cite{MaNe},\cite{MaNeinfinity}, \cite{Zhou}, \cite{Zhou2}, \cite{KeMaNe}). The examples that we construct to answer $\textbf{Q1}$ suggest that combining min-max theory with the Ricci flow when $R>0$ may not be as natural as in the more restrictive case $\Ric>0$.

After answering Questions \textbf{Q1} and \textbf{Q2}, we clarify the link between small stable spheres and Type I singularites in dimension $3$, without curvature assumptions (see Theorem \ref{spherestypeI}): 

\begin{theo}
If a non-trivial Type I singularity occurs at time $T$, then there are stable immersed minimal spheres with embedded image near time $T$ whose area decreases linearly to zero, and a local converse holds true. 
\end{theo}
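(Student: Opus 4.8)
The plan is to prove the two implications separately; the substance lies in producing genuine stable minimal spheres out of a non-trivial Type I singularity, while the local converse follows from standard curvature estimates for stable minimal surfaces together with the monotonicity formula.

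\emph{From a non-trivial Type I singularity to small stable spheres.} Suppose a Type I singularity forms at time $T$ and is non-trivial, i.e.\ $(M,g(t))$ does not simply contract to a round point as $t\to T$. I would first recall that, at a Type I singular time, suitable parabolic rescalings, based at a well-chosen point $p$ and a sequence $t_i\to T$, subconverge in the pointed $C^\infty$ Cheeger--Gromov sense to a non-flat gradient shrinking Ricci soliton, and that in dimension $3$ the Hamilton--Ivey pinching estimate forces $\Ric\ge 0$ on the limit, so that by the classification of $3$-dimensional shrinking solitons the only possibilities are finite quotients of the round $S^3$ and the round cylinder $S^2(\rho)\times\mathbb R$ together with its quotients. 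Non-triviality rules out the compact case at (at least) one point, so there the blow-up is a round cylinder; concretely, for $t$ close to $T$ there is a neck region $N_t\subset M$ such that $(N_t,(T-t)^{-1}g(t))$ is locally $C^\infty$-close to $S^2(\rho)\times\mathbb R$, with the radius of the cross-sections growing towards both ends of the neck.

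\emph{Transplanting the model cross-section.} In the cylinder $S^2(\rho)\times\mathbb R$ a cross-section $S^2(\rho)\times\{0\}$ is a totally geodesic, hence minimal, embedded $2$-sphere whose Jacobi operator is $-\Delta_{S^2(\rho)}$; this operator is nonnegative with one-dimensional kernel, spanned by the constants, corresponding to translations along the $\mathbb R$-factor. Thus the model cross-section is a \emph{stable} minimal sphere with a single degenerate direction. To carry this over to the actual flow I would, for each $t$ near $T$, run a minimization or min--max argument for spheres in a long finite sub-neck of $N_t$: the ends of such a sub-neck can be taken mean-convex (the cross-sectional radius having a nondegenerate minimum in a neckpinch), so Meeks--Simon--Yau minimization in the isotopy class of a cross-section produces an embedded minimal sphere, while the area and topology bounds forbid it from pinching, losing connectedness, or escaping to the ends; alternatively a Simon--Smith--Colding--De Lellis min--max over spheres sweeping out the sub-neck yields a minimal sphere which, as a min--max limit of spheres in a $3$-manifold, is an immersion with embedded image, and whose stability (rather than index one) is made possible by the translational degeneracy noted above. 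In either case one obtains, for all $t\in(T-\delta,T)$, a stable minimal $2$-sphere in $(M,g(t))$ that is immersed with embedded image, and parabolic scaling gives that its $g(t)$-area is $\Theta(T-t)$: linear decay to zero.

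\emph{The local converse and the main obstacle.} For the converse I would assume that in some fixed open set $U\subset M$ there are, for $t\to T$, stable minimal $2$-spheres $\Sigma_t\subset(U,g(t))$ with $\operatorname{area}_{g(t)}(\Sigma_t)\asymp T-t$, and show that $g(t)$ then develops a non-trivial Type I singularity inside $U$ with cylindrical blow-up. Stability together with the area bound yields, via the curvature estimates for stable minimal surfaces (Schoen; Colding--Minicozzi) and the Gauss equation, $\sup_{\Sigma_t}|A|^2\lesssim (T-t)^{-1}$, and the monotonicity formula --- a minimal sphere of area $\asymp T-t$ cannot sit where $|\mathrm{Rm}|\ll (T-t)^{-1}$ --- forces $\sup_{U'}|\mathrm{Rm}|_{g(t)}\gtrsim (T-t)^{-1}$ on some $U'\Subset U$, so a non-trivial singularity occurs in $U$; to recognize it as Type I with local model $S^2\times\mathbb R$ I would invoke the evolution identity $\tfrac{d}{dt}\,\operatorname{area}(\Sigma_t)=-4\pi-\tfrac12\int_{\Sigma_t}(R+|A|^2)$ along a family of minimal spheres (valid since $R>0$ near $T$), which forces the area to vanish in finite time and, combined with the Type I curvature bound on $U$ and the blow-up analysis of the first part, pins the local model. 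I expect the crux to be the first part: extracting an \emph{honest} minimal sphere --- of sphere topology, with embedded image, and \emph{stable} rather than of index one --- from the merely $C^\infty$-close-to-cylindrical and non-symmetric geometry of the true flow, which forces one to handle the translational degeneracy of the model cross-section and to rule out degeneration of the minimizing or min--max sequence; in the converse the delicate point is the reverse inequality, upgrading the curvature lower bound $\gtrsim (T-t)^{-1}$ coming from small area into a genuine Type I upper bound, which is why one should only expect a local converse.
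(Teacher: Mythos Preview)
Your forward direction is sound in outline but takes a detour the paper avoids. Rather than passing through the classification of three-dimensional shrinking solitons, the paper invokes Ding's geometric characterization of Type I versus Type II directly: at a Type I singularity the high-curvature regions are covered by strong $\epsilon$-necks and $(C,\epsilon)$-caps diffeomorphic to $\mathbb{R}P^3$ minus a ball (no $3$-ball caps, since those are exactly what forces Type II). This immediately yields, for $t$ near $T$, an $\epsilon$-tube with mean-convex ends in which Meeks--Simon--Yau $\gamma$-reduction produces a stable embedded sphere (or $\mathbb{R}P^2$ with stable oriented double cover). Your worry about ``stable rather than index one'' is misplaced for this route: area minimization in an isotopy class always returns a stable surface, and the translational degeneracy of the model cylinder is irrelevant to minimization --- it would only matter for the min--max alternative you float, which is unnecessary and would indeed face the difficulty you describe. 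The linear area bounds then follow because the Type I hypothesis bounds $\max_M R$ by $\bar C/(T-t)$, the least-area stable sphere sits where the scalar curvature is comparable to this maximum (canonical neighborhood structure), and the monotonicity formula together with the standard area/scalar-curvature estimate for stable spheres gives both inequalities.

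The genuine gap is in your converse. You correctly extract a curvature \emph{lower} bound $\gtrsim (T-s_k)^{-1}$ near $S_k$ from the small area and the monotonicity formula, but you never establish the Type I \emph{upper} bound in the stated neighborhood. Your evolution identity for the area of a family of minimal spheres is correct but does not help here: it would show the area must vanish in finite time (which is already assumed), not that nearby curvature is controlled, and in any case the spheres $S_k$ need not vary smoothly in $t$. The paper's mechanism is different and essential: once the area of $S_k$ is small, Schoen's curvature estimates plus the area bound force $S_k$ to lie entirely inside a single canonical neighborhood (a strong $\epsilon$-neck or an $\mathbb{R}P^3$-minus-a-ball cap), where the scalar curvature is comparable at all points; the hypothesis $\mathcal{H}^2(S_k)\geq C'(T-s_k)$ then bounds $\max_{S_k} R$ above by $C''/(T-s_k)$, and Perelman's bounded-curvature-at-bounded-distance (Morgan--Tian, Chapter~10) propagates this bound to the region $\{x:(\max_{S_k}R)\cdot d(x,S_k)^2\leq A\}$. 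Without the canonical-neighborhood input you have no route from ``small stable sphere'' to ``bounded curvature nearby,'' which is exactly the step you flag as delicate but leave open.
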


Moreover, we construct in Proposition \ref{singularity neck} general examples of Type I neckpinching by joining any two closed $3$-manifolds with a sufficiently thin neck, which generalizes in dimension $3$ the rotationally symmetric metrics on $S^{n+1}$ constructed by Angenent and Knopf \cite{AngenentKnopf}.   

Finally, one can ensure that no stable spheres appear if the initial metric is symmetric enough (see Theorem \ref{symmetry}):
\begin{theo} Let $M$ be a closed connected oriented $3$-manifold and a $d$-dimensional Lie group of isometries acting on $M$ such that $d>1$, or $d=1$ and the action is free in that case. Then along the Ricci flow, no stable immersed minimal spheres with embedded image can appear if there were none at the beginning.
\end{theo}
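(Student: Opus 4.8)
The plan is to combine two facts: the Ricci flow is equivariant under isometries, so the Lie group $G$ (write $G^0$ for its identity component) acts by isometries on $(M,g_t)$ for every $t$ in the interval of existence; and a Killing field restricts to a Jacobi field along any minimal surface. Suppose, for contradiction, that $(M,g_0)$ contains no stable minimal sphere but that at some time $t_0>0$ a stable immersed minimal $2$-sphere with embedded image appears; let $\Sigma\subset (M,g_{t_0})$ be its embedded image, which we treat as a stable minimal $2$-sphere (the general case being similar). For $X$ in the Lie algebra $\mathfrak g$, put $u_X:=\langle X,\nu\rangle$ along $\Sigma$. Since the flow of $X$ is by isometries it preserves minimality, so $Lu_X=0$ for the Jacobi operator $L=\Delta_\Sigma+|A|^2+\Ric(\nu,\nu)$. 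Stability says the bottom eigenvalue of $-L$ is nonnegative; hence either $u_X\equiv 0$, or $0$ is the bottom eigenvalue, its eigenspace is one-dimensional ($\Sigma$ being connected), and $u_X$ is a bottom eigenfunction, so nowhere zero on $\Sigma$ by the strong maximum principle. This yields a dichotomy: (a) $u_X\equiv 0$ for all $X\in\mathfrak g$, i.e. every Killing field is tangent to $\Sigma$; or (b) some $X\in\mathfrak g$ has $u_X$ nowhere vanishing.

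In case (b) the flow $(\phi_s)$ of $X$ pushes $\Sigma$ off itself with nowhere-vanishing normal speed; since $(\phi_s)$ generates a one-parameter subgroup, a standard compactness argument shows $\bigcup_s\phi_s(\Sigma)$ is open and closed, hence all of $M$, so $M$ is foliated by pairwise disjoint minimal $2$-spheres. In particular $M$ contains an essential embedded $S^2$ (a leaf cannot bound a ball, by Reeb stability), so $M$ is reducible or $S^2\times S^1$; by the Meeks--Simon--Yau theory $(M,g_0)$ then contains a stable embedded minimal $2$-sphere, contradicting the hypothesis. In case (a) with $d=1$ and the action free, the generator $X$ is a nowhere-zero Killing field tangent to $\Sigma\cong S^2$, contradicting $\chi(S^2)=2\neq 0$. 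So it remains to treat case (a) with $d\geq 2$.

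When $d\geq 2$ and (a) holds, $\Sigma$ is $G^0$-invariant and $G^0$ acts on $(\Sigma,g_{t_0}|_\Sigma)\cong S^2$ by isometries. The kernel of this action is discrete: a nontrivial element of $G^0$ fixing $\Sigma$ pointwise would have identity differential at a point of $\Sigma$, hence be the identity. So the effective action has dimension $d$; as a Lie group acting effectively by isometries on a $2$-sphere has dimension $0$, $1$ or $3$, we get $d=3$, $g_{t_0}|_\Sigma$ round, and $G^0$ transitive on $\Sigma$. The orbit decomposition of this cohomogeneity-one action (the same for all $t$) has $2$-sphere principal orbits and at most two singular orbits, each a fixed point with standard slice representation; hence $M\in\{S^3,\,S^2\times S^1\}$ and, for every $t$, $g_t=\phi(\cdot,t)^2\,dx^2+\psi(\cdot,t)^2\,g_{S^2}$ is rotationally symmetric, with $\Sigma$ one of the orbit spheres $\{x\}\times S^2$. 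If $M=S^2\times S^1$ the hypothesis is vacuous (again $\pi_2\neq0$ and Meeks--Simon--Yau). If $M=S^3$: an orbit sphere $\{x\}\times S^2$ is minimal exactly when $\partial_x\psi(x,t)=0$, and stable exactly when in addition $\partial_x^2\psi(x,t)\geq0$; thus ``no stable minimal sphere at $t=0$'' forces $\psi(\cdot,0)$ to have a unique critical point, a nondegenerate maximum. Since $\partial_x\psi$ solves a linear parabolic equation along rotationally symmetric Ricci flow, Angenent's Sturmian theorem on the zero set of solutions of parabolic equations shows the number of zeros of $\partial_x\psi(\cdot,t)$ does not increase, so $\psi(\cdot,t_0)$ still has a unique critical point, a nondegenerate maximum; hence the only minimal orbit sphere at time $t_0$ is unstable, contradicting the stability of $\Sigma=\{x\}\times S^2$. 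This rules out all cases and finishes the proof.

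I expect the main obstacle to be this last paragraph: extracting the rotationally symmetric normal form from the bare dimension count, and, above all, writing down the parabolic equation for $\partial_x\psi$ and invoking the Sturmian principle carefully enough to forbid the creation of a ``waist'' of $\psi$ — this is also where the precise notion of stability and the treatment of degenerate critical points must be nailed down. By comparison, the equivariance of the Ricci flow, the Jacobi-field dichotomy, and cases (b) and $d=1$ are comparatively routine.
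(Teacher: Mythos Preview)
Your argument follows essentially the same route as the paper's: equivariance of the Ricci flow, the Jacobi-field dichotomy for the normal component of a Killing field on a stable minimal sphere, elimination of the ``transverse'' case (b) via the topology of an $S^2$-fibration (the paper phrases this as $\phi_V(s_0)\cdot S(t_1)$ eventually touching $S(t_1)$ from the other side and coinciding by the maximum principle, yielding $S^2\times S^1$), the $d=1$ free case by the fixed-point/Euler-characteristic obstruction, and finally the rotationally symmetric case handled by Angenent's Sturmian theorem on the zeros of $\partial_x\psi$. Your identification of that last step as the delicate one is exactly right, and the paper likewise just cites \cite[Theorem A]{Angenentnodal} there.

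One small gap: your cohomogeneity-one classification is incomplete. An effective $SO(3)$-action on a closed oriented $3$-manifold with principal orbit $S^2$ can also have \emph{exceptional} orbits $SO(3)/O(2)\cong\mathbb{R}P^2$, not only fixed points; this adds $\mathbb{R}P^3$ and $\mathbb{R}P^3\#\mathbb{R}P^3$ to your list $\{S^3,\,S^2\times S^1\}$ (the paper notes all four possibilities). The fix is easy --- $\mathbb{R}P^3\#\mathbb{R}P^3$ is reducible so Meeks--Simon--Yau again gives a stable sphere at $t=0$, and for $\mathbb{R}P^3$ the same Sturmian argument on the profile function applies --- but the sentence ``each a fixed point with standard slice representation'' should be corrected. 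Also, the $\mathbb{R}P^2$ case you defer as ``similar'' is in fact strictly easier: case (b) is vacuous there because the normal bundle is nonorientable, and the rest goes through since $\chi(\mathbb{R}P^2)\neq 0$ and the isometry group of $\mathbb{R}P^2$ is again $SO(3)$.
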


In fact, there is a non-free $S^1$ action on the examples constructed previously to answer $\textbf{Q1}$. Hence these examples essentially have a maximal amount of symmetry among $3$-manifolds such that new stable spheres appear along the Ricci flow. Since we have seen that stable spheres are linked to non-trivial Type I singularities, we get as a corollary (see Corollary \ref{cocoo}):

\begin{coro}
When $M$ as in the previous theorem is not rotationally symmetric and if a singularity occurs along the Ricci flow, then it is a Type I trivial singularity.
\end{coro}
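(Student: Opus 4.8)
The plan is to read the corollary off from Theorem \ref{symmetry}, Theorem \ref{spherestypeI}, and Perelman's structure theory for three-dimensional singularities. Throughout I keep the standing hypotheses of \textbf{Q1}, so $(M,g)$ has positive scalar curvature and contains no stable minimal spheres, and $M$ carries a $d$-dimensional isometry group as in Theorem \ref{symmetry} and is not rotationally symmetric.

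First I would note that the isometry group is preserved by the Ricci flow, so for every $t$ in the maximal interval $[0,T)$ the metric $g(t)$ is invariant under the same group, acting freely when $d=1$, and $(M,g(t))$ is still not rotationally symmetric. Combined with the absence of stable minimal spheres at $t=0$, Theorem \ref{symmetry} then gives that $(M,g(t))$ contains no stable immersed minimal sphere with embedded image, for every $t\in[0,T)$. If one does not wish to assume the absence of stable spheres at $t=0$, it follows from non-rotational symmetry by the rigidity underlying Theorem \ref{symmetry}: the isometry group cannot leave a stable minimal sphere invariant --- a free $S^1$ does not act on $S^2$, and an invariant sphere would otherwise either lie in the at-most-one-dimensional fixed-point set or be rotated by the group, making $M$ rotationally symmetric near it --- so it would move it in a nontrivial continuous family of stable minimal spheres, which via a Jacobi field and maximum principle argument forces a local warped-product structure and hence, propagating it, global rotational symmetry.

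Next, supposing a singularity occurs at $T<\infty$, I would argue it is trivial. If it were not, then by Perelman's canonical neighborhood theorem there are points $x_i$ with $R(x_i,t_i)\to\infty$ and $t_i\to T$ that are centers of $\varepsilon_i$-necks with $\varepsilon_i\to0$; this is essentially the meaning of non-triviality, the alternative being that the rescaled metric converges to a round $S^3/\Gamma$ and the manifold shrinks to a round point. Choosing the neck region so that its two boundary $2$-spheres are strictly larger in area than its thin core and minimizing area among $2$-spheres isotopic to a cross-section produces, for each large $i$, an embedded minimal $2$-sphere in the interior; being an area minimizer in its isotopy class it is stable. For a Type I singularity this is exactly Theorem \ref{spherestypeI}; for a Type II singularity the same construction applies, since it uses only the presence of $\varepsilon$-necks near $T$, which Perelman's theory provides at any non-trivial singularity (the slow blow-up at a tip, a Bryant soliton, which characterizes Type II, plays no role). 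In either case we obtain a stable immersed minimal $2$-sphere with embedded image at some time $t_i\in(0,T)$, contradicting the first step.

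Therefore the singularity at $T$ is trivial: the manifold disappears at a point and the rescaled metric becomes asymptotically round, which is by definition a Type I singularity, and which moreover forces $M\cong S^3/\Gamma$. Granting Theorems \ref{symmetry} and \ref{spherestypeI}, the corollary is then essentially immediate; the point requiring the most attention is that Theorem \ref{spherestypeI} is stated for Type I singularities, so one must verify --- as above --- that its neck-to-stable-sphere mechanism makes no use of the Type I hypothesis, together with the elementary check that ``non-trivial'' is the negation of ``the rescaled metric converges to a round $S^3/\Gamma$'', which is what makes the dichotomy exhaustive. If instead one wants a self-contained argument that avoids assuming the absence of initial stable spheres, the main obstacle becomes the rigidity of the second paragraph --- turning a nontrivial continuous family of minimal spheres into global rotational symmetry --- which is exactly the geometric heart of Theorem \ref{symmetry} and which I would quote rather than reprove.
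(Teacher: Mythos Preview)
Your proposal has two genuine gaps, and in each place the paper proceeds differently.

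First, the corollary does not carry the hypotheses of \textbf{Q1}; ``as in the previous theorem'' refers only to the group-action assumptions. Your fallback rigidity claim---that a nontrivial continuous family of isometric stable minimal spheres forces global rotational symmetry---is false. What the proof of Theorem~\ref{symmetry} actually yields (and Remark~\ref{petiteremarque}~(2) records) is that a nowhere-vanishing Jacobi field coming from the group action forces $M\cong S^2\times S^1$, foliated by the isometric translates of $S$; the $S^1$ translates the foliation rather than rotating the leaves, so the leaves need not be round and $(M,g)$ need not be rotationally symmetric. This $S^2\times S^1$ possibility must be dealt with, and the paper does so by a different mechanism: once Theorem~\ref{spherestypeI} produces stable spheres $S(t)$ of area tending to zero near $T$, Remark~\ref{petiteremarque}~(2) forces $M$ to be foliated by their isometric translates, and then Remark~\ref{minimum de R} (the scalar curvature on $S(t)$ blows up, with bounded max/min ratio) makes the curvature blow up \emph{everywhere} on $M$. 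Hence the singularity is trivial after all, contradicting the assumption.

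Second, your extension of the neck-to-stable-sphere argument to Type~II is not justified. The proof of Theorem~\ref{spherestypeI} invokes \cite{YDing} precisely to guarantee that the high-curvature canonical neighborhoods are $\epsilon$-necks or $(C,\epsilon)$-caps diffeomorphic to $\mathbb{R}P^3$ minus a ball; this is exactly the content of the Type~I hypothesis there. At a Type~II singularity one has $(C,\epsilon)$-caps diffeomorphic to a $3$-ball, modeled on a Bryant-type region with monotone cross-sectional area, and your area minimization can collapse to the tip rather than produce a stable sphere. The paper rules out Type~II by Lemma~\ref{gluck} instead: the ball-cap carries a group action equivalent to a linear one, hence with a fixed point; this already contradicts freeness when $d=1$, and when $d>1$ it forces $G_0\cong SO(3)$ and thus rotational symmetry.
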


The paper is organized as follows. In Section 1, some preliminaries on the Ricci flow and min-max theory are presented. Section 2 constitutes the main part of the article, certain "thin hooks" are constructed, and this enables to give explicit examples answering at the same time \textbf{Q1} and \textbf{Q2}. In Section 3, after showing a general procedure to get Type I neckpinching, we prove a relation between stable spheres and non-trivial Type I singularities. Finally, we propose in Section 4 symmetry assumptions preventing the appearance of new stable spheres and a corollary concerning singularities which can possibly occur is derived.

\subsection*{Acknowledgement} 
I am grateful to my advisor Fernando Cod{\'a} Marques for his support and his helpful remarks. I also thank Ian Agol for sharing his former thoughts about these questions related to the study of singularities, and Otis Chodosh, John Lott, Richard Bamler for their interest.

\section{Preliminaries}

\subsection{Ricci flow, singularities, canonical neighborhoods and geometric limits} \label{RF}

Let $(M,g)$ be a closed oriented Riemannian $3$-manifold. A standard Ricci flow starting at $(M,g)$, defined on $[0,T)$, is a smooth solution of 
$$
\left\{ \begin{array}{rcl}
&\frac{\partial}{\partial t} g(t)  = -2\Ric_{g(t)}, \quad t\in[0,T)\\
& g(0) = g,
\end{array}\right.$$
The flow is said to develop a singularity at time $T$ if the norm of the curvature tensor goes to infinity as $t\to T$. The singularity is said to be 
\begin{itemize}
\item \textit{trivial} when 
$$\{x\in M ; \lim_{t\to T}|Rm(x,t)|=\infty\} = M,$$
\item a \textit{Type I} singularity when there is a constant $\bar{C}$ such that for all $t\in [0,T)$,
$$\sup_M|Rm_{g(t)}|\leq \frac{\bar{C}}{T-t},$$
\item a \textit{Type II} singularity when 
$$\limsup_{t\uparrow T} \sup_M |Rm_{g(t)}| (T-t)= \infty.$$
\end{itemize}

As Perelman showed, the regions where the scalar curvature is large are modelled by the so-called canonical neighborhoods. To explain their properties, we will use \cite{MorganTian} (for other references, see also \cite{K&L}, \cite{CaoZhu}). Recall the definition of $(C,\epsilon)$-canonical neighborhoods. Fix $C$ and $\epsilon$ two positive constants. An open neighborhood $U$ of $x\in (M,g(t))$ is a strong $(C,\epsilon)$-canonical neighborhood if one of the following holds (see \cite[Section 8 in Chapter 9 and Definition 14.18]{MorganTian}):
\begin{enumerate}
\item $U$ is a strong $\epsilon$-neck in $(M,g)$ centered at $x$,
\item $U$ is a $(C,\epsilon)$-cap in $(M,g)$ whose core contains $x$,
\item $U$ is a $C$-component of $(M,g)$ satisfying Condition (8) of \cite[Definition 9.72]{MorganTian},
\item $U$ is an $\epsilon$-round component of $(M,g)$.
\end{enumerate}
A strong $\epsilon$-neck centered at $x\in (M,g(t))$ is a submanifold $N\subset M$ and a diffeomorphism $\bar{\psi}_N : S^2\times (-1/\epsilon,1/\epsilon)\to N$ such that $t-R(x,t)\geq 0$ and the evolving metric $R(x,t)\bar{\psi}^*(g(t+s/R(x,t)))$, $-1< s \leq 0$, is $\epsilon$-close in the $C^{[1/\epsilon]}$-topology to the evolving cylindrical metric $ds^2+d\theta)^2$, $-1< s \leq 0$, where $d\theta^2$ denotes the round metric of scalar curvature $1/(1-s)$ on $S^2$. A $(C,\epsilon)$-cap is a noncompact submanifold $\mathcal{C} \subset M$ diffeomorphic to a $3$-ball or to $\mathbb{R}P^3$ minus a ball, with a neck $N\subset \mathcal{C}$ such that $\bar{Y}=\mathcal{C}\backslash N$ is a compact submanifold called core. The boundary $\partial \bar{Y}$ of the so-called core (the interior of $\mathcal{C}\backslash N$) is required to be the central sphere of a strong $\epsilon$-neck in $\mathcal{C}$. After rescaling the metric to have $R(x)=1$ at some point $x$ in the cap, the diameter, volume and scalar curvature ratios at any two points are bounded by $C$. A $C$-component is a compact manifold diffeomorphic to $S^3$ or $\mathbb{R}P^3$, of positive sectional curvature and of bounded geometry controlled by $C$ after rescaling (we added Condition (8) of \cite[Definition 9.72]{MorganTian} since the next theorem is actually true with this definition). An $\epsilon$-round component is a compact connected manifold such that, after rescaling to make $R(x)=1$ at some point $x\in M$, the metric is close in the $C^{[1/\epsilon]}$-topology to a round metric. The definition of strong $(C,\epsilon)$-canonical neighborhoods is hence scale invariant. 

We say that a (standard) Ricci flow $(M,g(t))_{t\in[a,b)}$ satisfies the $(C,\epsilon)$-canonical neighborhood assumption with parameter $r$ if every point $(x,t)\in M\times [a,b)$ with $R_{g(t)}(x) \geq r^{-2}$ has a $(C,\epsilon)$-canonical neighborhood. When $\epsilon$ and $1/C$ are small enough, one has the following canonical neighborhood theorem (see for instance \cite[Chapter 9, Chapter 17 and Theorem 15.9]{MorganTian}):

\begin{theo}  \label{canoneighbo}
Let $T>0$. Then there exists an $r_0>0$ depending only on $T$ such that the following holds. 
Suppose that $(M,g)$ is a closed oriented $3$-manifold endowed with a normalized metric, i.e. for all $x\in M$:
$$\max_{M}|Rm(x,0)|_{g(0)}\leq 1 \text{ and } \vol_{g(0)} B(x,0,1) \geq \omega/2,$$
where $\omega$ is the volume of the unit ball in $\mathbb{R}^3$.
Assume that the Ricci flow $(M_t,g(t))_{t\in[0,t_1)}$ is well-defined until a time $t_1\leq T$. Then $(M_t,g(t))_{t\in[0,t_1)}$ satisfies the strong $(C,\epsilon)$-canonical neighborhood assumption with parameter $r_0$. 
\end{theo}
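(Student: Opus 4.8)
The plan is to prove the statement by contradiction, via a parabolic rescaling (blow-up) argument following Perelman. Suppose that for some fixed $T>0$ no such $r_0$ exists. Then for every $k\geq 1$ there is a closed oriented $3$-manifold with a normalized metric, a Ricci flow $(M_k,g_k(t))_{t\in[0,t_1^k)}$ with $t_1^k\leq T$, and a spacetime point $(x_k,t_k)$ with $Q_k:=R_{g_k(t_k)}(x_k)\to\infty$ such that $(x_k,t_k)$ has no strong $(C,\epsilon)$-canonical neighborhood. Among all such ``bad'' points one first carries out a point-selection: for each $k$ one chooses $(x_k,t_k)$ so that its curvature is, in a suitable quantitative sense, almost minimal among bad points in an earlier and nearby parabolic region. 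This is the standard inductive argument (see Morgan--Tian, Chapter~9, or Kleiner--Lott) and it guarantees that, after rescaling by $Q_k$ and recentering at $(x_k,t_k)$, the curvature of the rescaled flows $\tilde g_k(s):=Q_k\,g_k\bigl(t_k+s/Q_k\bigr)$ stays uniformly bounded on parabolic neighborhoods of $(x_k,0)$ whose spatial and backward-time radii tend to infinity with $k$.

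Next one collects the inputs that make the rescaled sequence precompact. First, Perelman's $\kappa$-noncollapsing theorem on scales $\leq\sqrt{T}$: since $t_1^k\leq T$ and the initial metrics are normalized in the sense of the statement, the flows are $\kappa$-noncollapsed for a uniform $\kappa=\kappa(T)>0$, and this property is scale invariant so it passes to the rescalings. Second, the Hamilton--Ivey pinching estimate, which in dimension $3$ forces any blow-up limit to have nonnegative curvature operator. Combining the curvature bounds from the point-selection with $\kappa$-noncollapsing, Hamilton's compactness theorem for Ricci flows produces a subsequential pointed limit $(M_\infty,g_\infty(s),x_\infty)$ which is a complete, $\kappa$-noncollapsed, ancient Ricci flow with bounded nonnegative curvature operator and $R_{g_\infty}(x_\infty,0)=1$; that is, a $3$-dimensional ancient $\kappa$-solution.

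Now one invokes Perelman's structure theorem for $3$-dimensional ancient $\kappa$-solutions: once $\epsilon$ and $1/C$ are fixed small enough (depending only on universal constants), every point of such a solution has a strong $(C,\epsilon)$-canonical neighborhood, namely it is the center of a strong $\epsilon$-neck, lies in the core of a $(C,\epsilon)$-cap, or its time slice is a $C$-component or an $\epsilon$-round component. In particular $(x_\infty,0)$ has such a neighborhood. Since the notion of strong $(C,\epsilon)$-canonical neighborhood is scale invariant and is stable under $C^{[1/\epsilon]}$-close convergence of the evolving metrics, for all large $k$ the point $(x_k,t_k)$ inherits a strong $(C,\epsilon)$-canonical neighborhood in $(M_k,g_k(t_k))$, contradicting the choice of $(x_k,t_k)$. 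This contradiction establishes the existence of $r_0=r_0(T)$.

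The main obstacle is the final structural input: the classification of $3$-dimensional ancient $\kappa$-solutions and the proof that each of their points lies in a canonical neighborhood is itself one of the deepest components of Perelman's work, relying on the splitting/dimension-reduction for such solutions and on the asymptotic shrinking soliton analysis. A secondary technical point is making the point-selection argument deliver curvature control on parabolic balls whose radii grow to infinity in both space and backward time simultaneously, which requires ruling out earlier high-curvature regions; this is handled by induction on the curvature scale together with the a priori derivative estimates for the Ricci flow. All of these facts are contained in the cited references, so for the present paper the statement may be taken as known.
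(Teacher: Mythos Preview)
The paper does not prove this theorem at all; it is stated as a known result with a citation to \cite[Chapter 9, Chapter 17 and Theorem 15.9]{MorganTian}. Your sketch is a correct outline of the standard Perelman argument found in those references (point selection, $\kappa$-noncollapsing from the normalized initial data, Hamilton--Ivey pinching, Hamilton compactness, and the canonical neighborhood structure of ancient $\kappa$-solutions), and you yourself note in the final paragraph that the statement may be taken as known for the purposes of this paper --- which is exactly what the author does.
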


The relationship between the Type I/II classification and the canonical neighborhoods was given in \cite{YDing}: a singularity at time $T$ is of Type II if and only if there is a sequence $(x_k,t_k)$ with $x_k\in M$, $t_k\to T$ such that the scalar curvature at $(x_k,t_k)$ goes to infinity and $(x_k,t_k)$ is contained in a $(C,\epsilon)$-cap diffeomorphic to a $3$-ball (it corresponds to $iv)$ in \cite[Proposition 1.4]{YDing}). This geometric characterization of Type II singularities will be useful. We note that the other kind of $(C,\epsilon)$-caps, those diffeomorphic to $\mathbb{R}P^3$ minus a point, have a double cover which is a strong $\epsilon$-neck.

The scalar curvature evolves according to
$$\frac{\partial R}{\partial t} = \Delta R + 2|\Ric|^2.$$
Thus when $x$ is in a strong $\epsilon$-neck at time $t$ or a $(C,\epsilon)$-cap diffeomorphic to $\mathbb{R}P^3$ minus a point, there is a positive constant $C_1$ so that 
\begin{equation} \label{neck derivative}
\frac{1}{C_1} R(x,t)^2 \leq \frac{\partial R(x,t)}{\partial t} \leq C_1 R(x,t)^2.
\end{equation}
and there is a positive constant $C_2$ such that whenever $x$ is in a strong canonical neighborhood,
\begin{equation} \label{derivative estimate}
|\frac{\partial R(x,t)}{\partial t}| \leq C_2 R(x,t)^2.
\end{equation}

Along the Ricci flow, as the scalar curvature gets large it controls the whole curvature tensor. Let $(M,g(t))_{t\in[0,T)}$ be a Ricci flow such that for all $x\in M$ the smallest eigenvalue of $Rm(x,0)$, denoted by $\nu(x,0)$, is at least $-1$. Set $X(x,t) = \max(-\nu(x,t),0)$. Then Ivey \cite{Iveypinching} and Hamilton \cite{Hamiltonpinching} showed the following "pinching towards positive" property:
\begin{theo}
We have the following properties.
\begin{enumerate}
\item$ R(x,t)\geq \frac{-6}{4t+1}$ and
\item for all $(x,t)$ for which $0<X(x,t)$,
$$R(x,t)\geq 2X(x,t)(\log X(x,t)+ \log(1+t) -3).$$
\end{enumerate}
\end{theo}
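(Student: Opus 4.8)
The plan is to obtain both inequalities from maximum principles: statement (1) from the scalar evolution equation, and statement (2) from Hamilton's maximum principle for the full curvature operator. For (1), I would use that in dimension $3$ one always has $|\Ric|^2 \geq \frac{1}{3}R^2$ by Cauchy--Schwarz, so that the evolution equation $\frac{\partial R}{\partial t} = \Delta R + 2|\Ric|^2$ gives $\frac{\partial R}{\partial t} \geq \Delta R + \frac{2}{3}R^2$. The hypothesis $\nu(\cdot,0)\geq -1$ forces $R(\cdot,0) = 2(\lambda+\mu+\nu)\geq -6$, where $\lambda\geq\mu\geq\nu$ are the ordered eigenvalues of the curvature operator, so comparing $\min_M R(\cdot,t)$ with the solution $\phi(t) = -6/(4t+1)$ of $\phi' = \frac{2}{3}\phi^2$, $\phi(0)=-6$ (which increases toward $0$, so data lying above it stay above it), the scalar maximum principle yields $R(x,t)\geq -6/(4t+1)$.

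For (2), I would work with the curvature operator $Rm$ acting on $\Lambda^2$, which under the Ricci flow satisfies $\frac{\partial}{\partial t}Rm = \Delta Rm + Rm^2 + Rm^{\#}$, and whose reaction term acts on the ordered eigenvalues by the ODE system $\frac{d\lambda}{dt}=\lambda^2+\mu\nu$, $\frac{d\mu}{dt}=\mu^2+\lambda\nu$, $\frac{d\nu}{dt}=\nu^2+\lambda\mu$. Hamilton's maximum principle for systems says that a subset of the bundle of algebraic curvature operators which is closed, fiberwise convex, invariant under parallel transport, and invariant under this (possibly time-dependent) ODE, is preserved by the PDE. The strategy is to take, for each $t$, the fiberwise set $\mathcal{K}(t)$ cut out by $\nu\geq -1$ together with $\lambda+\mu+\nu \geq X(\log X + \log(1+t) - 3)$ for $X:=\max(-\nu,0)>0$, to show this family is preserved, and to read off (2) from $R=2(\lambda+\mu+\nu)$.

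Two properties of $\mathcal{K}(t)$ must be verified. Convexity: since $g(X):=X(\log X-3)$ satisfies $g''(X)=1/X>0$ for $X>0$, the function $(\lambda,\mu,\nu)\mapsto(\lambda+\mu+\nu)-g(-\nu)-\log(1+t)\,(-\nu)$ is concave, so $\mathcal{K}(t)$ is an intersection of a superlevel set of a concave function with the half-space $\{\nu\geq -1\}$, hence convex in eigenvalue space; one then invokes the standard fact that an $O(3)$-invariant set that is convex and monotone in the ordered eigenvalues is convex in the space of operators. ODE-invariance: at a point of $\partial\mathcal{K}(t)$ where the pinching inequality is an equality (and likewise at $\nu=-1$) one must check that the time-derivative of the defining function along the ODE is nonnegative, which reduces to an inequality relating $\lambda,\mu,\nu$ that uses $\frac{d}{dt}(\lambda+\mu+\nu)\geq\frac{1}{2}(\lambda+\mu+\nu)^2+\frac{1}{2}\nu^2$ together with the fact that inside $\mathcal{K}(t)$ the trace $R$ and $-\nu$ are comparable up to the logarithmic factor, so that $\lambda$ and $\mu$ cannot be too negative.

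The main obstacle is precisely this ODE-invariance computation: matching the constants so that the term $\log(1+t)$ exactly absorbs the explicit time dependence and so that the estimate closes up for all $t\geq 0$ requires the careful case analysis carried out in Ivey \cite{Iveypinching} and Hamilton \cite{Hamiltonpinching}. The remaining ingredients --- the evolution equation for $Rm$, the vector-valued maximum principle, and the convexity lemma for $O(3)$-invariant sets --- are standard, so in practice one simply cites these two references.
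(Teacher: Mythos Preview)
The paper does not prove this theorem at all: it is stated in the preliminaries as a quoted result of Ivey and Hamilton, with no argument given beyond the citations \cite{Iveypinching}, \cite{Hamiltonpinching}. Your proposal therefore goes strictly further than the paper, supplying a correct outline of the standard proof (scalar maximum principle for (1), Hamilton's tensor maximum principle applied to a time-dependent convex pinching set for (2)) before arriving at the same endpoint, namely citing those two references.

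One small inaccuracy in your sketch: the preserved family $\mathcal{K}(t)$ is not cut out by the time-independent condition $\nu\geq -1$; that half-space is not invariant under the reaction ODE (at $\nu=-1$ one has $\dot\nu=1+\lambda\mu$, which can be negative). The actual preserved set uses instead the time-dependent trace bound $\lambda+\mu+\nu\geq -3/(1+t)$ (equivalently $R\geq -6/(1+4t)$, which is part (1)) together with the logarithmic pinching inequality. Since you explicitly defer the ODE-invariance computation and the precise constants to Ivey and Hamilton, this does not affect the validity of your proposal.
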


The notion of geometric convergence \cite[Chapter 5]{MorganTian} describes the convergence of based Ricci flows, and can be extended to any time interval (i.e. to intervals not of the form $(-T,0]$). We will need the following convergence property of the Ricci flow. 

\begin{lemme} \label{geomconv}
Let $T>0$, let $(M_k,g_k)$ be a sequence of closed normalized $3$-manifolds. Suppose that for any sequence of points $\mathfrak{s} =\{x_k\}_k$ where $x_k \in M_k$, the following holds. Subsequentely the sequence of based manifolds $(M_k,g_k,x_k)$ converges geometrically to a complete based manifold $(M^{\mathfrak{s}}_\infty, g^{\mathfrak{s}}_\infty,x_\infty)$ such that 
\begin{enumerate}
\item the Ricci flow $(M^{\mathfrak{s}}_\infty, g^{\mathfrak{s}}_\infty(t))$ with initial metric $g^{\mathfrak{s}}_\infty(0)=g^{\mathfrak{s}}_\infty$ exists, is unique, and defined for $0\leq t <T$,
\item for all $t<T$ there is a constant $C_0=C_0(t)$ independent of the sequence $\mathfrak{s}$ so that the norm of the curvature tensor of $(M^\mathfrak{s}_\infty, g_\infty^{\mathfrak{s}}(t'))$ is bounded by $C_0$ for all $t'\leq t$.
\end{enumerate}

Then for any sequence $\mathfrak{s} =\{x_k\}$ with $x_k\in M_k$, the sequence of based Ricci flows $(M_k,g_k(t),x_k)$ starting at $g_k(0)=g_k$ subsequently converges geometrically to $(M^{\mathfrak{s}}_\infty, g^{\mathfrak{s}}_\infty(t),x_\infty)$ on $[0,T)$.
\end{lemme}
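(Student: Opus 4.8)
The plan is to run a continuity‑in‑time argument: I would reduce the lemma to the single quantitative fact that the curvature of the flows $g_k$ stays uniformly bounded on $M_k\times[0,t]$ for every $t<T$, and then extract the desired geometric convergence from Hamilton's compactness theorem. The one genuinely new ingredient, needed to globalize the local curvature control packaged in hypothesis (2), will be Perelman's pseudolocality theorem. Set
$$\tau:=\sup\Big\{\tau'\in(0,T]\ :\ g_k\text{ is defined on }[0,\tau']\text{ for every }k,\ \text{and}\ \sup_k\,\sup_{M_k\times[0,\tau']}|Rm_{g_k}|<\infty\Big\}.$$
Since the $g_k$ are normalized, the doubling‑time estimate for $\max|Rm|$ provides a common interval $[0,\delta]$ on which all $g_k$ are defined with curvature $\le 2$, so $\tau\ge\delta>0$. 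Granting $\tau=T$ for the moment, the lemma follows from a standard argument: for $\tau'<T$ the uniform curvature bound together with Perelman's no‑local‑collapsing theorem \cite{Perelman1} (the $g_k$ flow from normalized metrics and $\tau'<T$, so they are $\kappa$‑noncollapsed below a scale $\rho$, with $\kappa,\rho$ depending only on $T$) gives uniform lower injectivity radius bounds, whence Hamilton's Cheeger--Gromov compactness theorem yields a subsequence of $(M_k,g_k(t),x_k)$ converging geometrically on $[0,\tau']$; its time‑zero slice is $(M_\infty^{\mathfrak{s}},g_\infty^{\mathfrak{s}},x_\infty)$ by uniqueness of geometric limits, and by the uniqueness of the Ricci flow with initial metric $g_\infty^{\mathfrak{s}}$ (hypothesis (1)) the limit flow must be $g_\infty^{\mathfrak{s}}(t)$ on $[0,\tau']$; a diagonal argument over $\tau'\uparrow T$ concludes.

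The heart of the matter is thus to prove $\tau=T$. Suppose not. If $\sup_k\sup_{M_k\times[0,\tau)}|Rm_{g_k}|<\infty$, restarting the doubling‑time estimate from a time just below $\tau$ extends the uniform curvature bound to $[0,\tau+\eta)$ for some $\eta>0$, contradicting the definition of $\tau$; so $\sup_k\sup_{M_k\times[0,\tau)}|Rm_{g_k}|=\infty$, while $B(t_0):=\sup_k\sup_{M_k\times[0,t_0]}|Rm_{g_k}|<\infty$ for each $t_0<\tau$. After passing to a subsequence there are then $x_k\in M_k$ and $t_k<\tau$ with $Q_k:=|Rm_{g_k}(x_k,t_k)|\to\infty$, and since $Q_k\le B(t_*)$ whenever $t_k\le t_*<\tau$, necessarily $t_k\to\tau$. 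I would now feed the sequence $\mathfrak{s}=\{x_k\}$ into the hypothesis: after a further subsequence, $(M_k,g_k,x_k)\to(M_\infty^{\mathfrak{s}},g_\infty^{\mathfrak{s}},x_\infty)$ geometrically, the flow $g_\infty^{\mathfrak{s}}(t)$ exists on $[0,T)$, and $|Rm_{g_\infty^{\mathfrak{s}}}|\le C^*:=C_0(\tau)$ on $[0,\tau]$; moreover, by the same reasoning as in the first paragraph but using only that $B(t_0)<\infty$ for $t_0<\tau$, after a further subsequence $(M_k,g_k(t),x_k)\to(M_\infty^{\mathfrak{s}},g_\infty^{\mathfrak{s}}(t),x_\infty)$ in $C^\infty_{\mathrm{loc}}$ on $M_\infty^{\mathfrak{s}}\times[0,\tau)$.

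The contradiction will come from pseudolocality \cite{Perelman1} \cite{MorganTian}. As a geometric limit of the $g_k$, the flow $g_\infty^{\mathfrak{s}}$ is $\kappa$‑noncollapsed below $\rho$; combined with $|Rm_{g_\infty^{\mathfrak{s}}}|\le C^*$ on $[0,\tau]$ this yields a uniform injectivity radius bound and hence, by volume comparison, a scale $r_*=r_*(C^*,\kappa)>0$ --- independent of time and of the sequence --- such that for every $t_0\le\tau$ the ball $B_{g_\infty^{\mathfrak{s}}(t_0)}(x_\infty,r_*)$ has almost Euclidean volume. By the $C^\infty_{\mathrm{loc}}$ convergence at time $t_0$, the ball $B_{g_k(t_0)}(x_k,r_*/2)$ also has almost Euclidean volume for all large $k$ (depending on $t_0$), and since $g_k$ normalized forces $R_{g_k}\ge -6$ by the Ivey--Hamilton pinching estimate stated above, the hypotheses of the pseudolocality theorem hold for $g_k$ at $x_k$ at time $t_0$. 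Applying it to $g_k$ restarted at $t_0$ gives $|Rm_{g_k}(x_k,t)|\le\alpha_0/(t-t_0)$ for $t\in(t_0,\,t_0+(\epsilon_0 r_*/2)^2]$, with universal constants $\alpha_0,\epsilon_0$; choosing $t_0:=\tau-\tfrac34(\epsilon_0 r_*/2)^2$ (after shrinking $r_*$ so that $t_0>0$) and letting $k\to\infty$ so that $t_k\in(\tau-\tfrac12(\epsilon_0 r_*/2)^2,\tau)$ gives $|Rm_{g_k}(x_k,t_k)|\le 4\alpha_0(\epsilon_0 r_*/2)^{-2}$, contradicting $Q_k\to\infty$. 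Hence $\tau=T$.

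The step I expect to be the real obstacle is this final one: the convergence $(M_k,g_k(t),x_k)\to(M_\infty^{\mathfrak{s}},g_\infty^{\mathfrak{s}}(t),x_\infty)$ is available only on the half‑open interval $[0,\tau)$, so it gives no control on $|Rm_{g_k}|$ at precisely the times where curvature is concentrating; pseudolocality --- applicable exactly because hypothesis (2) supplies a curvature bound $C_0$ uniform over all base‑point sequences, hence a uniform almost‑Euclidean scale for the limits --- is what converts the tameness of the limiting flow $g_\infty^{\mathfrak{s}}$ near time $\tau$ into an a priori curvature bound for $g_k$ near $x_k$ slightly beyond $\tau$. Everything else (the doubling‑time estimate, no‑local‑collapsing, Hamilton's compactness, uniqueness of geometric limits and of the Ricci flow, and the nested‑subsequence bookkeeping) is routine.
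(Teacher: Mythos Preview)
Your argument is correct, but the key step differs from the paper's. Both proofs define the same maximal time $\tau$ of uniform curvature control, show $\tau>0$ via the doubling--time estimate, obtain geometric convergence on $[0,\tau)$ by Hamilton compactness, and then argue by contradiction that $\tau=T$. The divergence is in how one rules out curvature blow--up near $\tau$. You invoke Perelman's pseudolocality: the limit flow has bounded curvature $C_0(\tau)$ and is noncollapsed, hence is almost Euclidean at a fixed small scale $r_*$ at every time $t_0<\tau$; transferring this to $g_k(t_0)$ near $x_k$ via the convergence on $[0,\tau)$ and running pseudolocality forward bounds $|Rm_{g_k}(x_k,t_k)|$. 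The paper instead exploits the $3$--dimensional canonical neighborhood structure (Theorem~\ref{canoneighbo}) together with the derivative estimate $|\partial_t R|\le C_2 R^2$ valid in canonical neighborhoods: if $R(x_k,t_k)$ is huge with $t_k\to\tau$, this estimate forces $R(x_k,\tau-\delta)$ to already exceed any prescribed $C'$ for a fixed $\delta=\delta(C')>0$, which contradicts the convergence at time $\tau-\delta$ to a limit with curvature bounded by $C_0(\tau)$. The paper's route is shorter and uses only tools already assembled in Section~\ref{RF}, but is genuinely three--dimensional; your pseudolocality argument is dimension--free and self--contained, at the cost of heavier input. One small technical point: as written you apply pseudolocality on $[t_0,t_0+(\epsilon_0 r_*/2)^2]$, an interval that may extend past $\tau$ where $g_k$ need not exist; this is harmless, since you can instead apply it on $[t_0,t_k]$ with the correspondingly smaller radius $r_0=\epsilon_0^{-1}\sqrt{t_k-t_0}\in(r_*/4,r_*/2)$, at which scale the almost--Euclidean volume hypothesis still holds by the same convergence argument.
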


\begin{proof}
Define
$$\tau = \sup\{t\in [0,T]; \exists C(t)>0, \forall t'\in [0,t],  \limsup_{k} \max_{M_k}|Rm(.,t')| \leq C(t)\}.$$
By (7.4a) and (7.4b) in \cite{ChowKnopf} and the argument in \cite[Lemma 6.1]{ChowLuNi}, we check that $\tau$ is positive and that for any integer $m$, there is a positive time $t_m$ for which $|\nabla^j Rm|$ ($0\leq j \leq m$) are bounded on $[0,t_m]$ uniformly in $k$. It follows from Shi's derivative estimates (see \cite[Chapter 6]{ChowLuNi} and \cite[Chapter 5]{MorganTian} for instance) that for all $\mathfrak{s} =\{x_k\}$ with $x_k\in M_k$, $(M_k,g_k(t),x_k)$ subsequently converges geometrically to $(M^{\mathfrak{s}}_\infty, g^{\mathfrak{s}}_\infty(t),x_\infty)$ on $[0,\tau)$ because of the first item in the assumptions. Hence it remains to show $\tau = T$. Suppose by contradiction that $\tau <T$ then, by Theorem \ref{canoneighbo} and (\ref{derivative estimate}), for all $C'>0$ there is a $\delta>0$ such that there are subsequences $M_{k(l)}$ and $x_l\in M_{k(l)}$ with the following property: the curvature at $(x_l,\tau-\delta)$ in $(M_{k(l)}, g_{k(l)}(\tau-\delta))$ has norm larger than $C'$. But by the geometric convergence on $[0,\tau)$ that was just explained and the second item in the assumptions, it is absurd when $C'>C_0(\tau) $. Thus $\tau=T$ and the lemma is proved.

\end{proof}

\subsection{Some min-max theory}

In this subsection, we present a variation of the min-max theorem in the continuous setting as described by De Lellis and Tasnady in \cite{Tasnady}.

Let $(M^{n+1},g)$ be a closed Riemannian manifold. In what follows, the topological boundary of a subset of $M$ will be denoted by $\partial$. Consider two open subsets $X$ and $N$ of $M$ possibly with a smooth boundaries, such that $(X\cup\partial X) \subset N$. The notation for the $m$-dimensional Hausdorff measure will be $\mathcal{H}^m$. Take $a<b$, $k\in \mathbb{N}$. 

\begin{definition} \label{definition}
A family of $\mathcal{H}^n$-measurable closed subsets $\{\Gamma_t\}_{tÊ\in [a,b]^k}$ in $N$ with finite $\mathcal{H}^n$-measure is called a generalized smooth family if
\begin{itemize}
\item for each $t $ there is a finite subset $P_t \subset N$ such that $\Gamma_t\cap N$ is a smooth hypersurface in $N\backslash P_t$,
\item $t \mapsto \mathcal{H}^n(\Gamma_t)$ is continuous and $t \mapsto \Gamma_t$ is continous in the Hausdorff topology,
\item $\Gamma_t \to \Gamma_{t_0}$ smoothly in any compact $U\subset\subset N\backslash P_{t_0}$ as $t\to t_0$. 
\end{itemize}
A generalized smooth family $\{\Sigma_t\}_{t\in [a,b]}$ is called a continuous sweepout in $N$ associated to $X$ if there exists a family of open subsets $\{\Omega_t\}_{t\in [a,b]}$ of $N$ such that
\begin{enumerate} [label=(\roman*)]
\item $(\Omega_t \cup \partial \Omega_t)\subset N$ for all $t\in[a,b]$,
\item $(\Sigma_t\backslash \partial\Omega_t) \subset P_t$ for any $t\in[a,b]$,
\item $\mathcal{H}^{n+1}(\Omega_t\backslash \Omega_s) + \mathcal{H}^{n+1}(\Omega_s\backslash \Omega_t) \to 0$, as $s\to t\in [a,b]$,
\item $\Omega_a =X$, and $\Omega_b= \varnothing$.
\end{enumerate}

\end{definition}

Still following \cite{Tasnady}, we define a notion of homotopy equivalence:

\begin{definition} \label{homotopic}
Two continuous sweepouts associated with $X$, $\{\Sigma_t^1\}_{t\in [a,b]}$ and $\{\Sigma_t^2\}_{t\in [a,b]}$, are homotopic if: 
\begin{itemize}
\item there is a generalized smooth family $\{\Gamma_{(s,t)}\}_{(s,t)\in [a,b]^2}$, such that $\Gamma_{(a,t)} =\Sigma_t^1$ and $\Gamma_{(b,t)} =\Sigma_t^2$ for all $t\in[a,b]$, 
\item $\Gamma_{(s,t)} \subset N$ for $t\in [a,b]$ and there exists a small $\alpha>0$ such that $\Gamma_{(s,t)}=\Gamma_{(a,t)}$ for $(s,t)\in [a,b]\times [a,a+\alpha]$. 
\end{itemize}

A family $\Lambda$ of continuous sweepouts in $N$ associated to $X$ is said to be homotopically closed if it contains the homotopy class of each of its element.

\end{definition}

If $\Lambda$ is a homotopically closed family of continuous sweepouts in $N$ associated with $X$, the width of $\Lambda$ in $N$ is defined as the min-max quantity
$$W(N,\partial N, \Lambda) = \inf_{\{\Sigma_t\}\in \Lambda} \max_t \mathcal{H}^n(\Sigma_t).$$
A sequence $\{\{\Sigma_t^k\}_{t\in [a,b]}\}_{k\in \mathbb{N}} \subset \Lambda$ is called a minimizing sequence if 
$$\max_t \mathcal{H}^n(\Sigma_t^k) \to W(N,\partial N,\Lambda)  \text{  as } k\to \infty.$$
A sequence of slices $\{\Sigma_{t_k}^k\}_{k \in \mathbb{N}}$ is called a min-max sequence if $$\mathcal{H}^n(\Sigma_{t_k}^k) \to W(N,\partial N, \Lambda)  \text{  as } k\to \infty.$$

The following theorem is a slight extension of \cite[Theorem 2.7]{Zhou}. It roughly says that if the beginning of sweepouts belonging to a homotopically closed family $\Lambda$ has $n$-volume less than the width of $\Lambda$, then the min-max theorem still holds as long as all the sweepouts are contained in an open set with mean convex boundary. Note that "mean convex" can be generalized to "piecewise smooth mean convex" (see \cite{Antoine}).
 
\begin{theo}  \label{two boundaries}

Let $(M,g)$ be a closed $(n+1)$-manifold with $2\leq n \leq 6$, and $N$, $X$ open subsets of $M$. Suppose that $\partial X\neq \varnothing$ and that $(X\cup \partial X) \subset N$. When $\partial N \neq \varnothing$, assume that $\partial N$ is mean convex. Then for any homotopically closed family $\Lambda$ of sweepouts in $N$ associated with $X$ such that 
$$W(N,\partial N,\Lambda)>\mathcal{H}^{n}(\partial X),$$ 
there exists a min-max sequence $\{\Sigma_{t_n}^n\}$ of $\Lambda$ converging in the varifold sense to an embedded minimal hypersurface $\Sigma$ (possibly disconnected), contained in $N$. Moreover the $n$-volume of $\Sigma$, if counted with multiplicities, is equal to $W(N,\partial N,\Lambda)$. 

\end{theo}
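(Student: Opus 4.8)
The plan is to adapt the proof of \cite[Theorem 2.7]{Zhou} (itself built on the De Lellis--Tasnady continuous min-max scheme \cite{Tasnady}) to the situation where the ambient set $N$ has nonempty mean convex boundary and where sweepouts are only required to start at the region $X$ rather than to sweep out all of $M$. The first step is to set up the pull-tight procedure: given a minimizing sequence $\{\Sigma^k_t\}\subset\Lambda$, one deforms it, via an isotopy generated by a suitable vector field supported away from $\partial N$, so that every slice whose $\mathcal{H}^n$-measure is close to $W(N,\partial N,\Lambda)$ is close in the varifold sense to the set of stationary varifolds in $N$; the key point is that this deformation can be chosen to fix a neighborhood of $\partial N$ and of the beginning slice $\Sigma_a$, so the deformed family stays in $\Lambda$ (using that $\Lambda$ is homotopically closed) and the hypothesis $W(N,\partial N,\Lambda)>\mathcal{H}^n(\partial X)$ guarantees the near-maximal slices are genuinely in the interior, not degenerating onto $\partial X=\partial\Omega_a$.

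Next, one runs the combinatorial argument of Almgren--Pitts as in \cite{Tasnady,Zhou}: from a pulled-tight minimizing sequence one extracts a min-max sequence $\{\Sigma^n_{t_n}\}$ converging as varifolds to a stationary integral varifold $\Sigma$ in $N$ with mass $W(N,\partial N,\Lambda)$, and then one establishes the crucial \emph{almost minimizing} property of this sequence in small annuli contained in $N$. Here the only new feature compared to the closed case is that some of these annuli may touch $\partial N$; this is precisely where mean convexity of $\partial N$ enters, via the maximum principle / the fact that area-decreasing deformations can be pushed off $\partial N$ so that no regularity or compactness is lost near the boundary. One then invokes the interior regularity theory for almost minimizing varifolds (Pitts, Schoen--Simon), together with the boundary maximum principle of White, to conclude that $\Sigma$ is a smooth embedded minimal hypersurface in $N$, possibly with multiplicities and possibly disconnected, and that it does not touch $\partial N$ unless it coincides with a component of $\partial N$, which is ruled out (or absorbed) by mean convexity.

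I expect the main obstacle to be the boundary analysis: verifying that the pull-tight and almost-minimizing constructions can be carried out compatibly with the constraint ``$\Sigma_t$ and all competitors stay inside $N$,'' i.e.\ that one never needs to move mass across the mean convex barrier $\partial N$. Concretely, one must check that the comparison surfaces used to test almost minimizality in annuli meeting $\partial N$ can be replaced by surfaces lying in $\overline N$ with no greater area --- this is a standard consequence of mean convexity (one projects onto $\overline N$, strictly decreasing area near $\partial N$), but it has to be threaded through the continuous-sweepout formalism of \cite{Tasnady}, including the Hausdorff-continuity and the $\mathcal{H}^{n+1}$-continuity of the associated open sets $\Omega_t$. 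The remark that ``mean convex'' may be weakened to ``piecewise smooth mean convex'' (as in \cite{Antoine}) indicates that the argument is local near $\partial N$ and only uses the sign of the mean curvature, so once the boundary push-off is in place the rest of the proof is a routine transcription of \cite[Theorem 2.7]{Zhou}. Finally, the equality of $\mathcal{H}^n(\Sigma)$ (with multiplicity) with the width follows, as usual, from lower semicontinuity of mass under varifold convergence together with the min-max characterization of $W(N,\partial N,\Lambda)$.
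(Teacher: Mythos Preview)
Your outline is broadly correct and rests on the same ingredients as the paper (the De Lellis--Tasnady scheme, the pull-tight of \cite[Proposition 4.1]{C&DL}, and the adaptation in \cite[Theorem 2.7]{Zhou}), but your handling of the two new constraints differs from the paper's in a way worth noting.

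For the boundary $\partial N$: you propose to keep the pull-tight vector field supported \emph{away} from $\partial N$ and then deal with annuli meeting $\partial N$ later, via maximum principles and area-decreasing projections onto $\overline{N}$ during the almost-minimizing and regularity steps. The paper instead follows \cite{MaNe} and constructs a vector field $\mathbf{V}$ supported \emph{near} $\partial N$ whose flow is area-decreasing (this is exactly where mean convexity is used), and composes it with the pull-tight isotopies so that every slice of the tightened minimizing sequence with area above $\mathcal{H}^n(\partial X)+\epsilon$ lies at uniformly positive distance from $\partial N$. After this, the combinatorial, almost-minimizing and regularity steps run verbatim in the interior with no boundary analysis at all. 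Your route can be made to work, but it requires threading boundary considerations through every later stage; the paper's device localizes the boundary issue to a single modification of the pull-tight.

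For the constraint that the deformed sweepouts remain in $\Lambda$ (in particular satisfy the ``fixed near $t=a$'' clause in Definition~\ref{homotopic}): you say the deformation should fix a neighborhood of the beginning slice $\Sigma_a$. The paper implements this more concretely by restricting the pull-tight construction to the set $\mathfrak{X}$ of varifolds with mass in $[\mathcal{H}^n(\partial X)+\epsilon,\,4W(N,\partial N,\Lambda)]$ and forcing the isotopy $\Psi_V$ to be the identity whenever the mass of $V$ is below $\mathcal{H}^n(\partial X)+\epsilon/2$. Since $t\mapsto \mathcal{H}^n(\Sigma_t)$ is continuous and $\Sigma_a=\partial X$, this automatically freezes the sweepout near $t=a$, and the hypothesis $W(N,\partial N,\Lambda)>\mathcal{H}^n(\partial X)$ is what makes this mass window nonempty. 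Your description is compatible with this, but the mass-threshold formulation is what one actually checks.
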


\begin{proof}

We essentially reproduce the proof of \cite[Theorem 2.7]{Zhou}. Recall that the latter is an application to higher dimensions of an idea in \cite{MaNe}, where the authors construct a vector field $\mathbf{V}$ in $N$ whose support is contained in a small neighborhood of $\partial N$ so that the corresponding flow is area decreasing. Thanks to this flow, they show that Proposition 4.1 in \cite{C&DL} still holds. What we modify here is that, in the proof of this proposition, we restrict ourselves to the set $\mathfrak{X}$ of varifolds whose mass is bounded above by $4W(N,\partial N,\Lambda)$ and also bounded below by $\mathcal{H}^{n}(\partial X) + \epsilon$, where $0< \epsilon <W(N,\partial N,\Lambda)-\mathcal{H}^{n}(\partial X)$. More precisely, let $\mathcal{V}_\infty$ be the set of stationary varifolds contained in $\mathfrak{X}$. By chosing a sufficiently fine locally finite covering of $\mathfrak{X}\backslash \mathcal{V}_\infty$, we construct for each varifold $V$ of mass less than $4W(N,\partial N,\Lambda)$ an ambient isotopy $\{\Psi_V(s,.)\}_{s\in[0,1]}$ satisfying the properties listed in Step 3 of the proof of \cite[Proposition 4.1]{C&DL} if $V\in\mathfrak{X}$ but such that $\Psi_V(s,.)=\Id$ for all $s\in[0,1]$ if the mass of $V$ is less than $\mathcal{H}^{n}(\partial X) + \epsilon/2$. Finally, by modifying $\{\Psi_V(s,.)\}_{s\in[0,1]}$ with the vector field $\mathbf{V}$ if necessary, we can deform a minimizing sequence $\{\{\Sigma^k_t\}_{t\in[0,1]}\}$ into an other minimizing sequence $\{\{\tilde{\Sigma}^k_t\}_{t\in[0,1]}\}$ such that all $\tilde{\Sigma}^k_t$ with area larger than $\mathcal{H}^{n}(\partial X) + \epsilon$ lie at bounded distance from $\partial N$. Then the end of the proof remains unchanged compared to \cite{Zhou}.
\end{proof}

\begin{remarque} \label{dimension 1}
If $n=1$, the following elementary version of Theorem \ref{two boundaries} will be useful. Suppose that $N$ and $X$ are diffeomorphic to the unit disk $D$ in $\mathbb{R}^2$, define $\{c_t\}_{ t\in[0,1]}$ as the smooth sweepout of $X$ obtained by the foliation $\{x\in\mathbb{R}^2 ; ||x||_{eucl}=t \}_{t\in[0,1]}$ of $D$, where $||.||_{eucl}$ is the Euclidean norm in $\mathbb{R}^2$. Let $\mathcal{C}$ be the space of smooth curves endowed with the $C^\infty$ topology. Let $\Lambda$ be the homotopically closed family of sweepouts $\{\tilde{c}_t\}_{ t\in[0,1]} \subset \mathcal{C}$ continuously isotope to $\{c_t\}_{ t\in[0,1]}$ in $N$ and such that $\tilde{c}_0=c_0$. Define $W(N,\partial N, \Lambda)$ as for the higher dimensional case. If $\partial N$ is convex and 
$$W(N,\partial N,\Lambda)>\mathcal{H}^{1}(\partial X),$$ 
then there is a simple closed geodesic in $N$ of length $W(N,\partial N,\Lambda)$. This can be proved using the mean curvature flow $\{\Phi(s,.)\}$ where $s$ is the time parameter. Define $\theta: \mathbb{R}\times\mathcal{C}\to \mathbb{R}$ such that 
\begin{align*}
\theta(s,c)=  \sup\{& s'\in[0,s] ; \Phi(s',c) \text{ has length} \\ & \text{ at least } \frac{1}{2}(W(N,\partial N,\Lambda)+\mathcal{H}^{1}(\partial X))\},
\end{align*}
where we use the convention $\sup \varnothing =0$.
Then given a minimizing sequence of sweepouts $\{\{c^n_t\}_{t\in[0,1]}\}$, we consider the new tightened sequence $\{\{\Phi({\theta(s,c^n_t)},c^n_t)\}_{t\in[0,1]}\}$ for each $s\geq0$. By the maximum principle, the new sweepouts are entirely contained in $N$. Letting $s\to \infty$, any min-max sequence converges subsequently to a simple closed geodesic inside $N$ (see \cite{Grayson}).

\end{remarque}

\section{Appearance of stable spheres and non-trivial singularities} \label{appear1}

\subsection{Construction of thin hooks}


We will construct a family of $(n+1)$-dimensional closed manifolds by defining embedded hypersurfaces in $\mathbb{R}^{n+2}$ and using the metric induced by the Euclidean metric. As we will see, they look like hook-shaped $(n+1)$-spheres, whose one branch is slightly swollen. The properties of these hooks will be useful to prove the two appearance theorems stated in the next subsection.

Consider a curve $\mu :[0,1]\to \mathbb{R}^2$ such that 
$$\mu(s) = \left\{ \begin{array}{rcl}
(1,s) & \mbox{for} 
& s\in [0,1/6],\\
(\cos(s\pi),1/6+\sin(s\pi)) & \mbox{for}
& s \in [1/3,2/3],\\
(-1,1-s) & \mbox{for} 
& s \in [5/6,1],
\end{array}\right.$$
and $\mu$ is chosen on $[1/6,1/3]\cup[2/3,5/6]$ so that it is a smooth curve.
For all integer $L>0$, consider the smooth curve $\gamma_L:[0,4]\to \mathbb{R}^2$ defined by
$$\gamma_L(s)=\left\{ \begin{array}{rcl}
(1,(s-1)L-1)& \mbox{for}
& s\in [0,1),\\
(1,s-2)& \mbox{for}
& s\in [1,2),\\
 \mu(s-2) & \mbox{for} & s\in [2,3],\\
(-1,(3-s)L) & \mbox{for} & s\in [3,4].
\end{array}\right. $$ 
It will be convenient to introduce the following function $f:[-1/2,1/2]\to \mathbb{R}$:
$$f(x)=\exp(1+\frac{1}{4x^2-1}) \quad \forall x\in[-1/2,1/2].$$ 
Let $d_0>0$ be smaller than half the focal radius of the curve $\mu$. Now, we identify $\mathbb{R}^2$ with $\mathbb{R}^2\times\{0\}$ in $\mathbb{R}^{n+2}$. At each point $p\in \gamma_L$, denote by $H[p]$ the normal hyperplane to $\gamma_L$ at $p$. For all $L>1$ and $\bar{\epsilon}=(\epsilon_1,\epsilon_2)\in(0,1/4)^2$, we choose a function $\phi[{L,\bar{\epsilon}}]: [0,4]\to \mathbb{R}^+$ such that:
\begin{itemize} 
\item $\phi[{L,\bar{\epsilon}}](s) = 1+\epsilon_1 f(s-1.5) \quad \text{for } s\in[1+\epsilon_2 , 2-\epsilon_2],$
\item $\phi[{L,\bar{\epsilon}}](1/L)=\phi[{L,\bar{\epsilon}}](4-1/L)=1$,
\item $\phi[{L,\bar{\epsilon}}]$ is increasing on $[0,1.5]$ and decreasing on $[1.5,4]$,
\item $\phi[{L,\bar{\epsilon}}]$ is strictly concave on $[0,1]$.
\end{itemize}
Define $$\Gamma[{L},{\bar{\epsilon}}] = \{x\in H[\gamma_L(s)]; s\in [0,4] , d(x,\gamma_L(s))=d_0\phi[{L,\bar{\epsilon}}](s)\}.$$ We can further impose that $\phi[{L,\bar{\epsilon}}]$ satisfies the following properties:
\begin{itemize}  
\item $\Gamma[{L,\bar{\epsilon}}]$ is a smooth closed hypersurface,
\item for all $L>1$ and $\epsilon_1\in(0,1/4)$ being fixed, $\Gamma[{L,\bar{\epsilon}}]$ converges smoothly to a hypersurface $\Gamma[{L,(\epsilon_1,0)}]$ and $\phi[{L,\bar{\epsilon}}]$ converges uniformly to a function $\phi[{L,(\epsilon_1,0)}]$ when $\epsilon_2\to 0$, 
\item the domains $\Delta[{L,\epsilon_1}] := \Gamma[{L,(\epsilon_1,0)}]\cap\{(x_1,...,x_{n+2}) ; x_2 < -L\} $ are all isometric to each other for $L>1$ and $\epsilon_1\in (0,1/4)$, and they have positive sectional curvature.  
\end{itemize}

In the following lemma, we list some useful properties of the manifold $\Gamma[{L,\bar{\epsilon}}]$ for any $n\geq1$:

\begin{lemme} \label{properties}
\begin{enumerate}
\item If $n>1$ and if $d_0$, $\epsilon_1$, $\epsilon_2$ are small enough, $\Gamma[{L,\bar{\epsilon}}]$ has (arbitrarily large) positive scalar curvature bounded below by a positive constant independent of $L$.

\item The manifold $\Gamma[{L,\bar{\epsilon}}]$ has positive sectional curvature on 
$$\Gamma[{L,\bar{\epsilon}}] \cap \{(x_1,...,x_{n+2}) ; x_1>0 \text{ and } x_2<-1\},$$
and on the open neighborhood of 
$$ \Gamma[{L,\bar{\epsilon}}] \cap \{ (x_1,...,x_{n+2}); x_1>0 \text{ and } x_2=-1/2\}$$
consisting of all points in $\Gamma[{L,\bar{\epsilon}}]$ at distance less than $\tilde{\delta}>0$ from the above set, where $\tilde{\delta}$ is independent of $L$ and $\bar{\epsilon}$.

\item Let $Z^a= \{x\in H[{\gamma_{L}(a)}] ; d(x,\gamma_{L}(a))=d_0\}$ and consider $Z^2$, $Z^{2.5}$ as hypersurfaces in $\Gamma[{L,(\epsilon_1,0)}]$. Then 
$$-\int_{Z^{2.5}} (R-\Ric(\nu,\nu)) > -\int_{Z^{2}} (R-\Ric(\nu,\nu)), $$
where $\nu$ denote a unit normal on these hypersurfaces and $R$ (resp. $\Ric$) is the scalar curvature of $\Gamma[{L,(\epsilon_1,0)}]$ (resp. its Ricci curvature) endowed with the metric induced by $\mathbb{R}^{n+2}$. 

\end{enumerate}
\end{lemme}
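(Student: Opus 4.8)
The plan is to establish the three items essentially by reducing everything to explicit curvature computations for hypersurfaces of revolution around the base curve $\gamma_L$, exploiting the product-like structure away from the bent region. I would set up coordinates adapted to $\gamma_L$: near a point $\gamma_L(s)$ on a straight segment, $\Gamma[L,\bar\epsilon]$ looks like a tube $\{d(x,\gamma_L(s)) = d_0\phi(s)\}$, which is a (generalized) surface of revolution whose profile is governed by the function $r(s) = d_0\phi[L,\bar\epsilon](s)$. The principal curvatures of such a tube are the ``longitudinal'' one, coming from the variation of $r$ along $s$ (roughly $-r''/(1+(r')^2)^{3/2}$ type terms, plus curvature-of-$\gamma_L$ corrections on the bent part $\mu$), and the $(n)$-fold ``meridional'' one, equal to $1/r$ up to lower-order terms. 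From these the sectional curvatures, Ricci and scalar curvature are computed by the Gauss equation for a hypersurface in Euclidean space.

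For item (1): on the cylindrical region the scalar curvature is dominated by the meridional contribution, which is of order $n(n-1)/r^2 \sim n(n-1)/d_0^2$ (large if $d_0$ is small), and the cross terms between longitudinal and meridional principal curvatures contribute $\sim 2n\kappa_{\text{long}}/r$; since $\kappa_{\text{long}}$ is controlled by $\|\phi'\|, \|\phi''\|$ and by the curvature of $\mu$ (all bounded independently of $L$ once $\epsilon_1,\epsilon_2$ are fixed small, and in fact the problematic region is $L$-independent by construction), choosing $d_0$ small makes the $1/d_0^2$ term swamp everything, giving $R \geq c(n,d_0) > 0$ with $c$ independent of $L$. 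On the far ends, where $\phi \equiv 1$ and the profile transitions to close the hypersurface off into caps, one uses the stipulated ``$\Delta[L,\epsilon_1]$ has positive sectional curvature and is $L$-independent'' clause to conclude directly. The only point requiring care is that $n>1$: for $n=1$ the meridional term $n(n-1)/r^2$ vanishes, which is exactly why item (1) is stated only for $n>1$.

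For item (2): positivity of all sectional curvatures on the region $\{x_1>0,\ x_2<-1\}$ means on (most of) the lower straight branch where $\phi$ is \emph{strictly concave} (the condition ``$\phi$ strictly concave on $[0,1]$''): there $\kappa_{\text{long}} = -r''/(1+(r')^2)^{3/2} > 0$, and both types of sectional curvatures --- the plane tangent to a meridian sphere (value $\kappa_{\text{long}}\cdot\kappa_{\text{merid}} + (\text{meridional}^2)$-type, all positive) and the plane spanned by two meridional directions (value $\kappa_{\text{merid}}^2 > 0$) --- are positive by the Gauss equation. Near the slice $\{x_1>0,\ x_2=-1/2\}$, which sits inside the bent part $\mu$ where $\gamma_L$ itself is curving (it is the near-semicircular arc), the geodesic curvature of $\mu$ together with the fact that $\phi$ is still away from its maximum gives an open $\tilde\delta$-neighborhood of positive sectional curvature; $\tilde\delta$ is $L$-independent because this whole region is built from the fixed curve $\mu$ and fixed $\phi$-data, not from the $L$-dependent stretching. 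This is mostly bookkeeping with the Gauss equation once the coordinates are fixed.

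For item (3): on $\Gamma[L,(\epsilon_1,0)]$ the hypersurfaces $Z^a$ are meridian round spheres of radius $d_0\phi(a)$, and $R - \Ric(\nu,\nu)$ with $\nu$ the tangent-to-$\gamma_L$ direction is, by the contracted Gauss equation, the scalar curvature of the \emph{induced} metric on $Z^a$ plus second-fundamental-form terms; since $Z^a$ is a round $n$-sphere of radius $d_0\phi(a)$ this intrinsic piece is $n(n-1)/(d_0\phi(a))^2$, and integrating over $Z^a$ (whose volume is $\propto (d_0\phi(a))^n$) yields something like $\propto (d_0\phi(a))^{n-2}$ times a positive constant, minus correction terms involving $\phi'(a), \phi''(a)$ and the curvature of $\gamma_L$ at $a$. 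At $s=2$ we are at the junction where $\gamma_L$ is straight ($\mu(0)=(1,0)$, $\mu'(0)$ vertical) and $\phi$ is near its plateau region, while at $s=2.5$ we are at the top of the arc $\mu$ where $\gamma_L$ has substantial geodesic curvature but $\phi(2.5)$ is smaller (the arc is the part of $\mu$ where the second coordinate is largest, so by the monotonicity ``$\phi$ decreasing on $[1.5,4]$'' we have $\phi(2.5) < \phi(2)$, wait --- one must check the precise values, but the freedom in choosing $\phi$ on $[1/6,1/3]\cup[2/3,5/6]\leftrightarrow$ here the relevant $s$-shift makes $\phi(2),\phi(2.5)$ adjustable). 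The strict inequality $-\int_{Z^{2.5}}(R-\Ric(\nu,\nu)) > -\int_{Z^{2}}(R-\Ric(\nu,\nu))$ should then follow because the dominant term $\propto \phi(a)^{n-2}$ is monotone in $\phi(a)$ (for $n>2$; for $n=2$ it is constant and one must use the next-order term, and for $n=1$ one uses that the curvature of $\mu$ makes the only contribution) together with the sign of the correction from $\gamma_L$'s curvature being helpful at $s=2.5$. I would verify this by writing $-\int_{Z^a}(R-\Ric(\nu,\nu))$ as an explicit elementary function of $d_0\phi(a)$, $\phi'(a)$, $\phi''(a)$ and the curvature $\kappa_{\gamma_L}(a)$ and checking the inequality term by term.

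The main obstacle I anticipate is item (3): it is a delicate \emph{strict} inequality between two integrated curvature quantities at two specific points, so unlike items (1)--(2) (which are robust ``make $d_0$ small'' or ``$\phi$ concave, hence positive'' arguments) it genuinely needs the exact form of the Gauss-equation expression for $R - \Ric(\nu,\nu)$ on a tube and a careful comparison of the values $\phi(2)$ versus $\phi(2.5)$ together with the geodesic curvature of the arc $\mu$ at its apex --- and it must come out with the stated sign for every relevant $n\geq 1$, which is why the freedom in choosing $\phi$ on the transition intervals (and the focal-radius smallness of $d_0$) has been built into the construction. A secondary subtlety is making all constants ($c(n,d_0)$ in (1), $\tilde\delta$ in (2)) genuinely independent of $L$, which I would handle by isolating the $L$-dependence entirely into the two long straight segments $s\in[0,1)$ and $s\in[3,4]$ where $\phi\equiv 1$ except near the caps, so that on those segments the tube is a flat-profile cylinder (sectional curvatures $0$ and $1/d_0^2 > 0$, hence $R>0$ for $n>1$) and all the non-trivial geometry lives in the fixed, $L$-independent region $s\in[1,3]$.
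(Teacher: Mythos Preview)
Your approach to items (1) and (2) is essentially the paper's: a limiting/rescaling argument to $S^n\times\mathbb{R}$ for (1), and concavity of $\phi$ for (2). One geographic slip in (2): the slice $\{x_1>0,\ x_2=-1/2\}$ does \emph{not} sit on the bent arc $\mu$ (whose image lies entirely in $\{x_2\geq 0\}$); it is on the straight segment $\gamma_L(s)=(1,s-2)$ at $s=1.5$, precisely at the peak of the bump in $\phi$. So the positivity there comes from $\phi''(1.5)<0$ on a straight axis, not from the curvature of $\mu$. This is a minor fix.

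Item (3), however, has a genuine gap. You base your comparison on $\phi(2)\neq\phi(2.5)$ and on the resulting difference in the radii of $Z^2$ and $Z^{2.5}$, proposing to compare leading terms $\propto (d_0\phi(a))^{n-2}$. But in the limit $\epsilon_2=0$ one has $\phi(2)=1$ (the bump $1+\epsilon_1 f(s-1.5)$ vanishes at the endpoint $s=2$ since $f(\pm 1/2)=0$), and together with $\phi(4-1/L)=1$ and the monotonicity $\phi$ decreasing on $[1.5,4]$ this forces $\phi\equiv 1$ on $[2,4-1/L]$. So $\phi(2)=\phi(2.5)=1$, both spheres have radius exactly $d_0$, and all of your $\phi$-, $\phi'$-, $\phi''$-dependent terms coincide at the two slices. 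Your proposed mechanism produces no inequality at all.

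The actual source of the strict inequality is the curvature of $\gamma_L$ alone: near $Z^2$ the curve is straight, near $Z^{2.5}$ it is at the apex of the arc. The paper exploits this by writing the metric locally as a warped product with base the round $S^n(d_0)$ and one-dimensional fiber, warping function $f_w$ on $S^n$. Then $R-\Ric(\nu,\nu)=\frac{n(n-1)}{d_0^2}-\frac{\Delta f_w}{f_w}$; integrating over the (identical) base spheres and using $\int_{S^n}\frac{\Delta f_w}{f_w}=\int_{S^n}\frac{|\nabla f_w|^2}{f_w^2}$, the first term cancels and the second is zero at $Z^2$ (where $f_w$ is constant) but strictly positive at $Z^{2.5}$ (where $f_w$ is non-constant because the axis is bending). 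This gives the inequality uniformly in $n\geq 1$, with no case analysis and no reliance on the freedom in $\phi$.
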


\begin{proof}
When $\epsilon_2\to 0$, $\Gamma[{L,\bar{\epsilon}}]$ converges to $\Gamma[{L,(\epsilon_1,0)}]$, and when $\epsilon_1\to 0$, $\Gamma[{L,(\epsilon_1,0)}]$ converges to a manifold called $\Gamma_L$. Hence to prove point $(1)$, it is enough to show that for $d_0$ small enough, $\Gamma_L$ has arbitrarily large positive scalar curvature. 
Since the scalar curvature on $\Gamma_L \cap \{(x_1,...,x_{n+2}) ; x_2<0\}$ is positive and arbitrarily large as $d_0$ goes to $0$, we only have to study
$$\Gamma_L \cap \{(x_1,...,x_{n+2}) ; x_2\geq 0\}.$$ 
But the desired property is clear since when $d_0$ goes to zero, the above subset converges after rescaling to a subset of a neck $S^n\times \mathbb{R}$ endowed with the product of a round metric and the standard metric on $\mathbb{R}$.


Point $(2)$ follows readily from the concavity of the function $\phi[{L,\bar{\epsilon}}]$ at the corresponding values.

The last point can be checked by computing the curvature for warped products, see \cite[Chapter 7, Corollary 43]{O'Neill} for instance (which holds for one-dimensional fibers). Indeed locally around $Z^2$ and $Z^{2.5}$, the metric is a warped product metric with base a round $n$-sphere of sectional curvature $d_0^{-2}$ and with fiber $[0,1]$. Let $f_w>0$ be the warping function for $Z^{2.5}$, the warping function for $Z^2$ being constant. On one hand
$$\int_{Z^2}(R-\Ric(\nu,\nu)) =\int_{Z^2} \frac{n(n-1)}{d_0^2},$$
on the other hand
\begin{align*}
\int_{Z^{2.5}}(R-\Ric(\nu,\nu)) & = \int_{Z^{2.5}}( \frac{n(n-1)}{d_0^2} -\frac{\Delta f_w}{f_w}) \\ & = \int_{Z^{2}} \frac{n(n-1)}{d_0^2} -\int_{Z^{2.5}}\frac{|\nabla f_w|^2}{f_w^2} \\ &< \int_{Z^2}(R-\Ric(\nu,\nu)).
\end{align*}

\end{proof}

\subsection{Appearance of stable geodesics and stable spheres}

In this subsection, we will use "stable sphere" (resp. "stable geodesic") to denote a closed stable embedded minimal $2$-sphere (resp. a simple closed stable geodesic).

\begin{theo} \label{appearance1}

There exists a two-sphere $(M,g)$ such that
\begin{enumerate}
\item $(M,g)$ does not contain stable geodesics, 
\item a stable geodesic appears along the Ricci flow starting at $(M,g)$. 
\end{enumerate}

\end{theo}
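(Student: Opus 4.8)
The plan is to take $(M,g)$ to be the thin hook $\Gamma[L,\bar{\epsilon}]$ of the previous subsection in the case $n=1$, for $\epsilon_1,\epsilon_2,d_0$ fixed small and $L$ large, rescaled so that it is a normalized closed surface diffeomorphic to $S^2$; I will verify (1) and (2) for this family, in that order.

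For (1) I would use the foliation of $M$, away from the two tips of the caps, by the circles $Z^s$, $s\in(0,4)$. The geodesic curvature of $Z^s$ in $M$ is a nonzero multiple of $\phi[L,\bar{\epsilon}]'(s)$, so by the monotonicity requirements on $\phi$ (increasing on $[0,1.5]$, decreasing on $[1.5,4]$, and we may arrange it strictly monotone away from $1.5$) the only $Z^s$ which is a closed geodesic is $Z^{1.5}$, the top of the bump; by Lemma \ref{properties}(2) the Gauss curvature of $M$ is positive along $Z^{1.5}$, so the constant normal variation has strictly negative second variation and $Z^{1.5}$ is unstable. For a simple closed geodesic $c$ not equal to any $Z^s$, I would run a maximum principle argument: since $\phi'\neq 0$ off the top of the bump, along each straight branch and along the hook the leaves $Z^s$ are strictly mean convex in $M$ in one fixed direction, so $c$ cannot be confined to the union of those regions; hence $c$ meets a neighborhood of a cap or of $Z^{1.5}$, and there one combines the positivity of $K$ (Lemma \ref{properties}(2), together with the positive curvature of the caps) with Gauss--Bonnet on a disk bounded by $c$ to again produce a destabilizing variation. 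This case analysis rules out all stable geodesics.

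For (2) I would pass to the limit $L\to\infty$ with $\epsilon_1,\epsilon_2,d_0$ fixed and basepoints $x_L$ near the hook. Because the caps recede to infinity while the bump and the curved part of the hook stay at bounded distance, the based surfaces $(M_L,g_L,x_L)$ converge geometrically to a complete surface $\Sigma_\infty$: a flat cylinder $S^1\times\mathbb{R}$ carrying, in a bounded region, the perturbation coming from the bump and from the bending of $\gamma_L$. The Ricci flow on $\Sigma_\infty$ exists on $[0,T_\infty)$ with a curvature bound on $[0,t]$ for each $t<T_\infty$ (the perturbation diffuses away), so Lemma \ref{geomconv} --- whose hypotheses are checked using Theorem \ref{canoneighbo} on the long tubular pieces --- gives convergence of the based Ricci flows $(M_L,g_L(t),x_L)$ to the flow on $\Sigma_\infty$ on that interval. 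The heart of the argument is to show that the flow on $\Sigma_\infty$ develops, at some $t_0>0$, a closed geodesic of nonzero length that is a local minimum of length (a "waist"). Using $\tfrac{d}{dt}|Z^a|=-\int_{Z^a}\bigl(R-\Ric(\nu,\nu)\bigr)$ one sees that the cross-sections through the curved part of the hook have negative total curvature, hence lengthen under the flow, while the bump (a hump of the length profile) shrinks and the rest of the cylinder stays essentially flat; Lemma \ref{properties}(3) is precisely the infinitesimal comparison showing the hook region grows relative to its base. Consequently the length profile $s\mapsto|Z^s|$, which is monotone between the bump and the hook at $t=0$, acquires a strict interior minimum at some $t_0>0$; the cross-sectional circle at that minimum is a strictly stable geodesic of $\Sigma_\infty$ at time $t_0$. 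By the geometric convergence, $(M_L,g_L(t_0))$ contains a stable geodesic for all large $L$, and since $(M_L,g_L)$ has none by (1), a stable geodesic has appeared along the flow.

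The main obstacle is the waist-formation claim for $\Sigma_\infty$: the computation at $t=0^{+}$ only identifies the competing effects (hook lengthening, bump shrinking), and one must control the genuine evolution of the metric near the hook over a positive time interval --- where the curvature is large, of order $d_0^{-1}$, so the flow acts quickly --- by comparison with model solutions, in order to certify that a strict interior minimum of the length profile really forms and is not instantly smoothed out. Secondary technical points are the exclusion of exotic closed geodesics in step (1), and choosing $\epsilon_1,\epsilon_2,d_0$ small enough in the right order before letting $L\to\infty$, so that both conclusions hold for a single metric $g$.
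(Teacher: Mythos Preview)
Your approach to (2) is genuinely different from the paper's and has a gap beyond the one you flag. The paper does not pass to a geometric limit or try to exhibit the stable geodesic as a specific cross-section $Z^s$. Instead it runs a min-max argument inside the fixed region $Y^{\epsilon_2}$ (Lemma~\ref{appears} with $n=1$, via Remark~\ref{dimension 1}): one shows that the width $W(Y^{\epsilon_2}_{t_0},\partial Y^{\epsilon_2}_{t_0},\Lambda^{\epsilon_2}_{t_0})$ strictly exceeds $\mathcal{H}^1(\partial X^{\epsilon_2}_{t_0})$ for some small $t_0>0$, by a contradiction/compactness argument in which any near-optimal slice of the right enclosed area is forced to varifold-converge to $Z^{2.5}$, and then the first-order comparison of Lemma~\ref{properties}(3) gives the contradiction. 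Min-max then produces a closed geodesic, and if it is unstable one minimizes on one side to get a stable one. The point is that this only needs the \emph{infinitesimal} inequality at $t=0$, packaged through a soft width comparison; it never requires tracking the full profile over a positive time interval.

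Your route, by contrast, needs two things you do not establish. First, the waist-formation claim is not just delicate but structurally problematic: the hook is \emph{not} rotationally symmetric near the curved part of $\gamma_L$ (the tube around a bent planar curve has metric $(1-d_0\kappa\cos\theta)^2 ds^2 + d_0^2 d\theta^2$, so the $s$-coefficient depends on $\theta$), and the $2$-dimensional Ricci flow, being conformal, does not preserve the ``foliated by $Z^s$'' structure. Hence even if the function $s\mapsto |Z^s_t|$ acquired an interior minimum, the corresponding $Z^s$ would in general not be a geodesic at time $t$, let alone a stable one; you would still need a separate existence mechanism (which is exactly what the min-max supplies). Second, Lemma~\ref{geomconv} as stated is for closed $3$-manifolds and the canonical neighborhood theorem, so invoking it for the surface case needs its own justification. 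Your sketch for (1) is also too thin: a stable simple closed geodesic need only \emph{meet} the positive-curvature region, so ``Gauss--Bonnet on a disk bounded by $c$'' does not by itself destabilize $c$; the paper (Lemma~\ref{nonex}) instead builds explicit test functions on long arcs of $c$ lying in the positively curved straight branch and, in a second case, uses embeddedness to bound the length and pass to a limit contradicting stability near $Z^{1.5}$.
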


\begin{theo} \label{appearance2}
Let $M$ be a spherical space form $S^3/\Gamma$ which is endowed with a metric $g$ of positive scalar curvature. Suppose that $(M,{g})$ does not contain any stable sphere or embedded minimal $\mathbb{R}P^2$ with stable oriented double cover. Then for all point $p\in M$ and radius $r>0$, there is a metric $\tilde{g}$ on $M$ coinciding with $g$ outside $B_g(p,r)$ such that
\begin{enumerate}
\item $\tilde{g}$ has positive scalar curvature,
\item $(M,\tilde{g})$ does not contain any stable sphere or embedded minimal $\mathbb{R}P^2$ with stable oriented double cover, 
\item a stable sphere appears along the Ricci flow starting at $(M,\tilde{g})$. 
\end{enumerate}

\end{theo}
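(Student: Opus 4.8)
The plan is to build the metric $\tilde{g}$ by grafting one of the thin hooks $\Gamma[L,\bar\epsilon]$ of the previous subsection into $(M,g)$ inside the small ball $B_g(p,r)$, choosing the parameters $d_0,\epsilon_1,\epsilon_2$ small and $L$ large. Concretely, one uses that a small geodesic ball in $M$ can be realized, up to a controlled perturbation, as a cap isometric to the ``swollen branch'' region $\Delta[L,\epsilon_1]$ of the hook (which by construction has positive sectional curvature and is independent of $L$), and one glues the rest of $M$ in place of the long thin cylindrical tail $(-1,(3-s)L)$. Since all the modification happens where the hook agrees with the standard neck model, and since by Lemma \ref{properties}(1) the hook has positive scalar curvature bounded below independently of $L$ as soon as $d_0,\epsilon_1,\epsilon_2$ are small, the resulting metric $\tilde g$ coincides with $g$ outside $B_g(p,r)$ and has $R>0$; this gives item (1). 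The point of making the neck long ($L$ large) and thin ($d_0$ small) is that under the Ricci flow the thin neck will pinch: one invokes the Type I neckpinch behaviour, i.e. applies the canonical neighbourhood theorem (Theorem \ref{canoneighbo}) together with the comparison for the scalar curvature on a neck \eqref{neck derivative} to show that a nontrivial singularity forms, and near the singular time the central two-sphere of the neck is a minimal (in fact totally geodesic in the limit model) and \emph{stable} sphere — this yields item (3).

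For item (2), that $(M,\tilde g)$ still contains no stable sphere (nor embedded minimal $\mathbb{R}P^2$ with stable double cover), the strategy is a min-max / catenoid-type argument showing that a thin enough hook cannot support a stable minimal sphere: any stable sphere would have to pass through the thin neck, and by the monotonicity/foliation structure one can sweep it off. More precisely I would use Lemma \ref{properties}(2)--(3): the hook has positive sectional curvature on the region $\{x_1>0, x_2<-1\}$ and on a fixed-size neighbourhood of the slice $\{x_1>0, x_2=-1/2\}$, which forbids stable minimal surfaces from lingering there; and the strict inequality $-\int_{Z^{2.5}}(R-\Ric(\nu,\nu)) > -\int_{Z^2}(R-\Ric(\nu,\nu))$ in Lemma \ref{properties}(3) is exactly the second-variation-type quantity that obstructs stability of a minimal sphere sitting across the neck, via the stability inequality $\int_\Sigma |\nabla u|^2 \geq \int_\Sigma (\Ric(\nu,\nu)+|A|^2)u^2$ tested on a suitable function (or via the rewriting $\Ric(\nu,\nu) = R - 2K_\Sigma + |A|^2 - H^2$ and Gauss--Bonnet for the sphere). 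One then argues that for $d_0$ small enough and $L$ large enough no stable sphere can exist: it cannot be confined to the positively-curved cap, and it cannot cross the thin neck without violating stability. The same argument, applied to the oriented double cover, rules out a stable $\mathbb{R}P^2$.

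The main obstacle I expect is item (3) — rigorously producing the stable minimal sphere and the nontrivial singularity. Showing $R>0$ and the no-stable-sphere property (items (1)--(2)) are essentially geometric bookkeeping with the explicit hook, but proving that the long thin neck actually pinches under the flow (rather than, say, the whole manifold shrinking to a round point first) requires a genuine comparison argument: one must show the neck region develops curvature blowup strictly before the rest, e.g. by estimating the shrinking rate of the neck cross-sections (which behave like the round-cylinder model $ds^2 + d\theta^2$ with $S^2$ of scalar curvature $1/(1-s)$) against an upper bound for the flow elsewhere, and then invoke the canonical neighbourhood theorem to identify the model near the singularity and read off the stable sphere as the central sphere of the $\epsilon$-neck. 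Handling the interaction between the grafted region and the original $(M,g)$ — ensuring the surgery-compatible structure and that the singularity is nontrivial (i.e. some part of $M$ survives) — is where the delicate estimates, presumably deferred to the next section's Type I neckpinch construction (Proposition \ref{singularity neck}), will be needed.
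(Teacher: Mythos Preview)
Your proposal has a genuine gap in the mechanism for item (3), and a related misreading of Lemma \ref{properties}(3) that also affects your approach to item (2).

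For item (3), the paper does \emph{not} argue by producing a neckpinch singularity and then reading off a stable sphere near the singular time. That long-time argument is reserved for Theorem \ref{appearance3} and requires an additional twisting of the hook (the stretching factor $L_{st}$) plus Lemma \ref{key} and Lemma \ref{thin tube}. For Theorem \ref{appearance2} the paper instead proves Lemma \ref{appears}: a first-order (short-time) min-max argument showing that for $\epsilon_2$ small there is a time $t_0(\epsilon_2)\to 0$ at which the width of the sweepout of $Y^{\epsilon_2}_{t_0}$ strictly exceeds $\mathcal{H}^n(\partial X^{\epsilon_2}_{t_0})$, whence Theorem \ref{two boundaries} yields a minimal (and after minimization, stable) sphere. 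The crucial point you missed is the meaning of Lemma \ref{properties}(3): the quantity $-\int_\Sigma (R-\Ric(\nu,\nu))$ is precisely $\frac{d}{dt}\mathcal{H}^n(\Sigma_t)$ under Ricci flow, so the strict inequality there says that the slice $Z^{2.5}$ \emph{grows faster} (at $t=0$) than $Z^2=\partial X$. A Taylor expansion plus a varifold compactness argument then forces the width to exceed $\mathcal{H}^n(\partial X_t)$ for small $t>0$. Your neckpinch route would need you to control the flow for a long time and rule out a trivial singularity first --- exactly the ``main obstacle'' you flagged --- whereas the paper sidesteps this entirely.

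For item (2), your plan to use Lemma \ref{properties}(3) as a second-variation obstruction is a misinterpretation; as above, that lemma is about Ricci-flow area evolution, not stability. The paper's argument is instead a compactness/removable-singularity argument: if for arbitrarily small parameters there were stable spheres in $(M,\tilde g)$, they would subconverge (using Schoen's curvature estimates and the scalar-curvature area bound) to a stable minimal surface in $(M\setminus\{p\},g)$ with a removable singularity at $p$, contradicting the hypothesis on $(M,g)$. The gluing itself is done via the Gromov--Lawson construction so that $(B_g(p,r),\tilde g)$ is foliated by convex spheres, which is what makes the limit argument go through.
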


In the case where $M$ is two-dimensional, we clearly cannot assume its Gauss curvature to be positive at time $0$ since this property will be preserved along the Ricci flow and this will prevent the existence of stable geodesics.

From now on, we assume $d_0$, $\epsilon_1$, $\epsilon_2$ small enough so that by Lemma \ref{properties} (1), $R>0$ on $\Gamma[{L,\bar{\epsilon}}]$ when $n=2$. To prove Theorem \ref{appearance1}, we will need the following lemma. 

\begin{lemme} \label{nonex}
Let $n=1$. There exists a positive constant $C_0$ such that if $L>C_0$ and $\epsilon_1<1/C_0$, then for all $\epsilon_2$ sufficiently small the surface $\Gamma[{L,\bar{\epsilon}}]$ contains no stable geodesics.
\end{lemme}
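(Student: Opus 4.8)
The plan is to rule out simple closed geodesics in $\Gamma[L,\bar\epsilon]$ by region-by-region analysis, exploiting that the hook-shaped surface is built from convex pieces, necks, and caps. First I would fix the surface $\Gamma_L$ (the limit $\epsilon_1\to 0$) and argue by a continuity/compactness argument: if some sequence $L_k\to\infty$, $\epsilon_{1,k}\to 0$, $\epsilon_{2,k}\to 0$ admitted stable geodesics $\sigma_k$, then by the curvature and length bounds of Lemma \ref{properties} one extracts a limiting closed geodesic in one of the model limit geometries, which should be ruled out directly. So the core task is a geometric dichotomy on $\Gamma_L$ according to where a putative stable geodesic $\sigma$ can live.

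The key steps, in order. (i) A stable (in fact any) closed geodesic cannot enter the region where the ambient surface has positive Gauss curvature and is ``too convex'' to support a geodesic that closes up; more precisely, on the two long straight cylindrical portions $\{x_2\in[-(L-1),\,?]\}$ the induced surface is (after the rescaling making the neck round) a thin round $S^1\times$ interval — here the only closed geodesics are the circular fibers, and I would show these are \emph{unstable} (the second variation in the axial direction is negative because the cylinder can be shortened by sliding), so they are excluded; the concavity of $\phi[L,\bar\epsilon]$ on $[0,1]$ (Lemma \ref{properties}(2), positive sectional/Gauss curvature near the swollen branch) forces any geodesic meeting that branch either to stay in a strictly convex cap — where by Gauss–Bonnet / the convexity there is no closed geodesic other than ones that would have to be unstable — or to cross into the bent part $\mu$. (ii) On the bent ``elbow'' piece coming from $\mu$, which after scaling is a fixed compact model independent of $L$, one shows for $d_0$ small that the surface there is close (after rescaling) to a piece of a round cylinder bent into a half-circle; again closed geodesics there are (near-)fibers and unstable. (iii) A geodesic confined to a single straight cylindrical branch and closing up must be a fiber circle, handled as in (i). (iv) A geodesic cannot ``thread'' from one branch through the elbow and back, because the total length would be bounded below by a constant $\gg$ the fiber length, and one can produce a length-decreasing variation (push it off toward a thinner part) contradicting stability; this is where the specific monotonicity of $\phi[L,\bar\epsilon]$ (increasing then decreasing, with a unique max at $1.5$) is used to give a well-defined ``downhill'' direction along which every such long geodesic can be shortened.

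I expect the main obstacle to be step (iv): controlling geodesics that wander over a macroscopic portion of the thin hook, for which there is no single model geometry, and showing rigorously that stability fails. The natural tool is a stability inequality tested against a vector field obtained from the gradient of (a smoothed version of) the arc-length parameter $s$ composed with $\gamma_L$, or against the ambient Killing-type rotation of the neck directions; one must check that the index form evaluated on this test field is negative, which reduces to an integral estimate saying that the ambient curvature contribution along $\sigma$ is dominated by the variation of the ``width'' $\phi$. Lemma \ref{properties}(1) (scalar curvature bounded below uniformly in $L$, hence Gauss curvature of the surface bounded below when $n=1$ after the relevant reduction) and the strict concavity/monotonicity of $\phi$ should give exactly the sign needed, but assembling these into one clean inequality valid across the transition regions $[1/6,1/3]$, $[2/3,5/6]$ where $\mu$ is only ``chosen smooth'' is the delicate point; I would handle it by taking $L$ large and $\epsilon_1$ small so those transition zones occupy a negligible fraction of any long geodesic, then absorbing their contribution into an error term.
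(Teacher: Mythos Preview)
Your compactness/contradiction framework is sound, and the paper does argue this way: assume sequences $L_k\to\infty$, $\epsilon_{1,k}\to 0$, $\epsilon_{2,k,l}\to 0$ with stable geodesics $S_{k,l}$, and reach a contradiction. But your region-by-region dichotomy rests on a false claim, and you miss the two tools that actually do the work.

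The error is in step (i) (and it recurs in (ii), (iii)). On a flat round cylinder $S^1\times\mathbb{R}$, the fiber circles are \emph{not} unstable: sliding along the axis is a Jacobi field, so the second variation in that direction is zero, and for any variation $\phi$ one has $\int(|\dot\phi|^2-K\phi^2)=\int|\dot\phi|^2\ge 0$. Thus the fibers are (weakly) stable, and ``can be shortened by sliding'' is simply not true. So your mechanism for excluding the most obvious candidates---fiber circles on the long tube pieces---fails outright. The correct way to rule these out uses the specific geometry: the paper observes that the family $\{Z^s\}$ foliates the hook by curves that are \emph{strictly concave} for $s\in(1.5,4)$ and \emph{strictly convex} for $s\in(0,1.5)$ (this is where the monotonicity and concavity of $\phi$ enter), so by the maximum principle any closed geodesic must cross the waist $Z^{1.5}$. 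In particular no fiber circle off the waist is even a geodesic in $\Gamma[L,\bar\epsilon]$, and the analysis is forced to focus on curves through $Z^{1.5}$.

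With that localization in hand, the paper's contradiction splits on the minimum height $x_2(p_{k,l})$ of the geodesic. If the geodesic dips into the end-cap region (Case 1), then since $\Gamma[L,\bar\epsilon]\cap\{x_1>0,\ x_2<-1\}$ has positive Gauss curvature (Lemma~\ref{properties}(2)) and the geodesic must traverse an arbitrarily long piece of it when $L$ is large, one builds a test function supported there with $\int(|\nabla\phi|^2-K\phi^2)<0$, contradicting stability. If it does not (Case 2), one uses \emph{embeddedness}: at the lowest point the tangent is horizontal, so the geodesic is a graph over the nearby fiber circle, hence has length $\le 6\pi d_0$; passing to the limit gives a stable geodesic converging to $Z^{1.5}$, but a fixed neighborhood of $Z^{1.5}$ has positive curvature (Lemma~\ref{properties}(2)), so the limit is unstable. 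Your step (iv) gestures at a test-field argument but never names the positive-curvature region, and your step (i) would have discarded exactly the curves that survive to Case 2. The two missing ingredients are the convex/concave foliation plus maximum principle, and the graph/embeddedness length bound.
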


\begin{proof}
Suppose by contradiction that there are two families $\{L_k\}_{k\in\mathbb{N}}$, $\{\bar{\epsilon}_{k,l} = (\epsilon_{1,k},\epsilon_{2,k,l})\}_{(k,l)\in \mathbb{N}^2}$ such that
$$L_k \to \infty \text{ as } k\to\infty$$
$$\epsilon_{1,k}\to 0\text{ as } k\to\infty$$
$$\forall k, \quad \epsilon_{2,k,l}\to 0 \text{ as } l\to\infty,$$
and a simple closed stable geodesic $S_{k,l}$ in $\Gamma[{L_k,\bar{\epsilon}_{k,l}}]$ for all ${(k,l)\in \mathbb{N}^2}$. We orientate a curve in $\Gamma[{L_k,\bar{\epsilon}_{k,l}}]$ of the form 
$$Z_{k,l}^s = \{
x\in H[{\gamma_{L_k}(s)}] ; d(x,\gamma_{L_k}(s))=d_0\phi[{L_k,\bar{\epsilon}_{k,l}}](s)
\}$$ where $s\in (0,4)$, by imposing that the outward normal $\nu$ is such that $\langle \nu,\gamma_{L_k}'(s)\rangle >0$. By construction, $\{Z_{k,l}^s\}_{s\in(1.5,4)}$ (resp. $\{Z_{k,l}^s\}_{s\in(0,1.5)}$) is a foliation of 

$$A^+_{k,l}=\Gamma[{L_k,\bar{\epsilon}_{k,l}}]\cap \{(x_1,x_2,x_{3}) ; x_1<0 \text{ or } x_2>-1/2\}$$
$$\text{(resp. } A^-_{k,l}=\Gamma[{L_k,\bar{\epsilon}_{k,l}}]\cap \{(x_1,x_2,x_{3}) ; x_1>0 \text{ and } x_2<-1/2\})$$
by concave (resp. convex) curves. Hence by the maximum principle, $S_{k,l}$ cannot be entirely contained in $A^+_{k,l}$ or in $A^-_{k,l}$. In other words, $S_{k,l}$ must intersect the central curve $Z_{k,l}^{1.5}$.

For a point $p\in \mathbb{R}^2$, we denote by $x_2(p)$ its second coordinate. Let $p_{k,l}$ be a point of $S_{k,l}$ such that $x_2(p_{k,l}) = \min_{p\in S_{k,l}} x_2(p)$. We already know by the previous paragraph that $x_2(p_{k,l})\leq -1/2$. By extracting a subsequence in $k$ and then in $l$ for each $n$, one can distinguish two situations:
\begin{enumerate}
\item there is a constant $\kappa_0>0$ independent of $k$, $l$ such that $x_2(p_{k,l})<-L_k-\kappa_0$
\item or $\liminf_{k\to \infty} [\inf_{l}(x_2(p_{k,l})+L_k)] \geq 0$.
\end{enumerate}
Recall the notation
$$\Delta[L_k,\epsilon_{1,k}]=\Gamma[{L_k,(\epsilon_{1,k},0)}]\cap \{(x_1,x_2,x_{3}) ; x_2<-L_k\}.$$
Suppose by contradiction that situation $(1)$ is true. Using the limit surfaces $\Gamma[{L_k,(\epsilon_{1,k},0)}]$ and the fact that the $\Delta[L_k,\epsilon_{1,k}]$ have positive Gauss curvature $K$ and are isometric, we infer that there is a constant $\kappa_2>0$ (independent of $k$) such that for all $k$:
$$\limsup_{l\to \infty} \int_{\tilde{S}_{k,l}} K > \kappa_2,$$
where $\tilde{S}_{k,l} = S_{k,l}\cap \Delta[L_k,\epsilon_{1,k}]$. Since $\Gamma[{L_k,\bar{\epsilon}}_{k,l}] \cap \{(x_1,x_2,x_{3}) ; x_1>0 \text{ and } x_2<-1\}$ has positive Gauss curvature and since we can choose $k$ so that the length of $S_{k,l}\cap  \{(x_1,x_2,x_{3}) ; x_1>0 \text{ and } x_2\in (-L_k,-1)\}$ is arbitrarily large, we can find a function $\phi$ on $S_{k,l}$ having a support included in $S_{k,l}\cap  \{(x_1,x_2,x_{3}) ; x_1>0 \text{ and } x_2 < -1\}$ such that
$$\int_{S_{k,l}} (|\nabla \phi|^2 - K \phi^2)<0$$  
for an $k$ sufficiently large and $l$ large in comparison. This contradicts the stability of the geodesic $S_{k,l}$.

We have to rule out situation $(2)$ by using the embeddedness of $S_{k,l}$. Let us show that the length of $S_{k,l}$ is necessarily bounded, for example by $6\pi d_0$, for $k$ large and $l$ large in comparison. Consider the subset $I_{k,l}$ of $S_{k,l}$ consisting of all the points in $S_{k,l}$ at distance less than $3\pi d_0$ to $p_{k,l}$, where the intrinsic distance of $S_{k,l}$ is used. Let $s_{k,l}$ be such that $x_2(\gamma_{L_k}(s_{k,l})) = x_2(p_{k,l})$. Since the tangent vector of $S_{k,l}$ at $p_{k,l}$ is orthogonal to $(0,1,0)$, and because of the geometry of the limit surfaces $\Gamma[{L_k,(\epsilon_{1,k},0)}]$, $I_{k,l}$ is an embedded multivalued graph with small gradient in $\Gamma[{L_k,\bar{\epsilon}_{k,l}}]$ over $Z_{k,l}^{s_{k,l}}$ for $k$, $l$ large. But the latter is close to a standard circle of radius $d_0$ for $k$, $l$ large so this situation is possible only if $I_{k,l}$ actually contains the whole geodesic $S_{k,l}$ and is a one-valued graph. Now that we have bounded the length of $S_{k,l}$ independently of $l$ for each $k$ large, and since each $S_{k,l}$ intersects $Z_{k,l}^{1.5}$, we can extract a subsequence in $l$ converging with multiplicity one to a stable geodesic $S_k$ in $\Gamma[{L_k,(\epsilon_{1,k},0)}]$ of length less than $6\pi d_0$. The sequence $\{S_k\}$ in turn converges subsequently in $\mathbb{R}^3$ to 
$$Z^{1.5}= \{x\in H[{\gamma_{L}(1.5)}] ; d(x,\gamma_{L}(1.5))=d_0\},$$
because $\Gamma[{L_k,(\epsilon_{1,k},0)}] \cap \{(x_1,x_2,x_{3}) ; x_1>0 \text{ and } -1<x_2<0\}$ becomes cylindrical as $k\to \infty$.
This is a contradiction with the stability assumption since in a neighborhood of $Z_{k,l}^{1.5}$ independent of $(k,l)$ (see Lemma \ref{properties} (2)), the sectional curvature of $\Gamma[{L_k,\bar{\epsilon}_{k,l}}]$ is positive.

\end{proof}

The next lemma is true for $1\leq n \leq 6$. We fix $\epsilon_1\in(0,1/4)$ and $L>1$. Let $\delta>0$ and define 
 $$Y^{\epsilon_2} =\Gamma[{L,(\epsilon_1,\epsilon_2)}] \cap \{(x_1,...,x_{n+2}) ; x_1<0 \text{ or } x_2>-\delta\}.$$
We choose $\delta\in (0,1/2)$ so that the boundaries $\partial Y^{\epsilon_2}$ are isometric and convex for all $0<\epsilon_2< \delta$. Define also
$$X^{\epsilon_2} = \Gamma[{L,(\epsilon_1,\epsilon_2)}]\cap \{(x_1,...,x_{n+2}) ; x_1<0 \text{ or } x_2>0\}
.$$
Similarly, we define $Y$ and $X$ by replacing $\Gamma[{L,(\epsilon_1,\epsilon_2)}]$ by $\Gamma[{L,(\epsilon_1,0)}]$ in the above formulas and we write $Y^0:=Y$, $X^0:=X$. Now for $\epsilon_2\in[0,\delta)$, suppose that $Y^{\epsilon_2}$ is isometrically embedded in a closed $(n+1)$-manifold $N^{\epsilon_2}$, in such a way that $N^{\epsilon_2}$ converges to $N^0$ as $\epsilon_2\to 0$. Let $\{N^{\epsilon_2}_t\}_{t\in[0,T)}$ be a solution of the Ricci flow starting at $N^{\epsilon_2}$ defined on a time interval $[0,T)$. If $V$ is a subset of $N^{\epsilon_2}$, let $V_t$ denote the Ricci flow at time $t$ starting at $V$ obtained by restriction of the original Ricci flow solution on $N^{\epsilon_2}$. By abuse of notations, we view $Y^{\epsilon_2}$ as a subset of $N^{\epsilon_2}$ in Lemma \ref{appears}.

In the proof, we will consider currents and varifolds in the closure $\bar{Y}$ which is isometrically embedded in $\mathbb{R}^{n+2}$. If $U$ is an open subset of $\bar{Y}$, the corresponding $(n+1)$-dimensional current will be called $[|U|]$ and if $C$ is an integral current, $|C|$ will be the name of the integer rectifiable varifold it determines by forgetting its orientation. If $k\in[|0,n+1|]$, the Grassmannian of $k$-planes in $\mathbb{R}^{n+2}$ is denoted by $\mathbf{Gr}(k,n+2)$ and its restriction to ${Y}$ is denoted by $\mathbf{Gr}(k,n+2,Y)$. The Hausdorff measure $\mathcal{H}^k$ of a subset of $Y_t$ is computed using the metric on $Y_t$.

\begin{figure} 
\includegraphics[scale=0.4]{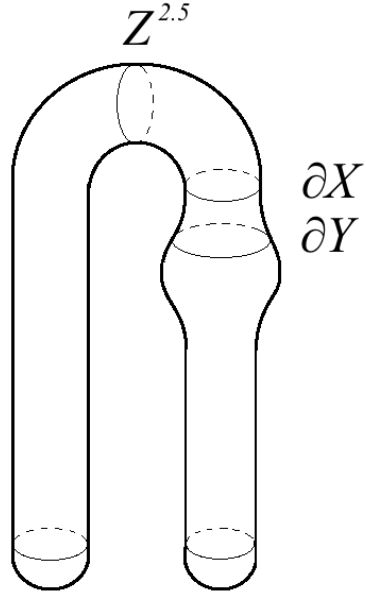}
\caption{When $\epsilon_1$ is small and $L$ large, $\Gamma[{L,(\epsilon_1,0)}]$ looks like a thin hook.}
\label{figA}
\end{figure}

\begin{lemme} \label{appears}
Suppose $1\leq n \leq 6$ and let $\epsilon_1\in(0,1/4)$. There exists a positive constant $C_1$ such that if $L>C_1$, then the following holds. For all $\epsilon_2 \in [0,\delta)$ small enough, there is a positive time $t_0=t_0(\epsilon_2)$ such that $Y^{\epsilon_2}_{t_0}$ contains an embedded stable minimal hypersurface. Moreover, $t_0$ can be chosen so that 
$$t_0(\epsilon_2)\to 0  \text{  as  } \epsilon_2\to 0.$$

\end{lemme}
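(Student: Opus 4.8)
The plan is to apply the min--max theorem (Theorem~\ref{two boundaries}, or Remark~\ref{dimension 1} when $n=1$) to the region $Y^{\epsilon_2}$ after it has flowed for a short time, using Lemma~\ref{properties}(3) to keep the width strictly above the area of $\partial X^{\epsilon_2}$, Lemma~\ref{properties}(2) to promote the resulting min--max hypersurface to a \emph{stable} one, and Lemma~\ref{geomconv} to let $t_0$ go to $0$ as $\epsilon_2\to 0$. Fix $\epsilon_2\in[0,\delta)$. Since $\partial Y^{\epsilon_2}$ is convex it is strictly mean convex, so $\partial Y^{\epsilon_2}_t$ stays mean convex for small $t$; and by Lemma~\ref{properties}(1), after the normalization of Theorem~\ref{canoneighbo}, the curvature of $\Gamma[{L,\bar{\epsilon}}]$ is bounded uniformly in $L$, so the flow is smooth on a definite time interval with bounds uniform in $L$ and in $\epsilon_2$ near $0$. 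Let $\Lambda$ be the homotopically closed family of sweepouts of $X^{\epsilon_2}_t$ in $Y^{\epsilon_2}_t$ generated by the cross-section sweepout which slides a cross-section of $\Gamma[{L,\bar{\epsilon}}]$ from $\partial X^{\epsilon_2}=Z^2$ through the bend and down the long left branch to the tip, and write $W_t:=W(Y^{\epsilon_2}_t,\partial Y^{\epsilon_2}_t,\Lambda)$ (for $n=1$, $Y^{\epsilon_2}$ and $X^{\epsilon_2}$ are disks and one uses instead the family of Remark~\ref{dimension 1}).

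The heart of the argument is the width estimate. At $t=0$ the largest slice of the cross-section sweepout is $Z^2=\partial X^{\epsilon_2}$, because the warping function is non-increasing past $s=1.5$ so every later slice has area at most $\mathcal{H}^n(Z^2)$; and every sweepout in $\Lambda$ has $\partial X^{\epsilon_2}$ as a slice, hence $W_0=\mathcal{H}^n(\partial X^{\epsilon_2})$ and the min--max theorem gives nothing at $t=0$. Differentiating areas under the flow by $\frac{d}{dt}\mathcal{H}^n(S_t)\big|_{t=0}=-\int_S(R-\Ric(\nu,\nu))$, Lemma~\ref{properties}(3) (and its obvious perturbation for small $\epsilon_2>0$) says that the bend cross-section $Z^{2.5}$ loses area strictly more slowly than $Z^2$, although the two have equal area at $t=0$. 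So for small $t>0$ the bend becomes genuinely swollen, no sweepout in $\Lambda$ can slip past it with a slice of area below $\mathcal{H}^n(Z^{2.5}_t)-o(t)$, and since $\mathcal{H}^n(\partial X^{\epsilon_2}_t)=\mathcal{H}^n(Z^2_t)$ we obtain
$$W_t\ \ge\ \mathcal{H}^n(Z^{2.5}_t)-o(t)\ >\ \mathcal{H}^n(Z^2_t)\ =\ \mathcal{H}^n(\partial X^{\epsilon_2}_t).$$
Fix such a $t$ and call it $t_0$.

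By Theorem~\ref{two boundaries} (resp.\ Remark~\ref{dimension 1}) applied at time $t_0$ there is an embedded minimal hypersurface $\Sigma\subset Y^{\epsilon_2}_{t_0}$ of total area $W_{t_0}$, contained in the interior: mean convexity of $\partial Y^{\epsilon_2}_{t_0}$ together with $W_{t_0}>\mathcal{H}^n(\partial X^{\epsilon_2}_{t_0})$ forbids degeneration onto the boundary. If $\Sigma$ is stable we are finished. If not, then since $W_{t_0}$ is close to $\mathcal{H}^n(Z^{2.5}_{t_0})$ the hypersurface $\Sigma$ is, up to multiplicity, concentrated in the bend, where by Lemma~\ref{properties}(2) the ambient sectional curvature is positive on an $L$-independent neighborhood; hence $\Sigma$ is two-sided and strictly unstable there, and can be opened into a sweepout strictly decreasing area on either side. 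Following that sweepout on the side of $\partial X^{\epsilon_2}_{t_0}$ and minimizing area in the region bounded by $\partial Y^{\epsilon_2}_{t_0}$ and $\Sigma$ — where, because the flow has shrunk the adjacent cylindrical part faster than the bend, the cross-sectional areas now have a genuine local minimum near $Z^2_{t_0}$ — produces an embedded stable minimal hypersurface of $Y^{\epsilon_2}_{t_0}$: the minimization cannot escape through the mean-convex $\partial Y^{\epsilon_2}_{t_0}$, nor back across the positively curved, area-increasing bend, so it converges to a minimal hypersurface at bounded distance from $Z^2_{t_0}$, which is therefore stable.

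Finally, $N^{\epsilon_2}\to N^0$ as $\epsilon_2\to 0$, so Lemma~\ref{geomconv} gives convergence of the Ricci flows $N^{\epsilon_2}_t\to N^0_t$ on a common time interval; hence $W_t$ and the areas $\mathcal{H}^n(Z^{2.5}_t)$, $\mathcal{H}^n(\partial X^{\epsilon_2}_t)$ depend continuously on $\epsilon_2$, and a version of the derivative comparison above that is uniform in $\epsilon_2$ near $0$ lets one take $t_0=t_0(\epsilon_2)\to 0$ as $\epsilon_2\to 0$. I expect the main obstacles to be: (i) making precise the lower bound $W_t\ge\mathcal{H}^n(Z^{2.5}_t)-o(t)$, i.e.\ that nothing homotopic to the cross-section sweepout slips past the swollen bend more cheaply than $Z^{2.5}_t$; and (ii) the passage from the (possibly unstable) min--max hypersurface to a genuinely stable one — verifying that the accompanying area minimization neither degenerates near the tip nor leaks past the bend or through $\partial Y^{\epsilon_2}_{t_0}$, which is exactly where the positive sectional curvature of the bend and the length of the left branch enter.
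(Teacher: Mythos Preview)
Your overall strategy coincides with the paper's: show the width inequality $W_{t_0}>\mathcal{H}^n(\partial X^{\epsilon_2}_{t_0})$, apply the min--max theorem, and then pass from the resulting minimal hypersurface to a stable one by area minimization between it and the mean-convex boundary. The reduction to $\epsilon_2=0$ and $t_0(\epsilon_2)\to 0$ via smooth dependence of the flow is also what the paper does.

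However, your obstacle (i) is not a technicality but the core of the argument, and your sketch does not supply it. The assertion ``no sweepout in $\Lambda$ can slip past the bend with a slice of area below $\mathcal{H}^n(Z^{2.5}_t)-o(t)$'' is exactly what has to be proved, and it is false without a mechanism that singles out the bend cross-section among \emph{all} competitor hypersurfaces (not just slices of the form $Z^a$). The paper obtains this by a volume-constrained isoperimetric argument: assuming $W_\tau=\mathcal{H}^n(\partial X_\tau)$ for all small $\tau$, one picks nearly optimal sweepouts and, using the continuity of the enclosed volume in Definition~\ref{definition}, extracts for each $\tau$ a slice $\Sigma^{\tau}_{s}$ bounding the same $(n{+}1)$-volume as $U^{2.5}$. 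The key technical step (the ``Claim'') is that, for $L$ large, $\partial[|U^{2.5}|]$ is \emph{mass-minimizing among all integral boundaries enclosing that fixed volume} in $\bar Y$; this is where $L>C_1$ truly enters, via a projection/slicing argument with the coarea formula. Combined with varifold convergence one gets that the slices converge as varifolds to $Z^{2.5}$, hence their area derivatives under the flow converge to that of $Z^{2.5}$, and Lemma~\ref{properties}(3) then contradicts the assumed equality $W_\tau=\mathcal{H}^n(\partial X_\tau)$ via a first-order Taylor expansion. Your proposal never invokes a volume constraint, and without it there is no reason a competing sweepout could not beat $Z^{2.5}_t$.

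For obstacle (ii) you are overcomplicating things and inserting unjustified claims. You assert that the min--max hypersurface $\Sigma$ is ``concentrated in the bend'' and appeal to Lemma~\ref{properties}(2); neither is needed nor proved. The paper simply observes that if $\Sigma$ is unstable one minimizes area in the component of $Y^{\epsilon_2}_{t_0}\setminus\Sigma$ containing $\partial Y^{\epsilon_2}_{t_0}$: the mean-convex boundary is a barrier on one side, $\Sigma$ (being unstable, hence admitting a strictly area-decreasing deformation to that side) is a barrier on the other, and the class is homologically nontrivial, so the minimizer is a genuine embedded stable hypersurface in the interior. No information on the location of $\Sigma$, and no positive-curvature argument, is required here.
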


\begin{proof}
From the proof, it will be clear that $\lim_{\epsilon_2\to 0} t_0(\epsilon_2) = 0$.

The boundary $\partial Y^{\epsilon_2}$ is convex with respect to the outward normal. Let $\Lambda^{\epsilon_2}$ be a sweepout in $Y^{\epsilon_2}_t$ associated with $X^{\epsilon_2}_t$ (see Definition \ref{definition}). We will show that when $\epsilon_2$ is small enough, for a positive time $t_0$ such that $\partial Y^{\epsilon_2}_{t_0}$ is still convex, we have:
\begin{equation} \label{width}
W(Y^{\epsilon_2}_{t_0},\partial Y^{\epsilon_2}_{t_0} , \Lambda^{\epsilon_2}_{t_0}) > \mathcal{H}^n(\partial X^{\epsilon_2}_{t_0}).
\end{equation}
Applying Theorem \ref{two boundaries} and Remark \ref{dimension 1}, we get an embedded minimal hypersurface $S^{\epsilon_2}$ in $Y^{\epsilon_2}_{t_0}$. If it is stable then the lemma is verified. If $S^{\epsilon_2}$ is not stable, then by minimizing its area in the connected open subset of $Y^{\epsilon_2}_{t_0} \backslash S^{\epsilon_2}$ whose boundary contains $ \partial Y^{\epsilon_2}_{t_0}$, we get an embedded stable hypersurface and the lemma is also verified in that case.

Hence to complete the proof, it remains to show (\ref{width}). Actually since the Ricci flow depends smoothly on the initial data, it is enough to check that if $\Lambda$ is the sweepout in $Y$ associated with $X$, then for all small positive times $t_0$, $\partial Y_{t_0}$ is convex and
$$W(Y_{t_0},\partial Y_{t_0} , \Lambda_{t_0}) > \mathcal{H}^n(\partial X_{t_0}).$$
For small times $\tau$, $\partial Y_{\tau}$ remains convex and subsequently we will only consider such small times. Assume by contradiction that for all small $\tau>0$, $W(Y_{\tau},\partial Y_{\tau} , \Lambda_{\tau}) = \mathcal{H}^n(\partial X_{\tau})$ and for each small $\tau>0$ let us choose a continuous sweepout $\{\Sigma^\tau_s\}_{s\in[0,1]}$ such that 
\begin{equation} \label{contrainte}
\max_s \mathcal{H}^n(\Sigma^\tau_s) \leq \mathcal{H}^n(\partial X_\tau) + \epsilon(\tau),
\end{equation}
where $\epsilon(\tau)$ is an arbitrary positive function converging to $0$ as $\tau$ goes to $0$ to be determined later. Let $\{\Omega^\tau_s\}_s$ be the family of open subsets of $Y_\tau$ associated with $\{\Sigma^\tau_s\}_s$ by Definition \ref{definition}. For $a \in [2,4-1/L]$, denote by $U^a$ the subset of $Y$ whose boundary (in $N^0$) is $Z^a $, where $Z^a= \{x\in H[{\gamma_{L}(a)}] ; d(x,\gamma_{L}(a))=d_0\}$. Let $\{\tau_k\}$, $\{s_k\}$ be two sequences such that $\tau_k\to 0$ and 
$$\mathcal{H}^{n+1}(\Omega^{\tau_k}_{s_k}) = \mathcal{H}^{n+1}(U^{2.5}).$$
We denote by $V^k$ the subset of $Y$ such that $V^k_{\tau_k}=\Omega^{\tau_k}_{s_k}$ (i.e. the open set of $N^0$ which becomes $\Omega^{\tau_k}_{s_k}$ at time $\tau_k$).
By \cite{FF}, we can choose $\{\tau_k\}$, $\{s_k\}$ so that $\partial [|V^{k}|]$ converges to an integral current ${C} = \partial [|V^\infty|]$ in the flat topology of $\bar{Y}$, where $\mathcal{H}^{n+1}(V^\infty) = \mathcal{H}^{n+1}(U^{2.5})$. We observe that (\ref{contrainte}) implies 
\begin{equation} \label{treza}
\mathbf{M}(C)\leq \omega_n d_0^n=\mathbf{M}(\partial [|U^{2.5}|])
\end{equation}
where $\omega_n$ is the volume of the $n$-dimensional unit round sphere. 

\textbf{Claim.} If $L$ was chosen large enough, then for $b\in[2,3]$, $\partial [|U^b|]$ is area minimizing among the currents $C'=\partial [|U'|]$ such that $$\mathcal{H}^{n+1}(U')= \mathcal{H}^{n+1}(U^{b}),$$
where $U'$ is an open subset relatively compact in $Y$, with a rectifiable boundary.

Let us prove this claim. Denote by $\hat{a}$ the function defined on $Y\backslash U^{4-1/L}$ such that $\hat{a}(x) = a$ if $\gamma_L(a)$ is the nearest point of $\gamma_L$ to $x$. Define the projection $\hat{p} : Y\backslash U^{4-1/L} \to Z^{4-1/L}$ such that $\hat{p}^{-1}(y)$ is exactly the line in $Y$ orthogonal to every $Z^a$ and beginning at $y\in   Z^{4-1/L}$. This projection enjoys the useful property of being area decreasing in the sense that if $R$ is a connected rectifiable set then 
$$\mathcal{H}^n(\hat{p}(R)) \leq \mathcal{H}^n(R)$$
with equality if and only if $R$ is included in a certain $Z^a$. Let $b$, $C'$, $U'$ be as above. To prove the claim, first note that when the support of $C'$ is contained in $Y\backslash U^{4-1/L}$ then we have $ \mathbf{M}(C') \geq \mathbf{M}(\partial [|U^{b}|])$ with equality if and only if $C'= \partial[|U^{b}|]$. Indeed, we can project $C'$ on $Z^{4-1/L}$ and get the current $\hat{p}_\sharp(C')$. By the constancy theorem, it is an integer multiple of $\partial [|U^{4-1/L}|]$. If it is non zero then $ \mathbf{M}(C') \geq \mathbf{M}(\partial [|U^{b}|])$. If it is zero then the varifold $\hat{p}_\sharp (|C'|)$ has mass at least twice $\mathcal{H}^n(\hat{p}(\partial U'))$ which has to be larger than $\frac{1}{2}\mathcal{H}^n(Z^{4-1/L})$ for large $L$: this is because if $A\subset Z^{4-1/L}$ has $n$-volume at most $\frac{1}{2}\mathcal{H}^n(Z^{4-1/L})$, then $\hat{p}^{-1}(A)$ has $(n+1)$-volume strictly less than $\mathcal{H}^{n+1}(U^{b})$ (for $L$ large). When the support of $C'$ is not contained in $U^{4-1/L}$, then by the coarea formula, there is a constant $\kappa$ independent of $L$ and an $a\in[3,3.5]$ which depends on $L$ such that $\spt(C')\cap Z^a$ is rectifiable and
$$\mathbf{M}(\langle C', \hat{a}, a\rangle ) \leq \kappa/L,$$
$$\mathbf{M}(\langle [|Y\backslash U'|], \hat{a}, a\rangle )  \leq \kappa/L,$$
where the notation for slicing is the same as in \cite[Chapter 2, \S\ 28]{Simon}. Consider the current $\hat{C}=\partial [|U^a \cup U'|]$. In fact, for $L$ large enough, 
\begin{equation} \label{stricte}
\mathbf{M}(\hat{C}) < \mathbf{M}(C').
\end{equation}
Indeed, by the monotonicity formula for minimal submanifolds, if $L$ is large then any area minimizing hypersurface in $Y$ with boundary $\langle C, \hat{a}, a\rangle$ must be contained in $Y\backslash U^{4-1/L}$ and so is equal to $\langle Y \backslash U', \hat{a}, a\rangle$ by the constancy theorem. Since $\spt \hat{C}\subset Y \backslash U^{4-1/L}$ and $\mathcal{H}^{n+1}(U^a \cup U') \geq \frac{1}{2}\mathcal{H}^{n+1}(Y) $ for large $L$, the previous argument shows that $ \mathbf{M}(\hat{C}) \geq \mathbf{M}(\partial [|U^{b}|])$. But then $C'$ has a bigger mass than $\partial[|U^{b}|]$ by (\ref{stricte}) as wished, and the claim is verified.

Consequently for $L$ large enough, (\ref{treza}) implies that the limit $C$ is actually $\partial [|U^{2.5}|]$ and that as $k\to \infty$, 
$$\mathbf{M}(\partial [|V^k|]) \to \mathbf{M}(\partial [|U^{2.5}|]).$$
By \cite[Chapter 2, 2.1, (18), (f)]{P}, the sequence of varifolds $|\partial [|V^{k}|]|$ converges subsequently to $|\partial [|U^{2.5}|]|$. 
Applying the definition of varifolds convergence to the function which sends $(x,H)\in\mathbf{Gr}(n,n+2,Y)$ to $-R+\Ric(\nu,\nu)$ where $\nu$ is a unit vector orthogonal to $H$ in $T_x\Gamma$, we have
$$\lim_{k\to \infty} \int_{\partial V^k} (-R+\Ric(\nu,\nu)) = \int_{Z^{2.5}} (-R+\Ric(\nu,\nu)),$$
which exactly means 
\begin{equation} \label{limit derivative}
\lim_{k\to \infty} \frac{\partial}{\partial t}\bigg|_{t=0} \mathcal{H}^n(\partial V^k_t) =
\frac{\partial}{\partial t}\bigg|_{t=0} \mathcal{H}^n(Z^{2.5}_t).
\end{equation}
To contradict inequality (\ref{contrainte}), we write the following Taylor expansions near $t=0$:
$$\mathcal{H}^n(\partial V^k_t) = \mathcal{H}^n(\partial V^k) + t.\frac{\partial}{\partial t}\bigg|_{t=0} \mathcal{H}^n(\partial V^k_t) + t^2 . \varphi_k(t),$$
$$\mathcal{H}^n(\partial X_t) = \mathcal{H}^n(\partial X) + t.\frac{\partial}{\partial t}\bigg|_{t=0} \mathcal{H}^n(\partial X_t) + t^2 . \phi(t),$$
where $\varphi_k$, $\phi$ are functions bounded independently of $k$ near $t=0$. 
By Lemma \ref{properties} (3),
\begin{equation} \label{grigri}
\frac{\partial}{\partial t}\bigg|_{t=0} \mathcal{H}^n(Z^{2.5}_t)>\frac{\partial}{\partial t}\bigg|_{t=0} \mathcal{H}^n(\partial X_t).
\end{equation}
Besides, the previous claim implies
\begin{equation} \label{grove}
\mathcal{H}^n(\partial V^k) \geq \mathcal{H}^n(Z^{2.5})=\mathcal{H}^n(\partial X).
\end{equation}
Hence, recalling that 
$$\partial V^k_{\tau_k} = \partial\Omega^{\tau_k}_{s_k} = \Sigma^{\tau_k}_{s_k}, $$
we combine (\ref{limit derivative}), (\ref{grigri}), (\ref{grove}) and the Taylor expansions to conclude for $k$ large:
$$\mathcal{H}^n(\Sigma^{\tau_k}_{s_k}) - \mathcal{H}^n(\partial X_{\tau_k}) > \frac{\tau_k}{2} \bigg{(} \frac{\partial}{\partial t}\bigg|_{t=0} \mathcal{H}^n(Z^{2.5}_t)-\frac{\partial}{\partial t}\bigg|_{t=0} \mathcal{H}^n(\partial X_t)\bigg{)}.$$
This is indeed the desired contradiction since the function $\epsilon(.)$ in (\ref{contrainte}) could converge arbitrarily fast to $0$, and this ends the proof.

\end{proof}

\begin{figure} 
\includegraphics[scale=0.4]{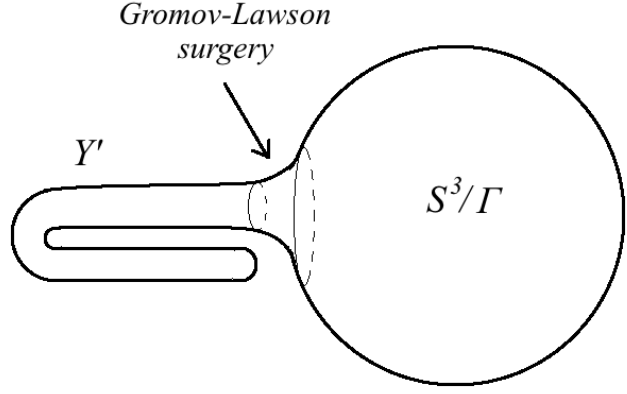}
\caption{Part of a thin hook, $Y'$, is glued to $S^3/\Gamma$ via the Gromov-Lawson procedure.}
\label{figB}
\end{figure}

\begin{proof}[Proof of Theorem \ref{appearance1} and Theorem \ref{appearance2}]

Theorem \ref{appearance1} follows from Lemmas \ref{nonex} and \ref{appears}, by taking $M=\Gamma[L,(\epsilon_1,{\epsilon_2})]=N^{\epsilon_2}$ with $\epsilon_1>0$, $\epsilon_2>0$ and $1/L$ sufficiently small.

To prove Theorem \ref{appearance2}, let $(M,g)$ be as in the statement. Choose any $p\in M$ and $r>0$ smaller than the injectivity radius of $(M,g)$ so that $\partial B_g(p,s)$ is convex whenever $0<s<r$. Let $r_0>0$ be smaller than $r$. Let $\epsilon_2$ be positive, consider a scaled-down version of $Y^{\epsilon_2}$ (as defined just before Lemma \ref{appears}) that we call $Y'$ and glue it to $M$ around $p$ by applying the Gromov-Lawson construction in $B_g(p,r_0)$ (see Figure \ref{figB}). We can modify the size of the following parameters: $r_0$, $\delta$, $\epsilon_2$, $Y'$. Analyzing how the forementioned construction is defined in \cite{GromovLawson} and taking the previous parameters small enough, we see that it can be done so as to get a new metric $\tilde{g}$ on $M$ verifying:
\begin{itemize}
\item $\tilde{g}$ coincides with $g$ outside $B_g(p,r_0)$,
\item $Y'$ is isometrically embedded in $(B_{{g}}(p,r_0),\tilde{g})$,
\item $(B_{{g}}(p,r),\tilde{g})$ is foliated by convex spheres,
\item the scalar curvature of $(M,\tilde{g})$ is bounded below by a positive constant independent of the parameters when the latter go to zero.
\end{itemize}
If the parameters are all small enough, then $(M,\tilde{g})$ contains no stable embedded minimal $2$-sphere or minimal $\mathbb{R}P^2$ with stable oriented double cover. Otherwise we could take the limit (subsequently by \cite{Sc}) and get a non-trivial oriented stable embedded minimal surface $S$ in $(M\backslash \{p\}, g)$ with finite area by the fourth item above (see \cite[Proposition A.1]{MaNe} for instance). By curvature estimates for stable surfaces (\cite{Sc}), the embedding is then proper, $S$ has finite Euler characteristic and the singularity at $p$ is removable (see \cite[Lemma 2.5]{LiZhou}, \cite[Proposition C.1]{ChodKetMax}, \cite{Gulliver} for instance). Hence the closure $\bar{S}$ is a smooth stable minimal $2$-sphere or a minimal $\mathbb{R}P^2$ with stable oriented double cover in $(M,g)$, contradicting our assumption. Finally, Lemma \ref{appears} ensures that a stable sphere appears along the Ricci flow starting at $(M,\tilde{g})$ provided $1/L$, $\epsilon_2$ are small enough.

\end{proof}

\subsection{Appearance of non-trivial singularities}


\begin{theo} \label{appearance3}
Let $M$ be a closed $3$-manifold satisfying the hypotheses of Theorem \ref{appearance2}. Then for all point $p\in M$ and radius $r>0$, there is a metric $\hat{g}$ on $M$ coinciding with $g$ outside $B_g(p,r)$ such that
\begin{enumerate}
\item $\hat{g}$ has positive scalar curvature,
\item $(M,\hat{g})$ does not contain any stable sphere or embedded minimal $\mathbb{R}P^2$ with stable oriented double cover, 
\item a non-trivial singularity occurs along the Ricci flow starting at $(M,\hat{g})$. 
\end{enumerate}
\end{theo}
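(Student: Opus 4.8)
The plan is to reuse the thin-hook construction from the previous subsection, but now glue in a hook whose swollen branch is genuinely too thin, so that the region around the handle pinches under the Ricci flow in finite time before any global collapse happens. Concretely, I would take a scaled-down portion $Y'$ of $\Gamma[L,(\epsilon_1,\epsilon_2)]$ exactly as in the proof of Theorem \ref{appearance2}, and glue it into $B_g(p,r_0)$ via the Gromov--Lawson procedure, obtaining a metric $\hat g$ with $R>0$, no stable spheres or $\mathbb{R}P^2$'s with stable oriented double cover (by the same compactness/removable-singularity argument), and such that a stable minimal $2$-sphere $S_{t_0}$ appears at some small time $t_0$ by Lemma \ref{appears}. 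The new input is that we arrange the geometry of the hook so that there is a thin neck-like tube $U \subset Y'$ — say a long portion of $\{(x_1,\dots,x_5): x_2 < -1\}$ which is almost a round cylinder $S^2\times[-A,A]$ of small radius $d_0$ — such that the minimal two-sphere produced at time $t_0$ is forced to lie inside this tube and has area bounded by roughly $\omega_2 d_0^2$.

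Next I would run the standard Perelman machinery to turn the persistence of a small stable sphere into a finite-time singularity. The key point is that the existence of a closed stable minimal two-sphere of small area acts as an obstruction to the flow being smooth forever: after normalizing and applying Theorem \ref{canoneighbo}, a long-lived flow on a closed $3$-manifold quotient of $S^3$ must eventually have all high-curvature regions covered by canonical neighborhoods, and in fact for quotients of $S^3$ with $R>0$ the flow becomes extinct in finite time (after possibly surgeries) and the singularities are controlled. I would argue that once the stable sphere $S$ has appeared, it either persists (sweeping through canonical neighborhoods) or its area must go to zero, and an area bound like $\vol(S_t) \le \vol(S_{t_0})$ together with the monotonicity/min-max comparison shows the sphere cannot simply disappear into a smooth round shrinking quotient — its presence shows the singularity is non-trivial, i.e. that $\{x : \lim_{t\to T}|Rm(x,t)| = \infty\} \ne M$. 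The cleanest route is probably: if the singularity at the first singular time $T$ were trivial, then near $T$ the whole manifold is $\epsilon$-round (or is a $C$-component), hence has positive Ricci curvature, hence contains no two-sided closed stable minimal surface — contradicting the persistence of $S$ near $T$. To make "the sphere persists near $T$" rigorous I would use that the branch of the hook away from the neck stays large and non-collapsed (the domain $\Delta[L,\epsilon_1]$ part), so curvature there stays bounded on $[0,T)$, forcing the singularity to be localized at the thin neck rather than engulfing $M$; meanwhile the stable sphere, being trapped in the collapsing neck, cannot escape.

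The main obstacle I expect is bookkeeping the interplay between (a) the appearance time $t_0$ from Lemma \ref{appears} (which only guarantees a stable sphere at one instant, not persistence) and (b) the singular time $T$, and showing $t_0 < T$ with a stable sphere surviving on a time interval reaching up to $T$. One way around this: rather than tracking the min-max sphere forward in time, re-run the min-max argument of Lemma \ref{appears} at every time $t$ in a suitable interval, using that the thin tube $U_t$ stays a thin almost-cylinder (with radius decreasing, by the neck curvature estimate \eqref{neck derivative}) while its convex bounding spheres persist — so the width inequality \eqref{width} holds at every such $t$, producing a stable sphere $S_t$ for all $t$ in $[t_0, T)$ with $\vol(S_t)\to 0$. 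This simultaneously gives persistence and the linearly-vanishing-area picture; combined with the trivial-singularity-implies-$\Ric>0$ dichotomy above, it forces the singularity at $T$ to be non-trivial. A secondary obstacle is ensuring the Gromov--Lawson gluing does not itself create extra curvature concentration that would change the singularity type; this is handled exactly as in Theorem \ref{appearance2}, by taking all gluing parameters small so that $R$ stays bounded below by a fixed positive constant and the collar region has bounded geometry, so the only place curvature can blow up is the intrinsically thin tube $U$.
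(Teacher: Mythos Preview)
Your proposal has a genuine gap: it reuses the untwisted hook of Theorem~\ref{appearance2}, and that is not enough to force a non-trivial singularity. The paper does \emph{not} proceed by tracking the stable sphere from Lemma~\ref{appears}; instead it introduces a new construction (a ``twisted'' hook, obtained by stretching the bent part by a large factor $L_{st}$) together with the key Lemma~\ref{key}, which says that an $\mathbb{R}$-invariant non-product warped metric on $S^2\times\mathbb{R}$ has \emph{strictly longer} extinction time than the product cylinder. After normalizing the glued metric, the pointed limits along the hook are: the standard cylinder near the straight branches, the standard initial metric near the free tip, and---thanks to the twist---a genuinely non-product warped $S^2\times\mathbb{R}$ near $Z^{2.5}$. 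Lemma~\ref{key} then guarantees that at a time just below the cylinder's extinction time there is a very thin $\epsilon$-neck whose central sphere separates the original $M$ (low curvature, since it rescales to flat $\mathbb{R}^3$) from the twisted region (still low curvature, by the longer lifespan). Lemma~\ref{thin tube} converts this separation into a non-trivial singularity.

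Without the twist, your central assertion that ``the branch of the hook away from the neck stays large and non-collapsed'' fails. After the normalization that makes $(M,\hat g)$ satisfy $|Rm|\le 1$, every pointed limit along the hook becomes either the standard cylinder or the standard initial cap; by \cite[Theorem 12.5]{MorganTian} these have the \emph{same} extinction time. So at the time the neck pinches, the free tip and the bent region are also blowing up, and there is nothing of bounded curvature on the far side of the neck to invoke. Your fallback---re-running the min-max of Lemma~\ref{appears} at every time $t$ to produce a stable sphere up to $T$---does not work either: Lemma~\ref{appears} is a first-order computation (via Lemma~\ref{properties}(3)) valid only for small $t$, and you give no mechanism for the width inequality to persist. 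Likewise, ``trivial singularity $\Rightarrow$ $\epsilon$-round or $C$-component $\Rightarrow$ $\Ric>0$'' is not an immediate consequence of the definitions and would itself need justification. The missing idea is precisely the comparison of extinction times provided by the twist and Lemma~\ref{key}.
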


For the appearance of stable spheres described in the previous subsection, a first order argument on the evolution of the thin hooks was enough. However we now want to study the long-time behavior and to prove that a non-trivial singularity occurs, we need to modify the metric of the hooks. We twist and stretch the bent part as follows. 
For all $\bar\epsilon$, $L$ and for all $a\in [2,3]$, using the previous notations we consider the subsets
$$Z^a[L,\bar\epsilon] = \{s\in H[\gamma_L(a)] ; d(x,\gamma_L(a)) = d_0\phi[L,\bar{\epsilon}](a)\} \quad a\in [2,3].$$
Let $\beta: [2,3]\to [0,1]$ be a bump function equal to zero in a neighborhood of $\{2,3\}$ and equal to one in [2+1/12,3-1/12]. Denote by $g_{eucl}$ the Euclidean metric in $\mathbb{R}^{n+2}$. For any $x\in Z^a[L,\bar\epsilon] $ (with $a\in [2,3]$), let $V(x)$ be a unit vector based at $x$ tangent to $\Gamma[L,\bar{\epsilon}]$ but normal to the hypersurface $Z^a[L,\bar\epsilon] $, in the metric induced by $g_{eucl}$. Then for any stretching factor $L_{st}\geq 0$, we define a new metric $g(L_{st})$ on $\Gamma[L,\bar{\epsilon}]$ such that it only differs from the metric induced by $g_{eucl}$ in $\bigcup_{a\in [2,3]} Z^a[L,\bar\epsilon] $ and for all $x\in \bigcup_{a\in [2,3]} Z^a[L,\bar\epsilon] $, $u,v\in T_x \Gamma[L,\bar{\epsilon}]\subset T_x\mathbb{R}^{n+2}$:
$$g(L_{st})(u,v) = g_{eucl}(u,v) + \beta(a) L_{st}  g_{eucl}(u,V(x))g_{eucl}(v,V(x)).$$

Hence the modified metric $g(L_{st})$ is similar to the metric induced by $g_{eucl}$, but strongly twisted and streched between $Z^{2}$ and $Z^{3}$ when $L_{st}$ is large. The choice of $\beta$ garantees that when $L_{st}$ goes to infinity and $\epsilon_2$ goes to zero, the other parameters being fixed, 
\begin{itemize}
\item $(\Gamma[L,\bar{\epsilon}],g(L_{st}))$ converges locally around $Z^{2+1/12}$ to $S^2\times \mathbb{R}$ endowed with the product metric $d_0^2 h_{1} + d\theta^2$, where $h_1$ is the round metric of Gauss curvature $1$,
\item $(\Gamma[L,\bar{\epsilon}],g(L_{st}))$ converges locally around $Z^{2.5}$ to a warped product metric $g_{tw}$ on $S^2\times \mathbb{R}$ different from a product metric.
\end{itemize}
Note nevertheless that in the second limit any slice $S^2\times\{\theta\}$ is also endowed with the round metric $d_0^2 h_1$. The Ricci flow for warped product metrics on $S^2\times \mathbb{R}$ which are $\mathbb{R}$-invariant (and hence with base $S^2$) has a well-controlled behavior and was studied in \cite{LottSesum}. For a metric $g$ on a $3$-manifold, let $T_{ext}(g)\in (0,\infty)$ be its extinction time when well-defined: when it exists it is defined as the time where a trivial singularity occurs. The following is a key lemma explaining why we consider these twisted hooks. 

\begin{lemme} \label{key}
Let $g_{inv}$ be an $\mathbb{R}$-invariant warped product metric on $S^2\times \mathbb{R}$. Suppose that $g_{inv}$ is not a product metric and that any slice $S^2\times\{\theta\} \subset S^2\times \mathbb{R}$ has area $4\pi d_0^2$ computed with $g_{inv}$. Then 
$$T_{ext}(g_{inv})> T_{ext}(d_0^2 h_1 +d\theta^2).$$
\end{lemme}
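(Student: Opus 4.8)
The plan is to compare the two Ricci flows through the areas of the slices $S^2\times\{\theta\}$: I will show that this area shrinks strictly more slowly along the flow of $g_{inv}$ than along the flow of the product, and that for such a symmetric flow a trivial singularity occurs exactly when the slice area vanishes.

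First I would record the structure of the flow. Being a warped product with one-dimensional fibre, $g_{inv}$ has the form $g_{inv}=\bar g+w^2\,d\theta^2$ for a metric $\bar g$ on $S^2$ and a positive function $w$ on $S^2$, and it is invariant under the whole isometry group of $(\mathbb{R},d\theta^2)$. By uniqueness of the Ricci flow on complete manifolds of bounded geometry together with the analysis of \cite{LottSesum}, the solution $g_{inv}(t)$ stays an $\mathbb{R}$-invariant warped product $\bar g(t)+w(t)^2\,d\theta^2$, with $w(t)$ solving the heat equation $\partial_t w=\Delta_{\bar g(t)}w$ on the evolving base; by the maximum principle, $\min_{S^2}w(t)\ge\min_{S^2}w(0)>0$ throughout. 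Write $A(t)$ for the area, with respect to $g_{inv}(t)$, of a slice $S^2\times\{\theta\}$; these slices are totally geodesic, carry the induced metric $\bar g(t)$, and $A(0)=4\pi d_0^2$ by hypothesis.

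The key computation is the evolution of $A(t)$. Fixing the slice $\Sigma_\theta=S^2\times\{\theta\}$ and differentiating its $g_{inv}(t)$-area, with unit normal $\nu=w^{-1}\partial_\theta$, the first variation of area under Ricci flow gives $\frac{d}{dt}A(t)=-\int_{\Sigma_\theta}(R_{g_{inv}(t)}-\Ric_{g_{inv}(t)}(\nu,\nu))$. By O'Neill's warped-product curvature formulas (used already in the proof of Lemma \ref{properties}(3)), $R_{g_{inv}}-\Ric_{g_{inv}}(\nu,\nu)=2K_{\bar g}-\Delta_{\bar g}w/w$, hence by Gauss--Bonnet and the divergence theorem on the closed surface $(S^2,\bar g(t))$,
$$\frac{d}{dt}A(t)=-8\pi+\int_{S^2}\frac{|\nabla_{\bar g(t)}w(t)|^2}{w(t)^2}\,dA_{\bar g(t)}\ \ge\ -8\pi,$$
with equality precisely when $w(t)$ is constant. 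Because $g_{inv}$ is not a product metric, $w(0)$ is non-constant, so $\frac{d}{dt}A|_{t=0}>-8\pi$ strictly; combining this with the inequality $\frac{d}{dt}A\ge-8\pi$ valid at all times and integrating, one obtains a fixed $\eta>0$ with $A(t)\ge 4\pi d_0^2-8\pi t+\eta$ for all $t>0$ in the maximal interval of existence. The same computation applied to $d_0^2h_1+d\theta^2$ (constant $w$) gives $A(t)=4\pi d_0^2-8\pi t$, vanishing at $t=d_0^2/2$, which is its extinction time.

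Finally I would convert these area bounds into the statement on extinction times. For an $\mathbb{R}$-invariant warped product the curvature at a point of $S^2\times\mathbb{R}$ depends only on the $S^2$-coordinate, so on the compact factor a trivial singularity amounts to a uniform curvature blow-up; since $w(t)$ stays bounded away from zero, such a blow-up must come from a degeneration of the base metrics $\bar g(t)$, and by the structural results of \cite{LottSesum} for these symmetric flows this happens exactly as $A(t)\to 0$. Granting this, $g_{inv}$ cannot develop a trivial singularity while $A(t)>0$; its extinction time $T$ therefore satisfies $0=\lim_{t\to T}A(t)\ge 4\pi d_0^2-8\pi T+\eta$, i.e. $T\ge d_0^2/2+\eta/(8\pi)>T_{ext}(d_0^2h_1+d\theta^2)$, which is the claim. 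The step I expect to be the real obstacle is this last one: the area monotonicity is short and robust, but identifying the trivial singularity with the vanishing of the slice area — in particular ruling out any earlier partial degeneration over $S^2$ — is precisely what the detailed description of $\mathbb{R}$-invariant Ricci flows in \cite{LottSesum} is needed for.
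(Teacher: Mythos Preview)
Your proof is correct and follows essentially the same route as the paper: compute $\frac{d}{dt}A(t)=-8\pi+\int_{S^2}|\nabla w|^2/w^2$ via the warped-product curvature formulas and Gauss--Bonnet, use non-constancy of the warping factor to get strict inequality, and conclude by identifying the extinction time with the vanishing of the slice area (citing \cite{LottSesum}). One cosmetic point: your bound $A(t)\ge 4\pi d_0^2-8\pi t+\eta$ cannot hold for all $t>0$ (it fails at $t=0^+$), but it does hold for all $t\ge\delta$ once you integrate the strict inequality on $[0,\delta]$ and then the non-strict one afterwards, which is all you need.
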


\begin{proof}
Note that this lemma is the long-time counterpart of Lemma \ref{properties} (3), which is a first-order property. The proof is essentially the same computation.

Let $\{g_{inv}(t) \}_{t\in [0,T_{ext}(g_{inv}))}$ the maximal solution starting at $g_{inv}$. We observe that the slices $S^2\times \{\theta\}$ remain totally geodesic for all times. Hence by the Gauss equation, their area $A(t)$ evolves according to
\begin{equation} \label{invariant}
\frac{d A}{d t} = - \int_{S^2\times \{\theta\}} (R-\Ric(\nu,\nu)) = -8\pi - \int_{S^2\times \{\theta\}} \Ric(\nu,\nu),
\end{equation}
where $\nu$ is a unit normal. Suppose now that $g_{inv}(t) = k(t) + e^{2u(t)} d\theta^2$, where $k(t)$, $u(t)$ are respectively a metric and a function on $S^2$. Then according to (2.4) in \cite{LottSesum}, the integral in the RHS of (\ref{invariant}) is equal to 
$$\int_{S^2} - |\nabla u(t)|^2 dvol_{k(t)}$$
where $\nabla$ and $|.|$ are computed using $k(t)$. Since $u(0)$ is not constant by hypothesis, $u(t)$ remains so and we obtain
$\frac{d A}{d t} > - 8\pi$.
Since the analogue derivative for a product metric is equal to $-8\pi$, and since the extinction time coincide with the time when the area of the slices $S^2\times \{\theta\}$ converges to $0$, we conclude that  
$$T_{ext}(g_{inv})> T_{ext}(d_0^2 h_1 +d\theta^2).$$
\end{proof}

Heuristically, to make a singularity appear, we will choose the stretching factor $L_{st}$ very large so that there are two regions evolving locally like two $\mathbb{R}$-invariant $S^2\times \mathbb{R}$, one of them being endowed with a product metric and separating the other one from a large region (to which we glued the twisted hook). Since the previous lemma suggests that the neck $S^2\times \mathbb{R}$ with a product metric should disappear first while the other regions stay large, a non-trivial singularity should occur. Let us make this reasoning rigorous with the following lemma.

\begin{lemme} \label{thin tube}
There exists a constant $\hat{C}>0$ and a time $\hat{T}>0$ such that the following holds. Let $(N,g(t))$, $0\leq t \leq t_1$ be a solution of the Ricci flow, assume that the initial metric $g(0)$ is normalized and that $N$ is a closed oriented connected $3$-manifold. Suppose that at $t_1$, $x\in N$ is in the center of a strong $\epsilon$-neck. Suppose that the center sphere of this strong $\epsilon$-neck separates $N$ into two components $N_1$, $N_2$ such that there are $x_i\in N_i$ ($i=1,2$) with
$$R(x,t_1) > \hat{C} (1+|R(x_i, t_1)|),$$
where $R(.,t_1)$ is the scalar curvature function at time $t_1$. 
Then, along the Ricci flow starting at $(N,g(0))$, a non-trivial singularity occurs before time $t_1 + \hat{T}$. 

\end{lemme}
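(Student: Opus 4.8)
The plan is to establish two things: (i) that the flow must develop a singularity at some time $T_{\mathrm{sing}}<t_1+\hat T$, and (ii) that this singularity is non-trivial, because the curvature stays bounded near at least one of the $x_i$ all the way up to $T_{\mathrm{sing}}$. Throughout, write $Q:=R(x,t_1)$, and note that the hypothesis forces both $Q>\hat C$ (so $Q$ can be made as large as we wish by choosing $\hat C$ large) and $|R(x_i,t_1)|<Q/\hat C$.

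For (i), since $x$ sits at the center of a strong $\epsilon$-neck at time $t_1$, I would use the scale-invariant fact that such a neck persists (as a slightly degraded neck) for a definite amount of rescaled time -- equivalently, that after parabolically rescaling by $Q$ the flow near $x$ stays $C^{[1/\epsilon]}$-close to the shrinking round cylinder, which becomes extinct in rescaled time $\simeq 1$. Combined with the lower bound $\partial_t R(x,t)\ge C_1^{-1}R(x,t)^2$ from (\ref{neck derivative}) and ODE comparison, this forces $R(x,\cdot)\to\infty$ by some time $T_{\mathrm{sing}}\le t_1+c_0/Q$ for a universal constant $c_0$. One can also avoid invoking neck persistence: by (\ref{derivative estimate}) the curvature at $x$ stays at least $Q/2$ for time $\gtrsim 1/(C_2Q)$, hence $x$ stays in a canonical neighborhood; while that neighborhood is a neck or an $\mathbb{R}P^3$-cap, (\ref{neck derivative}) applies; it cannot become a $C$-component or an $\epsilon$-round component once $\hat C$ is large, since then the whole closed manifold $N$ would be that component and $R(x_i,\cdot)$ would be comparable to $R(x,\cdot)$, contradicting the bound obtained in (ii); and a $3$-ball cap still has curvature blowing up at a controlled rate. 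Setting $\hat T:=c_0/\hat C$ then gives $T_{\mathrm{sing}}<t_1+\hat T$, and this is the first singular time since the flow is smooth on $[0,t_1]$ by hypothesis.

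For (ii), it is enough to exhibit one point of $N$ at which $\limsup_{t\uparrow T_{\mathrm{sing}}}|Rm|<\infty$; I would take $p=x_1$. First I would bound $R(x_1,t)$ on $[t_1,T_{\mathrm{sing}})$: wherever $R(x_1,t)$ exceeds the canonical-neighborhood scale $r_0^{-2}$, the point $x_1$ lies in a canonical neighborhood and $|\partial_t R(x_1,t)|\le C_3 R(x_1,t)^2$ with $C_3=\max(C_1,C_2)$ by (\ref{neck derivative})--(\ref{derivative estimate}); since $R(x_1,t_1)<Q/\hat C$ and $T_{\mathrm{sing}}-t_1\le c_0/Q$, ODE comparison keeps $R(x_1,\cdot)\le\max(2r_0^{-2},\,2Q/\hat C)$ on the whole interval provided $\hat C$ is large enough -- this is where $\hat C$ is chosen largest. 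So $R(x_1,\cdot)$ remains finite (and it is bounded below by the pinching estimate $R\ge -6/(4t+1)$). Then Hamilton--Ivey pinching, applicable because $g(0)$ is normalized so that $\nu(\cdot,0)\ge -1$, bounds $X(x_1,t)=\max(-\nu(x_1,t),0)$ in terms of $R(x_1,t)$ and $t<T_{\mathrm{sing}}$, and in dimension three a two-sided bound on $R$ together with a lower bound on the smallest curvature eigenvalue bounds $|Rm|$; hence $|Rm(x_1,t)|$ stays bounded on $[t_1,T_{\mathrm{sing}})$. Therefore the singular set is a proper nonempty subset of $N$, so the singularity at $T_{\mathrm{sing}}$ is non-trivial.

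The hard part will be uniformity of the constants. The scale $r_0$ produced by Theorem \ref{canoneighbo} depends on the final time $t_1+\hat T$, hence a priori on $t_1$; in the settings where this lemma is used $t_1$ is bounded a priori, so one fixes an upper bound for $t_1$ and then chooses $\hat C$ (and thereby $\hat T=c_0/\hat C$) depending only on that bound and on the universal constants $C_1,C_2,C,\epsilon$. The second delicate point is the case analysis on which type of canonical neighborhood the moving points $x$ and $x_1$ can occupy; the cleanest organization is the neck-persistence statement for $x$ in (i) and the crude ODE bound plus the ``$C$-component or $\epsilon$-round component is impossible for large $\hat C$'' observation for $x_1$ in (ii). An alternative to the whole argument is a blow-up contradiction: starting from a sequence of putative counterexamples and rescaling at $x^k$ by $Q_k\to\infty$, a compactness argument of the type in Lemma \ref{geomconv} yields the shrinking round cylinder as geometric limit, which simultaneously forces a fast singularity there and bounded curvature at the now infinitely distant images of the $x_i^k$.
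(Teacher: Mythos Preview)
Your overall plan matches the paper's: show $R(x,\cdot)$ blows up within rescaled time $\sim 1/Q$ via (\ref{neck derivative}), while $R(x_i,\cdot)$ stays bounded on that interval via (\ref{derivative estimate}), so the singularity cannot be trivial. Your part (ii) is fine and, with the explicit appeal to Hamilton--Ivey, is in fact more detailed than what the paper writes. Your remark that $r_0$ depends on the time horizon is also correct and is handled in the paper exactly as you suggest.

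The gap is in part (i), at the sentence ``and a $3$-ball cap still has curvature blowing up at a controlled rate.'' This is not justified, and as stated it is false: in a $(C,\epsilon)$-cap diffeomorphic to a $3$-ball only the two-sided bound (\ref{derivative estimate}) applies, not the lower bound (\ref{neck derivative}), so $R(x,\cdot)$ can stall or decrease (think of the tip of a Bryant-type region translating past $x$). Notice that your argument in (i) never uses the hypothesis that the central sphere \emph{separates} $x_1$ from $x_2$; that is exactly what is missing. The paper handles this case with the same idea you already use for $C$-components and $\epsilon$-round components, adapted to caps: if $t'$ is the first time after $t_1$ at which $x$ leaves the strong-neck regime, then $(x,t')$ is in a $(C,\epsilon)$-cap, and because the central sphere at $x$ separated $x_1$ from $x_2$, one of the $x_i$ must lie in that cap. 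Since scalar curvature is comparable throughout a $(C,\epsilon)$-cap, this forces $R(x_i,t')$ to be comparable to $R(x,t')$, contradicting the bound you already derived from (\ref{derivative estimate}) once $\hat C$ is large. So the $3$-ball cap case is not a case where blow-up continues; it is a case that is excluded outright, and this is precisely where the separation hypothesis enters the proof.
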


\begin{proof}
We can suppose that $R(x,t_1)\geq r_0^{-2}$ where $r_0$ comes from the canonical neighborhood theorem (Theorem \ref{canoneighbo}). Consider the Ricci flow defined on a maximal time interval $ [0,T)$ where $t_1< T\leq \infty$; we want to show that a non-trivial singularity occurs at some $t_2$ larger than $t_1$. By definition of strong canonical neighborhoods, by (\ref{neck derivative}) and (\ref{derivative estimate}), since $\epsilon$ is small, there is a positive constant $C_1$ only depending on $\epsilon$ so that 
\begin{enumerate}
\item $\frac{\partial R(x,t)}{\partial t} \geq  R(x,t)^2/C_1$ as long as $x$ is in a strong $\epsilon$-neck,
\item either $R(x_i,t)\leq r_0^{-2}$ or $\frac{\partial R(x_i,t)}{\partial t} \leq C_2 R(x_i,t)^2$.
\end{enumerate}
The second item means that there is a time $t_3>t_1$ such that if the flow runs into a trivial singularity, then it does not occur before $t_3$. If the condition in the first item is verified as long as the classical Ricci flow is defined then $R(x,t)$ goes to infinity before a time $t_4$. Choose $\hat{C}$ large enough so that $t_4<t_3$. Suppose by contradiction that the point $x$ cease to be in a strong $\epsilon$-neck at time $t'\in[t_1,T)$ and the flow is well-defined on $[t_1,t')$. Then $(x,t')$ is in one of the following canonical neighborhoods:
\begin{itemize}
\item a $(C,\epsilon)$-cap (where in particular the scalar curvature is comparable at every point),
\item a $C$-component,
\item an $\epsilon$-round component.
\end{itemize}
Since $t'$ is the first time after $t_1$ such that $x$ is not in a strong $\epsilon$-neck, $(x,t')$ is actually in a $(C,\epsilon)$-cap. Either $x_1$ or $x_2$ is also in this cap. Now if $\hat{C}$ is large enough, then by (\ref{derivative estimate}) again each $R(x_i,t')$ cannot be comparable to $R(x,t')$ so this is a contradiction. Hence either the scalar curvature $R(x,t)$ goes to infinity before $t_4$ or a singularity happens elsewhere before $t_4$. Because $t_4<t_3$, this singularity is not trivial. By taking $\hat{T}=t_4$, it finishes the proof.

\end{proof}

\begin{proof}[Proof of Theorem \ref{appearance3}]
First, we glue a small twisted hook to $M$ around a point $p$ as in the proof of Theorem \ref{appearance2}. If $\epsilon_1$, $\epsilon_2$, $1/L$ and the size of the twisted hook are sufficiently small, then the new metric $\tilde{g}$ on $M$ does not contain any stable sphere or minimal  $\mathbb{R}P^2$ with stable oriented double cover and has positive scalar curvature. We take care of rescaling the new metric so that it becomes normalized. Let $(M_k,g_k)$ be a sequence of such rescalings, where the parameters $\epsilon_1$, $\epsilon_2$, $1/L$ and the size of the hook go to $0$. It is also possible to guarantee that for any sequence $\mathfrak{s}=\{x_k\}$ with $x_k\in M_k$, the based manifolds $(M_k,g_k,x_k)$ converge to one of the following geometric limits:

\begin{enumerate}[label=(\alph*)]
\item the flat $\mathbb{R}^3$ (corresponding to points $x_k$ not near the hook),
\item a rotationally symmetric non-compact $3$-manifold with two ends, one being a standard product metric on $S^2\times [0,\infty)$ with scalar curvature $1$ and the other one being a flat $\mathbb{R}^3\backslash B(0,1)$ (correspond to $x_k$ near the part where the hook is glued),
\item a warped product on $S^2\times \mathbb{R}$ with base a round $S^2$ with scalar curvature $1$ (corresponding to $x_k$ inside the hook far from the tip),
\item an $\mathbb{R}^3$ endowed with the standard initial metric (see \cite[Chapter 12]{MorganTian}) (corresponding to $x_k$ near the tip of the hook).
\end{enumerate}
Let $T$ be the maximum of the maximal times for which the Ricci flows starting at one of these four metrics are smoothly defined. By \cite{ChenZhu} and \cite{Shinoncompact}, the hypotheses of Lemma \ref{geomconv} are satisfied. Notice that if $x_k\in Z^{2+1/12}$ (resp. $Z^{2.5}$) for all $k$ then the geometric limit is a product metric (resp. non-trivial warped product metric) on $S^2\times \mathbb{R}$, whose life span under the Ricci flow is equal (resp. strictly longer) than that of the standard initial metric by \cite[Theorem 12.5]{MorganTian} (resp. Lemma \ref{key}).

Suppose by contradiction that no non-trivial singularity occurs along the Ricci flow starting at $(M,\tilde{g})$.  Two cases are a priori possible: $T=1$ the life span of the standard initial metric, or $T<1$. The latter situation corresponds to the maximum of the scalar curvature being reached around the gluing part near $T$, namely it means that $T$ is the maximal existence time for the second geometric limit in the previous list. Note that this Ricci flow being rotationally symmetric with two ends, the only canonical neighborhood that can appear is a strong $\epsilon$-neck. By the above remarks and Lemma \ref{geomconv}, in both cases one finds $\delta>0$ so that for all $k$ large, the Ricci flows $(M_k,g_k(t))$ have no singularity until at least $\hat{t} := T-\delta$, time at which for some $q,q_1\in M_k$, and for any $q_2\in Z^{2.5}$:
\begin{itemize}
\item for $i=1,2$, $R(q,\hat{t}) > \hat{C} (1+R(q_i,\hat{t}))$ ($\hat{C}$ being the constant in Lemma \ref{thin tube}),
\item $q$ is in a strong $\epsilon$-neck whose central sphere separates $q_1$, $q_2$.
\end{itemize}
Actually, $q_1$ is chosen to be a point of $M$ far from $p$ where the gluying is realized in the original metric $g$. The hypothesis of Lemma \ref{thin tube} are satisfied and a non-trivial singularity occurs, which contradicts our assumption that only a trivial singularity occurs.

\end{proof}

\begin{remarque}
\begin{enumerate}
\item In the proof of Theorem \ref{appearance2}, we used hooks with a strectching factor $L_{st}=0$ for simplicity. However, it is not difficult to check that $\bar{\epsilon}$ and $L$ can be chosen so that for any stretching factor $L_{st}$, Lemma \ref{appears} remains true. Hence putting Theorems \ref{appearance2} and \ref{appearance3} together, we conclude that there are $3$-manifolds with positive scalar curvature such that along the Ricci flow, a stable sphere appears and some time later, a non-trivial singularity occurs.
\item Although according to Theorem \ref{appearance3}, a non-trivial singularity occurs in certain cases, it does not provide information on where it happens: intuitively one expects the singularity to occur at the neck with a product metric or at the tip of the twisted hook or at both places, depending on the shape of the tip.
\end{enumerate}
\end{remarque}

\section{Stable spheres and Type I singularities} \label{type I}

In \cite{AngenentKnopf}, examples of rotationally symmetric $S^{n+1}$ developing a Type I neckpinching are constructed. Actually in dimension $3$, this is part of a much more general fact. By joining any two oriented $3$-manifolds with a thin neck, we obtain an initial data which will produce a non-trivial Type I singularity under the Ricci flow. 

\begin{prop}\label{singularity neck}
Let $(M_1,g_1)$, $(M_2,g_2)$ be two closed oriented $3$-manifolds. For any pair of points $p_i\in M_i$ ($i=1,2$), radius $\hat{r}>0$ small enough, length $l\geq0$ and $\delta>0$, there exists a metric $g$ on the connected sum $M=M_1\#M_2$ such that:
\begin{enumerate}
\item there is a subset $N\subset M$ diffeomorphic to $S^2\times (0,1)$ so that $M\backslash N$ is isometric to $(M_1\backslash B(p_1,\hat{r})) \cup(M_2\backslash B(p_2,\hat{r}))$,
\item $M$ is $\delta$-close in the Hausdorff-Gromov distance to the union of $M_1$, $M_2$ and a curve of length $l$ joining $p_1$ to $p_2$, 
\item a non-trivial singularity of Type I occurs along the Ricci flow starting at $(M,g)$.
\end{enumerate}
\end{prop}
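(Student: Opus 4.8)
The plan is to realize $g$ by joining $M_1\setminus B(p_1,\hat r)$ and $M_2\setminus B(p_2,\hat r)$ through a very thin round tube, and then to repeat the proof of Theorem~\ref{appearance3} with the twisted hook replaced by this tube, adding one further argument to get Type~I. Concretely, I fix a small $\rho>0$ and build $g=g_\rho$ on $M=M_1\#M_2$ so that $N\cong S^2\times(0,1)$ is the union of: a rotationally symmetric collar joining $\partial B_{g_1}(p_1,\hat r)$ --- which, for $\hat r$ small, is as close as we want to a round sphere --- to a round cylinder of radius $\rho$; a round cylinder $S^2(\rho)\times[0,l]$; and a collar from that cylinder back to $\partial B_{g_2}(p_2,\hat r)$. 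The collars are taken with monotone radius profile and with length small but negligible compared to $\rho$ (when $l=0$, so $N$ must also be short, one lets this length tend to $0$ slowly enough, e.g.\ $\sim\rho^{3/4}$, which keeps the collar curvature $\ll\rho^{-2}$). By construction $M\setminus N$ is isometric to $(M_1\setminus B(p_1,\hat r))\cup(M_2\setminus B(p_2,\hat r))$ (property~(1)), and for $\rho,\hat r$ small $(M,g_\rho)$ is $\delta$-close in the Gromov--Hausdorff distance to $M_1\cup M_2$ joined by a segment of length $l$ (property~(2)). Finally I rescale so that $g_\rho$ is normalized, which alters neither the type nor the (non-)triviality of a singularity; after this the round part of $N$ is a cylinder of scalar curvature $\approx 1$ and $M_1\setminus B$, $M_2\setminus B$ carry uniformly small curvature.

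Next, as in the proof of Theorem~\ref{appearance3}, I take $\rho_k\to 0$, $g_k:=g_{\rho_k}$. For any sequence of base points $x_k\in M$ the pointed manifolds $(M,g_k,x_k)$ subconverge to one of: flat $\mathbb{R}^3$; the round cylinder $S^2\times\mathbb{R}$ of scalar curvature $1$ (base point in the central cylinder, $l>0$); or the rotationally symmetric manifold with one standard cylindrical end $S^2\times[0,\infty)$ and one flat end $\mathbb{R}^3\setminus B$ (base point where a collar meets the tube; for $l=0$ this is replaced by a rotationally symmetric neckpinch metric of Angenent--Knopf type). By \cite{ChenZhu} and \cite{Shinoncompact} the hypotheses of Lemma~\ref{geomconv} hold with $T$ equal to the maximal existence time $T_c$ of the round cylinder flow: the cylindrical ends of all these limits shrink like the round cylinder, so the limit flows are smooth with uniformly bounded curvature on $[0,T_c-\eta]$ for every $\eta>0$. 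Hence for $k$ large $(M,g_k(t))$ is smooth on $[0,T_c-\eta]$, and at $t=T_c-\eta$ the central sphere of the tube sits in a strong $\epsilon$-neck with scalar curvature of order $\eta^{-1}$, separating $M$ into two components containing the (now tiny) curvature of $M_1\setminus B$ and $M_2\setminus B$; for $\eta$ small this is exactly the hypothesis of Lemma~\ref{thin tube}, so a non-trivial singularity occurs at some finite time $\le T_c-\eta+\hat T$. Undoing the normalization, this singular time tends to $0$ as $k\to\infty$; fixing $\rho:=\rho_k$ for $k$ large and $g:=g_\rho$ yields (1), (2) and a non-trivial singularity at a finite time $T_\rho$. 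This part is the proof of Theorem~\ref{appearance3} verbatim with the tube in place of the hook.

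It remains to upgrade ``non-trivial'' to ``non-trivial of Type~I''. Since $T_\rho<\infty$, a singularity that is not of Type~II is automatically of Type~I, so by Ding's characterization \cite{YDing} recalled above it suffices to exclude every sequence $(x_j,t_j)$ with $t_j\to T_\rho$, $R(x_j,t_j)\to\infty$, and $(x_j,t_j)$ contained in a $(C,\epsilon)$-cap diffeomorphic to a $3$-ball. Because $T_\rho\to 0$, the pieces $M_1\setminus B$, $M_2\setminus B$ keep bounded curvature up to $T_\rho$, so for $j$ large $x_j$ lies in the thin part of $N$, where by the geometric convergence above the rescaled flow is, near the extinction time $T_c$, close to one of the rotationally symmetric models of the previous paragraph. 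None of these carries a $3$-ball $(C,\epsilon)$-cap among its high-curvature points: the round cylinder is a strong $\epsilon$-neck everywhere, while for the trumpet-type and neckpinch-type limits the rotationally symmetric Ricci flow keeps, respectively, the radius profile monotone and a single nondegenerate waist, by the maximum principle, so their only high-curvature region is a strong $\epsilon$-neck along the pinching sphere (any $\mathbb{R}P^3$-type cap has a neck as double cover). Thus $(x_j,t_j)$ lies in a strong $\epsilon$-neck for $j$ large, the singularity is not of Type~II, and property~(3) follows. I expect this last step --- excluding a secondary ``$3$-ball cap'' pinch by controlling the geometry of the two collars up to the singular time --- to be the main obstacle: it combines the rotationally symmetric Ricci flow analysis of Angenent--Knopf for the trumpet and neckpinch limits with an almost-finite-propagation input (Perelman's pseudolocality, or the geometric convergence of Lemma~\ref{geomconv}) that prevents the non-symmetric ends $M_i$ from affecting the tube before the pinching time of order $\rho^2$.
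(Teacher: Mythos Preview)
Your construction and the argument for a non-trivial singularity (the first two paragraphs) match the paper's proof essentially verbatim: thin neck, rescale to normalize, identify the three pointed limits (flat $\mathbb{R}^3$, round cylinder, the two-ended rotationally symmetric ``trumpet'' of type (b)), invoke Lemma~\ref{geomconv}, then Lemma~\ref{thin tube}. That part is fine.

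The Type~I upgrade has a genuine gap. You correctly invoke Ding's criterion and correctly observe that the rotationally symmetric limit flows admit no $3$-ball $(C,\epsilon)$-caps. The problem is the transfer: Lemma~\ref{geomconv} only gives you geometric convergence of $(M,g_k(t))$ on $[0,T_c)$, while the singular time $T_k$ is a priori only known to lie in $(T_c-\eta, T_c-\eta+\hat T)$. Your sentence ``by the geometric convergence above the rescaled flow is, near the extinction time $T_c$, close to one of the rotationally symmetric models'' is exactly what is not yet justified at the times $t_j\to T_k$ where the putative caps sit. Pseudolocality would control the low-curvature ends, but it does not by itself tell you that the high-curvature tube region stays close to a rotationally symmetric flow past $T_c$; that is precisely the issue you flag as ``the main obstacle'' without resolving.

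The paper closes this gap by a short bootstrap that you are missing: instead of working at the singular time, take a hypothetical sequence of counterexamples $(M,\tilde h_{k(l)})$ and let $t_l$ (resp.\ $s_l$) be the \emph{infimum} of times at which a $(C,\epsilon)$-cap of scalar curvature $\ge 2r_0^{-2}$ (resp.\ $\ge r_0^{-2}$) appears. By the derivative estimate (\ref{derivative estimate}) one gets $s_\infty<t_\infty$. On $[0,t_l)$ the only high-curvature canonical neighborhoods are $\epsilon$-necks, so (\ref{neck derivative}) gives a \emph{uniform} curvature bound on $[0,(s_l+t_l)/2]$. This is what lets you feed Lemma~\ref{geomconv} back in past the naive barrier $T_c$: the pointed flows $(M,\tilde h_{k(l)}(t),p_l)$ with $p_l$ in a cap at time $s_l$ converge on $[0,(s_\infty+t_\infty)/2)$ to a rotationally symmetric two-ended limit, which therefore carries a $(C,\epsilon)$-cap at time $s_\infty$ --- a contradiction. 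No Angenent--Knopf monotonicity or pseudolocality is needed; the whole point is that ``no caps yet'' plus the neck derivative estimate self-improves the time interval on which Lemma~\ref{geomconv} applies.
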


\begin{proof}
As previously the proof is a limiting argument. We can glue an arbitrarily thin neck joining $M_1$ and $M_2$ so that $M$ is $\delta$-close in the Hausdorff-Gromov distance to the union of $M_1$, $M_2$ and a curve of length $l$ joining $p_1$ to $p_2$. We can ensure that this gluying is done locally around $p_i$, which does not affect the original metric in $(M_1\backslash B(p_1,\hat{r})) \cup(M_2\backslash B(p_2,\hat{r}))$. Let $h_k$ be a sequence of metrics corresponding to thinner and thinner such necks. Let $Q_k$ be the maximum of the scalar curvature on $(M,h_k)$, that we assume is achieved at the middle of the neck. Denote by $\tilde{h}_k$ the rescaling $Q_k h_k$, and let $\tilde{h}_k(t)$, $0\leq t\leq T_k$, be a maximal solution for the Ricci flow. We choose the sequence of metrics so that for any sequence of points $x_k\in M$, the rescalings $(M,\tilde{h}_k(0),x_k)$ subsequently converge geometrically to either a flat $\mathbb{R}^3$, or a product metric on $S^2\times \mathbb{R}$ or a limit of type $(b)$ described in the proof of Theorem \ref{appearance3}.  Then by Lemma \ref{geomconv}, for $k$ large, there is a point $x$ which was in the neck at time $0$, is in a strong $\epsilon$-neck with arbitrarily large scalar curvature (in particular at least $r_0^{-2}$) at a certain time $t'$ independent of $k$. Notice that the rescalings at points in $(M_1\backslash B(p_1,\hat{r})) \cup(M_2\backslash B(p_2,\hat{r}))$ converge geometrically to a static flat $\mathbb{R}^3$. Hence by Lemma \ref{thin tube}, for every large $k$ a non-trivial singularity occurs at time $T_k\in(t',t'+\hat{T})$.

\textbf{Claim:} If $k$ is large enough a $(C,\epsilon)$-cap with scalar curvature at least $2r_0^{-2}$                  cannot appear during the Ricci flow $(M,\tilde{h}_k(t))$, $0\leq t\leq T_k$. 

Suppose the claim to be true, then the singularity is of Type I according to \cite{YDing}. The theorem is thus proved modulo the claim.

To verify the claim, let us consider a sequence $\{(M,\tilde{h}_{k(l)}(t))\}_l$ of counterexamples. For each $l$, let $t_l$ (resp. $s_l$) be the infimum of the times at which there is a $(C,\epsilon)$-cap with scalar curvature at least $2r_0^{-2}$ (resp. $r_0^{-2}$), for the metric $\tilde{h}_{k(l)}(t_l)$. We can suppose that $T_{k(l)}$ (resp. $t_{k(l)}$, $s_l$) converges to $T_\infty$ (resp. $t_\infty$, $s_\infty$). Actually we have $s_\infty < t_\infty\leq T_\infty$. Indeed note that the only kinds of canonical neighborhoods with large scalar curvature that can appear are $\epsilon$-necks which diffeomorphic $S^2\times(0,1)$ and $(C,\epsilon)$-caps which are diffeomorphic to a ball or $\mathbb{R}P^3$ minus a point. For this reason, there is a $(C,\epsilon)$-cap at time $t_l$ with scalar curvature at least $2 r_0^{-2}$ and by tracking this region we can go back in time to find a $(C,\epsilon)$-cap with scalar curvature at least $ r_0^{-2}$ at time $t_l-\delta$. In view of the derivative estimate (\ref{derivative estimate}) this delta can be chosen independent of $l$, and we get $s_\infty < t_\infty$ as desired. Next we pick $p_l$ a point in a $(C,\epsilon)$-caps at time $s_l$. By definition of $t_l$ and by (\ref{neck derivative}), the curvature tensor is uniformly bounded on $[0,(s_l+t_l)/2]$. Recall that the pointed zero time slices $(M,\tilde{h}_{k(l)}(0),p_l)$ converge to a limit $(M_\infty, \tilde{h}_\infty(0),p_\infty)$ with bounded curvature so the flow starting at this limit exists and is unique \cite{Shinoncompact} \cite{ChenZhu}. Let $S>0$ such that $(M_\infty, \tilde{h}_\infty(t),p_\infty)$ is maximally defined on $[0,S)$. By construction this limit flow is rotationally symmetric non-compact with two ends when non-flat, so the only canonical neighborhoods with large curvature which could appear are strong $\epsilon$-necks. Actually by Lemma \ref{geomconv}, $S\geq \frac{s_\infty+t_\infty}{2}$. Indeed otherwise for $l$ large and $t''$ close to $S$ there should be an arbitrarily thin $\epsilon$-neck for the metric $\tilde{h}_{k(l)}(t'')$ but then (\ref{neck derivative}) and (\ref{derivative estimate}) would contradict $\frac{s_{l}+t_{l}}{2}>S$. So $S\geq \frac{s_\infty+t_\infty}{2}$ and by Lemma \ref{geomconv} again, $(M_\infty, \tilde{h}_\infty(s_\infty))$ should then contain a $(C,\epsilon)$-cap, which is impossible and our claim is proved. 

\end{proof}

From the proof of Proposition \ref{singularity neck}, it can be shown for the examples where a Type I singularity appears at some time $t_1$ that for all $t\in[0,t_1)$ there is an embedded stable minimal sphere $S(t)$ whose area goes to $0$ as $t\to t_1$. These spheres correspond to the neckpinching. One can wonder if this is a general phenomenon. The next theorem confirms that indeed small stable spheres or $\mathbb{R}P^2$ with stable oriented double cover are closely related to Type I singularities. When a minimal surface is an embedded stable sphere or an embedded $\mathbb{R}P^2$ with stable oriented double cover, we will call it a \textit{stable immersed sphere with embedded image}.

\begin{theo} \label{spherestypeI}
Let $M$ be an oriented closed connected $3$-manifold. Consider a Ricci flow $(M,g(t))$, $0\leq t < T$ and suppose that there is a non-trivial Type I singularity at time $T$. Then for all time $t$ close to $T$, $(M,g(t))$ contains a stable immersed sphere with embedded image $S(t)$ such that 
$$C'(T-t) \leq \mathcal{H}^2(S(t)) \leq C''(T-t),$$
where $C'$, $C''$ are constants independent of $t$.

Conversely, suppose that there is a sequence of times $s_k$ converging to $T$ and a sequence of stable immersed spheres with embedded image $S_k$ in $(M,g(s_k))$. Suppose also that the area of $S_k$ goes to zero and
$$\mathcal{H}^2(S_k) \geq C'(T-s_k),$$
where $C'$ is a constant independent of $k$. Then there is a singularity at time $T$ and it is locally of Type I in the following sense: 
\begin{align*}
& \forall A>0, \exists \bar{C}=\bar{C}(C',A,C,\epsilon),\forall k >\bar{C}, \\
&  \sup\{|Rm(x,s_k)| ; (\max_{S_k}R).d(x,S_k)^2 \leq A\} \leq \frac{\bar{C}}{T-s_k}.
\end{align*}
\end{theo}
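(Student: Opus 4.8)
The plan is to exploit the local structure theory of the Ricci flow near a singular time together with the min-max construction of Lemma \ref{appears}, applied inside a canonical neighborhood. For the \emph{direct implication}, suppose a non-trivial Type I singularity occurs at $T$. Fix $t$ close to $T$ and rescale by $\lambda(t)=\max_M R_{g(t)}$; the Type I bound gives $\lambda(t)\sim c/(T-t)$, and because the singularity is non-trivial there is, by the canonical neighborhood theorem (Theorem \ref{canoneighbo}) and Ding's characterization \cite{YDing}, a point $x_t$ where $R$ is comparable to $\lambda(t)$ lying in a strong $\epsilon$-neck (the $(C,\epsilon)$-cap of ball type is excluded up to a uniform factor since the singularity is non-trivial, and the $\mathbb RP^3$-minus-a-point cap is doubly covered by a neck). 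The central $2$-sphere $\Sigma_t$ of that neck is, after rescaling, $\epsilon$-close in $C^{[1/\epsilon]}$ to a round unit $2$-sphere in a round cylinder, so it is very nearly minimal and has curvature of the ambient metric sufficiently pinched that the second variation is positive; alternatively one runs the elementary min-max argument of Lemma \ref{appears} (and Remark \ref{dimension 1} in the surface case is irrelevant here since $n=2$) inside the neck region to produce an honest embedded minimal sphere, then minimizes in a complementary region to obtain a stable one. Either way one gets $S(t)$ with $\mathcal H^2(S(t))$ comparable to $4\pi/\lambda(t)$, hence to $T-t$, which is the two-sided estimate claimed.

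For the \emph{converse}, suppose $S_k$ are stable immersed spheres with embedded image in $(M,g(s_k))$, $s_k\to T$, with $\mathcal H^2(S_k)\to 0$ and $\mathcal H^2(S_k)\ge C'(T-s_k)$. First, the existence of a stable minimal sphere of small area forces the scalar curvature to be large somewhere on $S_k$: by the stability inequality (test with the constant) and the Gauss equation, $\int_{S_k}(R-\Ric(\nu,\nu)+|A|^2)\le 2\int_{S_k}K = 8\pi$ (for the two-sphere; the $\mathbb RP^2$ case is handled on the oriented double cover, which has area $\le 2\mathcal H^2(S_k)\to 0$), so $\max_{S_k}R\gtrsim 1/\mathcal H^2(S_k)\to\infty$, and this already shows a singularity forms at some $T'\le T$; the lower bound $\mathcal H^2(S_k)\ge C'(T-s_k)$ pins $T'=T$. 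Second, to get the local Type I bound, rescale by $Q_k:=\max_{S_k}R$ so that $Q_k(T-s_k)$ is bounded above; on the rescaled flow the points of $S_k$ have $R\sim 1$, hence have $(C,\epsilon)$-canonical neighborhoods once $Q_k\ge r_0^{-2}$, and the presence of a small stable sphere through such a point rules out the cap-of-ball-type and round-component alternatives, so $S_k$ sits in a strong $\epsilon$-neck; the neck structure then gives a bound on $R$ (hence on $|Rm|$, by pinching, Theorem on Ivey--Hamilton pinching) on the whole region $\{d(x,S_k)^2\le A/Q_k\}$ in terms of $C,\epsilon,A$ only, which after undoing the rescaling is exactly the asserted inequality with $\bar C=\bar C(C',A,C,\epsilon)$.

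The main obstacle I expect is the production of the stable sphere in the direct implication with \emph{area comparable to $T-t$ from both sides simultaneously}, uniformly in $t$: producing \emph{some} minimal sphere in the neck is routine, but controlling it to have area within a fixed factor of the cylinder cross-section — rather than, say, an area that degenerates faster along the flow because the min-max sphere slides toward the cap — requires care. I would handle this by working in the rescaled picture, where the neck is $C^{[1/\epsilon]}$-close to a genuine round cylinder: in the cylinder the only closed minimal hypersurfaces are the cross-sectional round spheres (of area exactly $\omega_2$), so by the $\epsilon$-closeness and standard compactness/regularity for stable minimal hypersurfaces (Schoen--Simon curvature estimates, as used in the proof of Theorem \ref{appearance2}) any stable minimal sphere produced inside the neck has area in $[(1-o(1))\omega_2,(1+o(1))\omega_2]$, and then rescaling back gives the two-sided linear bound. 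A secondary technical point is the uniform separation of the strong-$\epsilon$-neck case from the cap-of-ball case in the converse; this is where the hypothesis that $S_k$ is a \emph{stable} sphere (or $\mathbb RP^2$ with stable double cover) of small area is essential, since such a surface cannot be contained in a $(C,\epsilon)$-cap diffeomorphic to a ball — there its complement on the core side would be a stable minimal sphere bounding a ball-like region with uniformly pinched positive curvature, contradicting stability — so one falls into the neck (or $\mathbb RP^3$-cap, doubly covered by a neck) case and the quantitative neck estimates apply.
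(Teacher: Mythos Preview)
Your overall architecture is close to the paper's, but there is a genuine gap in the converse direction, and a smaller misstep in the direct one.

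\textbf{Converse: the stability inequality is used in the wrong direction.} From stability with $\varphi\equiv 1$ and the Gauss equation one gets
\[
\int_{S_k}\bigl(R+|A|^2\bigr)\le 2\int_{S_k}K_{S_k}=8\pi,
\]
which is an \emph{upper} bound on $\int_{S_k}R$. This yields $(\min_{S_k}R)\cdot\mathcal H^2(S_k)\le 8\pi$, i.e.\ an upper bound on $\min_{S_k}R$; it does \emph{not} give $\max_{S_k}R\gtrsim 1/\mathcal H^2(S_k)$. So your derivation that the scalar curvature blows up on $S_k$ from stability alone does not go through, and with it the step ``$Q_k(T-s_k)$ is bounded'' is unjustified. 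The paper closes this gap by a completely different mechanism: let $p_k\in S_k$ realize $\min_{S_k}R$; if $R(p_k,s_k)$ stayed bounded, then Perelman's $\kappa$-noncollapsing (Morgan--Tian, Theorem 11.19) gives a lower volume bound on a ball around $p_k$ at a definite scale, and the monotonicity formula for minimal surfaces then forces $\mathcal H^2(S_k)$ to be bounded below, contradicting $\mathcal H^2(S_k)\to 0$. Once $\min_{S_k}R\to\infty$, the paper uses the area upper bound of \cite[Proposition~A.1]{MaNe} and Schoen's curvature estimates to conclude that $S_k$ lies entirely in a strong $\epsilon$-neck or an $\mathbb RP^3$-minus-a-point cap; then the neck geometry gives $\mathcal H^2(S_k)\cdot(\max_{S_k}R)\le C''$, so the hypothesis $\mathcal H^2(S_k)\ge C'(T-s_k)$ yields $\max_{S_k}R\le C''/(C'(T-s_k))$, and ``bounded curvature at bounded distance'' (Morgan--Tian, Chapter~10) finishes. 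Your ad~hoc exclusion of the ball-type cap via positive curvature is plausible but not what carries the argument; the quantitative control comes from the noncollapsing/monotonicity step you are missing.

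\textbf{Direct implication: the central sphere of a neck is not strictly stable, and Lemma~\ref{appears} is not the right tool.} In the model cylinder $S^2\times\mathbb R$ the cross-section is totally geodesic with $\Ric(\nu,\nu)=0$, so the Jacobi operator is $\Delta_{S^2}$ and the constant function is a Jacobi field: the second variation is nonnegative, not positive, and an $\epsilon$-perturbation need not make it strictly stable. Invoking Lemma~\ref{appears} is also off-target: that lemma is specific to the hook geometry. The paper instead argues globally: since the singularity is non-trivial and Type~I, Ding's characterization forces the high-curvature region (for $t$ near $T$) to consist only of strong $\epsilon$-necks and $\mathbb RP^3$-minus-a-point caps, so either $M$ is $S^2\times S^1$ or $\mathbb RP^3\#\mathbb RP^3$, or there is an $\epsilon$-tube (possibly capped) with boundary of controlled curvature. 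In either case one produces a \emph{stable} sphere by $\gamma$-reduction \`a la Meeks--Simon--Yau \cite{MSY}: minimize area in the isotopy class of a central sphere of the tube (which is homologically nontrivial there, so the area cannot go to zero), using the mean-convex ends as barriers. Choosing $S(t)$ of \emph{least} area among stable immersed spheres with embedded image then forces $S(t)$ to sit where the curvature is comparable to $\max_M R$, and the Type~I two-sided bound $\max_M R\sim (T-t)^{-1}$ gives both inequalities on $\mathcal H^2(S(t))$. Your fallback ``produce a minimal sphere and then minimize in a complementary region'' is morally the same minimization step, but note it needs the separating/tube structure (it would fail as stated in $S^2\times S^1$), which is why the paper handles the two topological cases separately and appeals to $\gamma$-reduction rather than a bare area minimization in a domain.
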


\begin{proof}
Without loss of generality we assume $(M,g(0))$ to be normalized. Suppose that $(M,g(t))$, $0\leq t < T$, develops a Type I singularity at $T$. Since the singularity is non-trivial, for all times close to $T$, say for $t\in (t_0,T)$, there is a constant $A>0$ independent of $t$ such that the points of scalar curvature larger than $A$ are in strong $\epsilon$-necks or in $(C,\epsilon)$-caps diffeomorphic to $\mathbb{R}P^3$ minus a point according to \cite{YDing}. By \cite[Proposition A.21]{MorganTian}, this means that at time $t\in[t_0,T)$ two situations can happen:
\begin{itemize}
\item $M$ is covered by the previous canonical neighborhoods and is diffeomorphic to $S^2\times S^1$ or $\mathbb{R}P^3\# \mathbb{R}P^3$. The existence of a stable immersed sphere with embedded image $S(t)$ is obtained by $\gamma$-reduction \cite{MSY}, furthermore $S(t)$ has area going to zero as $t$ goes to $T $,
\item $(M,g(t))$ contains $T(t)$, an $\epsilon$-tube or a $C$-capped $\epsilon$-tube (diffeomorphic to $\mathbb{R}P^3$ minus a point), whose curvature at the end(s) is at most $A$ but which contains points whose scalar curvature goes to infinity as $t$ approaches $T$. 
\end{itemize}
In the second case, choose a sphere $Z(t)$ in $T(t)$, which is the central sphere of a strong $\epsilon$-neck and with area going to zero if $t$ is close to $T$. We can try to minimize its area in $T(t)$ because the boundary component(s) of $T(t)$ have large area in comparison. Actually $Z(t)$ is homologically non-trivial in $T(t)$ and one cannot reduce its area to zero by isotopies. By deforming slightly the boundaries of $T(t)$ to make them strictly mean-convex, we can use $\gamma$-reduction again to find a stable immersed sphere with embedded image $S(t)$. For times close to $T$, this minimal surface is far from the boundaries where we deformed the metric by the monotonicity formula and the geometry of the necks, so it is in fact minimal for the original metric $g(t)$ and 
$$\mathcal{H}^2(S(t)) \leq \mathcal{H}^2(Z(t)) \to 0  \text{  as  } t \to T.$$
Choose $S(t)$ to be of least area among stable immersed spheres with embedded image at time $t$ close to $T$. Notice that the scalar curvature on $S(t)$ is comparable everywhere to the maximum of the scalar curvature on $(M,g(t))$ by the choice of $S(t)$ and the canonical neighborhood theorem. But it is known that for a Type I singularity the scalar curvature blows up in $\frac{1}{T-t}$ hence the area of $S(t)$ decreases to zero linearly and the first part of the theorem is proved.

For the second part, we can argue as follows. Let $p_k\in S_k$ a point where the scalar curvature achieves its minimum on $S_k$, then by \cite[Theorem 11.19]{MorganTian} and the monotonicity formula, since the area of $S_k$ converges to $0$, $R(p_k,s_k)$ goes to infinity. By the area upper bound (depending on the scalar curvature) (\cite[Proposition A.1]{MaNe}) and curvature bound for stable spheres \cite{Sc}, by the classification of canonical neighborhoods and their properties, for $k$ large, $S_k$ has to be a sphere or $\mathbb{R}P^2$ entirely contained in a strong $\epsilon$-neck or in a $(C,\epsilon)$-cap diffeomorphic to $\mathbb{R}P^3$ minus a point. The area bound from below for $S_k$ implies that the scalar curvature on $S_k$ is smaller than $C''/(T-s_k)$ for a certain constant $C''$. The conclusion now follows from the "bounded curvature at bounded distance" property (\cite[Chapter 10]{MorganTian}).

\end{proof}

\begin{remarque} \label{minimum de R}
From the proof of the previous theorem, it becomes clear that when there is a sequence of stable immersed spheres with embedded image $S_k$ at times $s_k$ going to $T$, with area converging to $0$, then the minimum of the scalar curvature on these spheres, $\min_{S_k} R$, goes to infinity and $\max_{S_k}R/\min_{S_k} R$ is bounded.
\end{remarque}

\section{Symmetry and non-appearance of stable spheres}
In this section, we study under which symmetry assumptions one can rule out the appearance of stable immersed spheres with embedded image along the Ricci flow. In the case of a finite group $G$ acting effectively by isometries on a $3$-manifold, there is a point $p$ which is fixed only by the identity and one can glue disjoint thin hooks at the images of $p$ under the elements of $G$, in an equivariant way. This gives a $G$-invariant metric for which stable spheres appear along the Ricci flow. Hence, we will only focus on positive dimensional compact Lie groups. Consider $(M,g)$, an oriented connected closed $3$-manifold on which a $d$-dimensional compact Lie group $G$ of isometries acts effectively. Assume that 
\begin{enumerate} 
\item either $d>1$, 
\item or $d=1$ and the action is free.
\end{enumerate}
We will say that $(M,g)$ (as above) is rotationally symmetric if a subgroup $G_0$ of $G$ is isomorphic to $SO(3)$ and there is a $G_0$ invariant $2$-sphere or $\mathbb{R}P^2$ embedded in $(M,g)$. This amounts to saying that a cover of $(M,g)$ is a warped product $I\times S^2$ with fiber $S^2$, where $I=\mathbb{R}$ or $I=[0,1]$ (the warped product is then degenerate at $0$, $1$). In that case, $M$ is diffeomorphic to $S^3$, $\mathbb{R}P^3$, $\mathbb{R}P^3 \# \mathbb{R}P^3$ or $S^2\times S^1$.

\begin{theo} \label{symmetry}
Let $(M,g)$ be as above. Suppose that it contains no stable immersed spheres with embedded image. Then, along the Ricci flow starting at $(M,g)$, stable immersed spheres with embedded image cannot appear.
\end{theo}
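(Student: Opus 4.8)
The plan is to combine a static rigidity fact --- in a $G$-invariant metric any stable immersed sphere with embedded image is itself invariant under the identity component $G^\circ$ --- with the reduction this forces onto the rotationally symmetric situation, where the assertion becomes a Sturmian zero-counting statement for the warping function. Throughout one uses that the Ricci flow preserves isometries, so $G$ acts isometrically on $(M,g(t))$ for every $t$; the point ``if there were none at the beginning'' will enter only through the initial shape of the profile $\phi$.

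\emph{Rigidity.} Fix $t$, let $X$ be a Killing field on $(M,g(t))$ generating a circle subgroup of $G^\circ$, and let $S$ be an embedded stable minimal sphere. Since $S^2$ is two-sided in the oriented $M$, $u:=\langle X,\nu\rangle$ is a well-defined Jacobi field on $S$. It cannot change sign: its Rayleigh quotient for the stability operator is $0$, so if the first eigenvalue $\lambda_1$ is positive this is impossible, while if $\lambda_1=0$ then $u$ is a first eigenfunction and hence of constant sign; and a non-negative, not identically zero Jacobi field is strictly positive by the strong maximum principle. But if $u>0$ on $S$, then the closed orbit of $X$ through any point of $S$ leaves $S$ transversally on its positive side and, being a closed curve, must recross $S$ from the positive to the negative side, producing a point of $S$ where $\langle X,\nu\rangle<0$ --- a contradiction. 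Hence $u\equiv 0$, i.e.\ $X$ is tangent to $S$. For an embedded $\mathbb{R}P^2$ with stable oriented double cover one runs the same argument on the double cover $\tilde S\cong S^2$, where $\langle X,\tilde\nu\rangle$ is odd under the deck involution and hence sign-changing unless identically zero. Since the periodic elements are dense in $\mathfrak g$ (every element is $\mathrm{Ad}$-conjugate into a maximal torus, in which the periodic elements are dense) and ``tangent to $S$'' is a closed condition, all of $\mathfrak g$ is tangent to $S$, so $S$ is $G^\circ$-invariant.

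\emph{Reduction.} Suppose some $(M,g(t_*))$ contains such a surface $S$; by the above it is $G^\circ$-invariant, and $G^\circ$ acts effectively on $S$ by isometries (effectivity because fixed-point sets of isometries are totally geodesic of positive codimension, so a positive-dimensional group cannot fix a hypersurface pointwise). Thus $\mathrm{Isom}(S)$ has dimension $\ge\dim G\ge 1$. If $\dim G=1$ the action is free, but $S^1$ acts freely on no surface of nonzero Euler characteristic; and $\mathrm{Isom}(S^2)$, $\mathrm{Isom}(\mathbb{R}P^2)$ have identity component $SO(3)$ (metric on $S$ round) or $SO(2)$ (otherwise), while $SO(3)$ has no connected subgroup of dimension $2$ and none of dimension $3$ other than itself. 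Running through the cases forces $\dim G=3$, the induced metric on $S$ round, and $G^\circ\cong SO(3)$ acting on $S$ standardly. Consequently $(M,\cdot)$ is rotationally symmetric in the sense of the paper: $M$ is one of $S^3,\mathbb{R}P^3,\mathbb{R}P^3\#\mathbb{R}P^3,S^2\times S^1$ with its $SO(3)$-structure, and every $G$-invariant metric --- in particular every $g(t)$ --- is a warped product $dr^2+\phi(r,t)^2 g_{S^2}$. So either $M$ does not carry this $SO(3)$-structure (equivalently $G^\circ\not\cong SO(3)$), in which case no $g(t)$ ever contains a stable immersed sphere with embedded image and we are done with room to spare; or we are in the warped-product case, which is the remaining content.

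\emph{Warped-product case and the main obstacle.} In the warped product a $G^\circ$-invariant minimal sphere is a slice $\{r_0\}\times S^2$ with $\phi'(r_0,t)=0$ --- such slices are totally umbilic, hence totally geodesic where $\phi'=0$ --- and its stability operator is $\Delta_{\phi(r_0,t)^2 g_{S^2}}-2\phi''(r_0,t)/\phi(r_0,t)$, whose lowest eigenvalue is $2\phi''(r_0,t)/\phi(r_0,t)$; thus the slice is stable exactly when $\phi''(r_0,t)\ge 0$ (the $\mathbb{R}P^2$ caps are handled identically via their double covers). Combined with the rigidity step, $(M,g(t))$ contains a stable immersed sphere with embedded image if and only if $\phi(\cdot,t)$ has a critical point with $\phi''\ge 0$, i.e.\ a non-strict local minimum or a degenerate critical point. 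The hypothesis at $t=0$ says every critical point of $\phi(\cdot,0)$ is a strict local maximum; since between two strict local maxima there lies a local minimum, $\phi(\cdot,0)$ then has at most one critical point, which is impossible on $S^2\times S^1$ ($\phi$ periodic) and on $\mathbb{R}P^3\#\mathbb{R}P^3$ ($\phi'$ vanishing at both ends), leaving only $S^3$ and $\mathbb{R}P^3$ with $\phi(\cdot,0)$ ``unimodal''. To finish I would invoke the Sturmian (Angenent zero-counting) argument for rotationally symmetric Ricci flow, cf.\ \cite{AngenentKnopf}: in a fixed parametrization the arclength derivative $v=\phi_s$ obeys a scalar linear parabolic equation along the flow, so the number of sign changes of $v(\cdot,t)$ --- equal to the number of critical points of $\phi(\cdot,t)$ --- is finite for $t>0$, non-increasing in $t$, and strictly decreasing whenever $v(\cdot,t)$ acquires a multiple zero; since topology forces at least one critical point of $\phi(\cdot,t)$ on $S^3$ and $\mathbb{R}P^3$ for all $t$, this count stays equal to $1$ and the critical point stays a non-degenerate maximum, so $\phi(\cdot,t)$ never acquires a critical point with $\phi''\ge 0$ and no stable immersed sphere with embedded image appears. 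The main obstacle is precisely this last step: establishing that $v=\phi_s$ solves a scalar linear parabolic equation to which Angenent's theorem applies (this is where rotational symmetry is genuinely used) and controlling the behavior of $v$ at the poles and $\mathbb{R}P^2$ caps so that the zero count and the topological lower bound on the number of critical points are legitimate; everything preceding it is algebraic and robust.
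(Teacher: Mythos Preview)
Your overall architecture matches the paper's: prove that any stable immersed sphere with embedded image must be $G^\circ$-invariant, deduce from this that either the free $S^1$ case is ruled out by Euler characteristic or $G^\circ\cong SO(3)$ and $(M,g(t))$ is rotationally symmetric, and then hand the rotationally symmetric case to Angenent's Sturmian zero-count for the profile. The reduction step and the warped-product endgame are essentially identical to the paper's, which simply cites \cite{Angenentnodal} for the last part.

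There is, however, a genuine gap in your rigidity step. The orbit argument --- ``the closed orbit of $X$ through a point of $S$ must recross $S$ from the positive to the negative side'' --- tacitly assumes that $S$ separates $M$. It fails otherwise: on $S^2\times S^1$ with the product metric, $S=S^2\times\{0\}$ is a stable minimal sphere, $X=\partial_\theta$ is Killing, $u=\langle X,\nu\rangle\equiv 1>0$, and each $X$-orbit meets $S$ exactly once with positive sign, so no point with $u<0$ is produced. The paper's argument at this juncture is different and uses the time-$0$ hypothesis: if the Jacobi field $J_V$ is nowhere zero, then the isometric images $\phi_V(s)\cdot S$ are disjoint minimal spheres foliating one side of $S$ until the first contact with $S$ from the other side, at which point the maximum principle forces coincidence; by connectedness this makes $M\cong S^2\times S^1$, and such a manifold contains a stable embedded sphere in \emph{every} metric (minimize area in the nontrivial class of $\pi_2$), contradicting the assumption at $t=0$. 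Your argument is easily repaired in the same spirit: if $u>0$ on $S$ then every closed $X$-orbit meets $S$ with strictly positive algebraic intersection number, hence $[S]\neq 0$ in $H_2(M;\mathbb{Z})$, and area-minimization at time $0$ yields a stable sphere there --- contradiction; so one may assume $S$ separates, after which your transversal-recrossing argument is correct. Note that this is not a cosmetic point: the non-separating case is exactly where the initial hypothesis enters the rigidity step, and without it the step is false as stated.
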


\begin{proof}
Let $(M,g(t))$, $0\leq t \leq T$, $g(0)=g$, be a solution of the Ricci flow and suppose by contradiction that there is a stable immersed sphere with embedded image $S(t_1)$ in $(M,g(t_1))$. By uniqueness of the Ricci flow, $G$ still acts by isometries on $(M,g(t_1))$. For all $V\in\mathfrak{g}$ a vector in the Lie algebra of $G$, we define $\phi_V(s)$ the $1$-parameter family of diffeomorphisms of $G$ generated by the left-invariant vector field corresponding to $V$. Note that for any $V\in\mathfrak{g}$, the projection of
$$\frac{d(\phi_V(s).x)}{ds} \in T_x M \quad  x\in S(t_1)$$
on the normal bundle of $S(t_1)$ is a Jacobi field $J_V$. By stability either it is identically zero or it does not vanishes. Let us show that $J_V$ has to be zero. In the case where $S(t_1)$ is an $\mathbb{R}P^2$ it is clear since its normal bundle is non-trivial ($M$ is oriented). If $S(t_1)$ is an embedded sphere and $J_V\neq 0$ , then in a neighborhood of $S(t_1)$, $\{\phi_V(s).(S(t_1))\}_{s\in[0,s_0]}$ foliates one side of $S(t_1)$ as long as $\phi_V(s_0).(S(t_1)$ does not touch $\phi_V(0).(S(t_1)$ from the other side. When it does so at $s_0$, by minimality, the two surfaces coincide: $\phi_V(0).(S(t_1) = \phi_V(s_0).(S(t_1)$. Since such an $s_0>0$ exists in the case where $J_v$ is not identically zero, we deduce by connectedness and orientability of $M$ that $M$ is an $S^2 \times S^1$, a contradiction since for topological reasons it always contains a stable sphere. 

We just proved that for all $x\in S(t_1)$, the vector $X=\frac{d(\phi_V(s).x)}{ds}$ is tangent to the sphere $S(t_1)$ for all $V\in\mathfrak{g}$. It means that $G$ acts on $S(t_1)$. Since any compact $1$-dimensional group of isometries acting on a $2$-sphere or $\mathbb{R}P^2$ fixes a point, $G$ is of dimension $d>1$ so Case (2) is proved. For Case (1), since $G$ is of dimension greater than $1$ and acts effectively by isometries on a $2$-sphere, the connected component $G_0$ containing ${\Id}$ is isomorphic to the rotation group $SO(3)$. In otherwords, $M$ is rotationally symmetric and then the non-appearance of stable spheres along the Ricci flow is reduced to an ODE argument. By \cite[Theorem A]{Angenentnodal}, stable spheres invariant under $G_0$ cannot appear if there were none at the beginning and we can check that any stable sphere, if it exists, is $G_0$-invariant. The assumption that a stable sphere appears is thus absurd. The situation for $\mathbb{R}P^2$ with stable oriented double cover is similar.

\end{proof}

\begin{remarque} \label{petiteremarque}
\begin{enumerate}
\item Note that the $3$-dimensional (twisted) hooks defined in Section \ref{appear1} have an effective $S^1$-action which is not free, so according to Theorem \ref{symmetry} these examples where stable spheres appear have in some sense a maximal amount of symmetry.  
\item A byproduct of the proof of Theorem \ref{symmetry} is that if $(M,g)$ (as above) contains a stable immersed sphere with embedded image $S$ and if $(M,g)$ is not rotationally symmetric, then it is an $S^2\times S^1$ foliated by stable spheres which are images of $S$ under a family of isometries.

\end{enumerate}
\end{remarque}

\begin{lemme} \label{gluck}
Let $(M,g)$ be as above. If a Type II singularity occurs then $(M,g)$ is a rotationally symmetric sphere or $\mathbb{R}P^3$.
\end{lemme}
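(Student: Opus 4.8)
The plan is to argue by contradiction, combining the classification of Type II singularities via canonical neighborhoods with the symmetry conclusions of Theorem \ref{symmetry}. Suppose a Type II singularity occurs at time $T$ along the Ricci flow starting at $(M,g)$. By the geometric characterization of Type II singularities recalled from \cite{YDing} (corresponding to $iv)$ in \cite[Proposition 1.4]{YDing}), there is a sequence $(x_k,t_k)$ with $t_k\to T$, $R(x_k,t_k)\to\infty$, such that $(x_k,t_k)$ lies in a $(C,\epsilon)$-cap diffeomorphic to a $3$-ball. Near such a cap, after rescaling by $R(x_k,t_k)$ and using the canonical neighborhood theorem (Theorem \ref{canoneighbo}) together with the "bounded curvature at bounded distance" property, one finds that $(M,g(t_k))$ contains a small embedded sphere (the central sphere of the neck attached to the cap, or a slight perturbation of $\partial \bar Y$) whose area goes to zero; minimizing its area on the ball side via $\gamma$-reduction (as in the proof of Theorem \ref{spherestypeI}) produces a stable immersed sphere with embedded image $S(t_k)$ with area tending to $0$, for $k$ large.

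Now I would invoke Theorem \ref{symmetry} (and Remark \ref{petiteremarque}(2)): since $(M,g)$ satisfies the standing symmetry hypotheses, the existence at some later time $t_k$ of a stable immersed sphere with embedded image forces one of two alternatives. Either $(M,g)$ is rotationally symmetric --- i.e. diffeomorphic to $S^3$, $\mathbb{R}P^3$, $\mathbb{R}P^3\#\mathbb{R}P^3$, or $S^2\times S^1$ --- or, by Remark \ref{petiteremarque}(2), $M$ is an $S^2\times S^1$ foliated by stable spheres. In either non-($S^3$/$\mathbb{R}P^3$) case the manifold carries an embedded, two-sided, incompressible stable sphere. But then Corollary \ref{cocoo} (or rather its underlying mechanism) tells us that only Type I singularities can occur; more directly, for $M=S^2\times S^1$, $\mathbb{R}P^3\#\mathbb{R}P^3$ the flow can only run into a Type I neckpinch since at the singular time $M$ is covered by $\epsilon$-necks and $(C,\epsilon)$-caps diffeomorphic to $\mathbb{R}P^3$ minus a point (no $3$-ball caps), contradicting the Type II hypothesis. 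Hence $M$ must be a rotationally symmetric $S^3$ or $\mathbb{R}P^3$.

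The main obstacle I anticipate is ruling out the two "non-simply-connected rotationally symmetric" cases $S^2\times S^1$ and $\mathbb{R}P^3\#\mathbb{R}P^3$: one must show that their Ricci flow singularities are never Type II. This should follow from the fact that for these topologies the only canonical neighborhoods with large curvature are strong $\epsilon$-necks and $(C,\epsilon)$-caps diffeomorphic to $\mathbb{R}P^3$ minus a point (which have a strong $\epsilon$-neck double cover), so that a $(C,\epsilon)$-cap diffeomorphic to a $3$-ball --- the signature of Type II --- cannot appear; this uses \cite[Proposition A.21]{MorganTian} exactly as in the proof of Theorem \ref{spherestypeI}. A secondary point requiring care is that the rotational symmetry of $(M,g(0))$ is preserved under the flow (by uniqueness of the Ricci flow, as in Theorem \ref{symmetry}), so that the reduction to the $SO(3)$-equivariant ODE analysis of \cite{AngenentKnopf}, \cite{Angenentnodal} is legitimate; on $\mathbb{R}P^3$ and $S^3$ this equivariant flow is exactly the rotationally symmetric flow, whose singularities are classified and, when Type II, correspond to degenerate neckpinches --- consistent with the statement.
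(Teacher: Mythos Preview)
Your approach is substantially different from the paper's, and it contains a genuine gap.

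The paper's argument is direct and does not pass through stable spheres at all. It observes that the identity component $G_0$ of the isometry group must act on the $3$-ball $(C,\epsilon)$-cap (since the flow preserves isometries and the cap is intrinsically defined), and then invokes a linearization result for compact group actions on $3$-balls \cite[Lemma 14.3.11, Proposition 14.3.12]{MorganTianII}: such an action is conjugate to a linear one and hence has a fixed point. This immediately eliminates the free $S^1$ case, forces $d>1$, and since a compact connected Lie group of dimension $>1$ acting linearly and effectively on $\mathbb{R}^3$ contains $SO(3)$, the manifold is rotationally symmetric. The presence of a ball cap (rather than an $\mathbb{R}P^3$-minus-a-point cap) then pins down the topology as $S^3$ or $\mathbb{R}P^3$.

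Your route via stable spheres breaks at the step where you claim to produce a stable sphere ``minimizing its area on the ball side via $\gamma$-reduction.'' This is precisely what fails for a $3$-ball cap: the central sphere of the neck bounds a ball on that side and is therefore isotopically trivial there, so $\gamma$-reduction shrinks it to a point rather than producing a stable minimal surface. Note that the proof of Theorem~\ref{spherestypeI} explicitly relies on the central sphere being homologically nontrivial in the tube $T(t)$, which is exactly what is lost when a $3$-ball cap is present --- this is why that theorem is stated only for non-trivial Type I singularities. Minimizing on the other side, or in a sub-tube, does not obviously help either: near a Bryant-soliton-like cap the profile of sphere areas can be monotone from the tip outward, so the infimum is pushed to the boundary and no interior stable sphere is guaranteed.

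A second issue is logical circularity: you invoke Corollary~\ref{cocoo} (``or rather its underlying mechanism''), but in the paper that corollary is proved \emph{using} Lemma~\ref{gluck}. Your proposed replacement --- that on $S^2\times S^1$ or $\mathbb{R}P^3\#\mathbb{R}P^3$ the manifold is covered by necks and $\mathbb{R}P^3$-minus-a-point caps near the singularity --- only holds when the singularity is trivial; for a nontrivial singularity there is no topological obstruction to an embedded $3$-ball cap appearing in these manifolds, so the exclusion of Type II there needs a further argument.
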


\begin{proof}
Let $t_1$ be the time of a Type II singularity. By \cite{YDing}, just before $t_1$, there is a region of high scalar curvature which is a $(C,\epsilon)$-cap diffeomorphic to a $3$-ball. By \cite[Lemma 14.3.11, Proposition 14.3.12]{MorganTianII}, the action of $H$ is equivariant to a linear action and there is a fixed point. Consequently, $H$ cannot be $1$-dimensional by our assumption on $G$ and $M$ is a rotationally symmetric sphere or $\mathbb{R}P^3$. 
\end{proof}

Because of the link between Type I singularities and stable spheres described in Section \ref{type I}, we readily obtain the following corollary. 

\begin{coro}  \label{cocoo}
Let $(M,g)$ be as above. The following holds along the Ricci flow.
\begin{enumerate}
\item When $M$ is a rotationally symmetric $3$-sphere and does not contain stable spheres, then no non-trivial Type I singularity occurs. 
\item When $M$ is a rotationally symmetric $\mathbb{R}P^3$ and does not contain stable immersed spheres with embedded image, then no non-trivial Type I singularity occurs. 
\item When $M$ is rotationally symmetric and neither a $3$-sphere nor an $\mathbb{R}P^3$, no Type II singularity occurs.
\item When $M$ is not rotationally symmetric and if a singularity occurs, then it is a Type I trivial singularity. 
\end{enumerate}

\end{coro}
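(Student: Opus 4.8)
The plan is to derive all four assertions from results already established in the paper, handling the cases essentially independently. I will use throughout that, by uniqueness of the Ricci flow, the Lie group $G$ keeps acting by isometries on $(M,g(t))$ for every $t$; that, $G$ being fixed, the property of being rotationally symmetric is preserved along the flow; and that every Ricci flow singularity is of Type~I or of Type~II (the negation of Type~II being exactly the existence of a bound $\sup_M|Rm|\le \bar C/(T-t)$ near $T$, hence on all of $[0,T)$).

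For items (1) and (2) I would argue by contradiction. Suppose $(M,g)$ is a rotationally symmetric $S^3$ (resp.\ $\mathbb{R}P^3$) containing no stable immersed sphere with embedded image; on $S^3$ this is the same as containing no stable sphere, since $\mathbb{R}P^2$ does not embed in $S^3$. If a non-trivial Type~I singularity occurred at a time $T$, then the first half of Theorem~\ref{spherestypeI} would produce, for every $t$ close to $T$, a stable immersed sphere with embedded image in $(M,g(t))$; this contradicts Theorem~\ref{symmetry}, which forbids such surfaces from appearing along the flow when there were none at $t=0$. Item (3) would be immediate from Lemma~\ref{gluck}: if $(M,g)$ is rotationally symmetric but diffeomorphic to neither $S^3$ nor $\mathbb{R}P^3$ (hence to $\mathbb{R}P^3\#\mathbb{R}P^3$ or $S^2\times S^1$), a Type~II singularity would force $(M,g)$ to be a rotationally symmetric $S^3$ or $\mathbb{R}P^3$, which is absurd.

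Item (4) is the one that requires an idea, and the step I expect to be the main obstacle. Assuming $M$ is not rotationally symmetric and that a singularity occurs at $T$, Lemma~\ref{gluck} rules out Type~II, so the singularity is of Type~I, and it remains only to show it is trivial. I would suppose it is non-trivial and seek a contradiction. By the first part of Theorem~\ref{spherestypeI}, for every $t$ close to $T$ the slice $(M,g(t))$ contains a stable immersed sphere with embedded image $S(t)$ with $\mathcal{H}^2(S(t))\to 0$ as $t\to T$. Since $(M,g(t))$ still satisfies the standing hypotheses of this section and is still not rotationally symmetric, Remark~\ref{petiteremarque}(2) applies and gives that $(M,g(t))$ is isometric to $S^2\times S^1$ foliated by stable minimal spheres, each an isometric image of $S(t)$. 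Then $R(\cdot,t)$ attains on every leaf of this foliation the same infimum as on $S(t)$, so $\min_M R(\cdot,t)\ge\min_{S(t)}R(\cdot,t)$, which tends to $+\infty$ by Remark~\ref{minimum de R}. Hence $R(\cdot,t)\to\infty$ uniformly on $M$, and since in dimension $3$ one has $|Rm|\ge\tfrac{1}{6}R$ wherever $R\ge 0$ (or, with room to spare, by the Hamilton--Ivey pinching estimate), $|Rm(\cdot,t)|\to\infty$ uniformly on $M$ as well. This says exactly that the singularity is trivial, contrary to assumption; therefore it is a Type~I trivial singularity, which proves (4).

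I expect the only genuinely delicate point to be the use of Remark~\ref{petiteremarque}(2) inside item (4): one has to be confident that ``not rotationally symmetric'' is preserved along the flow (it is, because $G$ is fixed) and then exploit the resulting foliation by mutually isometric stable spheres to promote the purely local neckpinch picture imposed by a non-trivial Type~I singularity into a \emph{uniform} curvature blow-up on all of $M$, which is precisely what is incompatible with non-triviality. Everything else merely repackages Theorems~\ref{symmetry}, \ref{spherestypeI} and Lemma~\ref{gluck}.
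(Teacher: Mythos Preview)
Your proposal is correct and follows essentially the same route as the paper: items (1)--(2) by combining Theorem~\ref{symmetry} with Theorem~\ref{spherestypeI}, item (3) directly from Lemma~\ref{gluck}, and item (4) by ruling out Type~II via Lemma~\ref{gluck}, then invoking Remark~\ref{petiteremarque}(2) to get the $S^2\times S^1$ foliation by isometric stable spheres and Remark~\ref{minimum de R} to force uniform curvature blow-up. Your write-up is in fact more explicit than the paper's on why the foliation forces $\min_M R\to\infty$ and hence $|Rm|\to\infty$ everywhere, which is exactly the step the paper compresses into the phrase ``the curvature blows up everywhere.''
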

\begin{proof}
The first item comes from Theorem \ref{spherestypeI} and Theorem \ref{symmetry}, the second item is proved in the same way considering a double cover. Lemma \ref{gluck} yields the third item. For the fourth item, a singularity must be of Type I by Lemma \ref{gluck} and \cite{YDing}. Let $T$ be a time of singularity. Suppose that the singularity is non-trivial, then by Theorem \ref{spherestypeI} and Remark \ref{petiteremarque} (2), $(M,g(t))$ is an $S^2\times S^1$ foliated by small spheres for all $t$ close to $T$. The curvature blows up everywhere in that case (see Remark \ref{minimum de R}), contradicting our assumption and the corollary is verified.  
\end{proof}

A question still left unanswered is whether a Type II singularity can appear in the case of item $(1)$. In item $(3)$, the other kinds of singularities can occur. The first item was proved in \cite{GuZhu} for all dimensions. In the case of a free $S^1$ action, it was already suggested in \cite[Remark 2.6]{LottSesum} to combine the singularity analysis with the symmetry.

\bibliographystyle{plain}
\bibliography{biblio17_04_04}

\end{document}